\numberwithin{equation}{section}
\newtheorem{Theo}{Theorem}[section]
\newtheorem{lem}[Theo]{Lemma}
\newtheorem{cor}[Theo]{Corollary}
\newtheorem{prop}[Theo]{Proposition}
\theoremstyle{plain}
\theoremstyle{definition}
\newtheorem{defi}[Theo]{Definition}
\theoremstyle{remark}
\newtheorem*{rema*}{Remark}
\newcommand{\ZZ}{\mathbb{Z}}
\newcommand{\NN}{\mathbb{N}}
\newcommand{\RR}{\mathbb{R}}
\newcommand{\EE}{\varepsilon}
\newcommand{\Lip}{\textnormal{Lip}}
\newcommand{\Div}{\textnormal{div}}
\newcommand{\DD}{\textnormal{D}}
\newcommand{\loc}{\textnormal{loc}}
\newcommand{\supp}{\textnormal{supp }}
\author[H. Meddour]{Halima Meddour}
\address{LEDPA, Universit\'e de Batna --2--\\ Facult\'e des Math\'ematiques et Informatique\\ D\'epartement de Math\'ematiques\\ 05000 Batna Alg\'erie}
\email{h.meddour@univ-batna2.dz}
\author[M. Zerguine]{Mohamed Zerguine}
\address{LEDPA, Universit\'e de Batna --2--\\ Facult\'e des Math\'ematiques et Informatique\\ D\'epartement de Math\'ematiques\\ 05000 Batna Alg\'erie}
\email{m.zerguine@univ-batna2.dz}
\email{mohamed\_zerguine@yahoo.fr}
\keywords{$2d$-Stratified Boussinesq system, Regular vortex patches, Rate of convergence, Global well-posedness, Optimal rate}
\subjclass[2000]{35B65, 35Q35, 76D05}
\date{}
\begin{document}

\title[Inviscid limit]
{Optimal rate of convergence in Stratified Boussinesq system}
\maketitle
\begin{abstract}
We study the  vortex patch problem for $2d-$stratified Navier-Stokes system.  We aim at extending several results obtained in \cite{ad,danchinpoche,hmidipoche} for standard Euler and Navier-Stokes systems. We shall deal with smooth initial patches and establish global strong estimates uniformly with respect to the viscosity in the spirit of \cite{HZ-poche, Z-poche}. This allows to prove the convergence of the viscous solutions towards the inviscid one. In the setting of a Rankine vortex, we show that the rate of convergence for the vortices is optimal  in $L^p$ space  and is given by $(\mu t)^{\frac{1}{2p}}$. This generalizes the result of \cite{ad} obtained for $L^2$ space.
\end{abstract}
\tableofcontents
\section{Introduction} 
\quad\quad This paper is mainly motivated by the analysis of the initial value problem for the stratified Navier-Stokes system. This system of partial differential equations governs the evolution of a viscous incompressible fluid like the atmosphere and the ocean where one should take into account  the  friction forces and the stratification under the Boussinesq approximation, see \cite{Ped}. The state of the fluid is described by a triplet $(v_\mu,p_\mu,\rho_\mu)$ where  $v_{\mu}(t,x)$ denotes the velocity field which is assumed to be incompressible and the  thermodynamical variables $p_{\mu}(t,x)$ and $\rho_{\mu}(t,x)$ which are two scalar functions representing respectively  the pressure and the density. The equations being solved take the form
\begin{equation}
\left\{ \begin{array}{ll} 
\partial_{t}v_{\mu}+v_{\mu}\cdot\nabla v_{\mu}-\mu\Delta v_{\mu}+\nabla p_{\mu}=\rho_{\mu} \vec e_2 & \textrm{if $(t,x)\in \RR_+\times\RR^2$,}\\
\partial_{t}\rho_{\mu}+v_{\mu}\cdot\nabla \rho_{\mu}-\kappa\Delta \rho_{\mu}=0 & \textrm{if $(t,x)\in \RR_+\times\RR^2$,}\\ 
\Div v_{\mu}=0, &\\ 
({v}_{\mu},{\rho}_{\mu})_{| t=0}=({v}^0_{\mu},{\rho}^0_{\mu}).  
\end{array} \right.\tag{B$_{\mu,\kappa}$}
\end{equation} 
 The two coefficients $\mu, \kappa$ stand respectively for the kinematic viscosity and molecular diffusivity and $\vec e_2=(0,1)$. For a better understanding of the system (B$_{\mu,\kappa}$) it is more convenient to write it  using the  vorticity-density formulation. Thus  the vorticity $\omega\triangleq\partial_1 v^2-\partial_2 v^1$ and the density satisfy the equivalent system,
\begin{equation}
\left\{ \begin{array}{ll} 
\partial_{t}\omega_{\mu}+v_{\mu}\cdot\nabla\omega_{\mu}-\mu\Delta\omega_{\mu}=\partial_1\rho_{\mu} & \textrm{if $(t,x)\in \RR_+\times\RR^2$,}\\
\partial_{t}\rho_{\mu}+v_{\mu}\cdot\nabla\rho_{\mu}-\kappa\Delta\rho_{\mu}=0& \textrm{if $(t,x)\in \RR_+\times\RR^2$,}\\
v_{\mu}=\nabla^\perp\Delta^{-1}\omega_{\mu},\\
(\rho_{\mu},\omega_{\mu})_{| t=0}=(\rho_{\mu}^0,\omega_{\mu}^0).  
\end{array} \right. \tag{VD$_{\mu,\kappa}$}
\end{equation}
It is clear that (B$_{\mu,\kappa}$) coincides with the classical incompressible Navier-Stokes system when the initial density $\rho_{\mu}^0$ is identically constant. For a general  review on the mathematical theory of the Navier-Stokes system we refer  for instance to  \cite{HCD, LemarBook}.  We notice that the system  (B$_{\mu,\kappa}$) is the subject of intensive research activities especially in the last decades.  A lot of results have been obtained and we shall restrict the discussion to some of them. When the coefficients  $\mu$ and $\kappa$ are strictly positive,  it was proved in \cite{CanDib, Guo} that the system (B$_{\mu,\kappa}$) admits a unique global solution  for arbitrarily large data. For $\mu>0, \kappa=0$ the global well-posedness problem was solved independently by Hou and Li  \cite{HL} and Chae \cite{Chae}  for  smooth initial data in Sobolev spaces $H^s, s>2$. Those results were improved by  Abidi and Hmidi in  \cite{ah}  for  $(v^0, \rho^0)\in B^{-1}_{\infty,1}\cap L^2\times B^{0}_{2,1}$. Later, Danchin and Paicu investigated in \cite{dp} the global well-posedness for any initial data $(v^0, \rho^0)$ in $L^2\times L^2$. The opposite case $\mu=0$  and $\kappa>0$ is also well-explored.  Actually, Chae  proved  in \cite{Chae} the global well-posedness for $(v^0, \rho^0)\in H^s\times H^s$ for $s>2$ which was later improved  by Hmidi and Keraani in  \cite{hk1} for critical Besov spaces, that is, $(v^0,\rho^0)\in B^{\frac2p+1}_{p,1}\times B^{-1+\frac{2}{p}}_{p,1}\cap L^r,\;r>2$. The global existence in  the framework of Yudovich solutions was accomplished in \cite{DP}  by Danchin and Paicu for  $(v^0, \rho^0)\in L^2\times L^2\cap B_{\infty, 1}^{-1}$ and $\omega^0\in L^r\cap L^\infty$ with $r\ge 2$. For other connected topics we refer the reader  to  \cite{hk1, HKR1, HKR2, HZ, Tit, MX, ES}.

\quad\quad The main focus of the current paper is twofold. In the first part, we study the persistence regularity of  the vortex patches for (B$_{\mu,\kappa}$) for $\kappa=1$,  denoted  simply by (B$_{\mu}$). In the second part we shall deal with the strong convergence towards the limit system when the viscosity $\mu$ goes to zero. The  limit system  is nothing but the stratified Euler equations,
\begin{equation}
\left\{ \begin{array}{ll} 
\partial_{t}v+v\cdot\nabla v+\nabla p=\rho \vec e_2 & \textrm{if $(t,x)\in \RR_+\times\RR^2$,}\\
\partial_{t}\rho+v\cdot\nabla \rho-\Delta \rho=0 & \textrm{if $(t,x)\in \RR_+\times\RR^2$,}\\ 
\Div v=0, &\\ 
({v},{\rho})_{| t=0}=(v^{0},\rho^0).  
\end{array} \right. \tag{B$_{0}$}
\end{equation} 
Before giving more details about our main contribution  we shall review some aspects of the   vortex patch problem for the viscous/inviscid incompressible fluid. Recall first the classical  Navier-Stokes equations,
\begin{equation}
\left\{ \begin{array}{ll} 
\partial_{t}v+v\cdot\nabla v-\mu\Delta v+\nabla p=0 & \textrm{if $(t,x)\in \RR_+\times\RR^2$,}\\
\Div v=0, &\\ 
{v}_{| t=0}=v^{0}.\tag{NS$_\mu$}  
\end{array} \right.
\end{equation}   
Notice  that the incompressible Euler system (E), denoted sometimes by  (NS$_0$), is given by
\begin{equation}
\left\{ \begin{array}{ll} 
\partial_{t}v+v\cdot\nabla v+\nabla p=0 & \textrm{if $(t,x)\in \RR_+\times\RR^2$,}\\
\Div v=0, &\\ 
{v}_{| t=0}=v^{0}.\tag{E}  
\end{array} \right.
\end{equation} 
We point out that the global existence of classical solutions  for Euler system is based on  the  structure of the vorticity which is transported by the flow, that is, 
\begin{equation*} 
\partial_{t}\omega+v\cdot\nabla\omega=0.
\end{equation*}   
This provides infinite family of conservation laws and in particular we get for all $p\in[1,\infty]$
\begin{equation}\label{conservation}
\|\omega(t)\|_{L^p}=\|\omega^0\|_{L^p}.
\end{equation}
We mention that  the conservation laws \eqref{conservation} served as a suitable framework for Yudovich  \cite{Yd}  to relax the classical hyperbolic theory and show that (NS$_{\mu}$) and (E) are globally well-posed whenever $\omega^0\in L^1\cap L^\infty$. In this pattern, the velocity is no longer in the Lipschitz class but belongs to the $\log-$Lipschitz space, denoted by $LL$\footnote{The space $LL$ is the set of bounded functions $u$ such that $$\| u\|_{LL}\triangleq\sup_{0<\vert x-y\vert<1}\frac{\vert u(x)-u(y)\vert}{\vert x-y\vert\log\frac{e}{\vert x-y\vert}}.$$}. It is known that with this regularity the associated flow $\Psi$ is continuous with respect to $(t,x)-$variables and the vorticity can be recovered from its initial value according to the formula,
\begin{equation}
\omega(t,\Psi(t,x))=\omega^0(x).
\end{equation}
In particular, when the initial vorticity $\omega^0={\bf 1}_{\Omega_0}$ is a vortex patch with $\Omega_0$ being  a regular bounded domain, then the advected vorticity remains a vortex patch relative to a domain $\Omega_t\triangleq\Psi(t,\Omega_0)$ which is homeomorphic to $\Omega_0$.  It is important to emphasize that the regularity persistence of the boundary does not follow from the general theory of Yudovich because the flow is not in general better than $C^{e^{-\alpha t}}$ where $\alpha$ depends on $\omega^0$.  This problem was solved by  Chemin who proved in  \cite{che1} that when the initial boundary is  $C^{1+\EE}$ then the boundary of the  patch keeps this regularity through the time. Broadly speaking, Chemin's strategy  is entirely based on the control of Lipschitz norm of the velocity by means of logarithmic estimate of $\Vert\omega\Vert_{C^{\EE}(X)}$ with $C^{\EE}(X)$ is an  anisotropic H\"older space associated to an adequate family of vector fields that capture the conormal regularity of the velocity(see section \ref{vortex-patch}).

\quad\quad The study for the viscous case was initiated by Danchin in \cite{danchinpoche} who proved that if $\omega^0={\bf 1}_{\Omega_0}$, such that the domain  $\Omega_0$ is  $C^{1+\EE}$ then the velocity $v_{\mu}$ is Lipschitz uniformly with respect to the viscosity $\mu$.  He also showed that the transported vorticity by the viscous flow $\Psi_{\mu}$ remains in the class $C^{1+\varepsilon^\prime}, \forall \varepsilon^\prime<\varepsilon.$  Note that contrary to the H\"olderian regularity, there is no  loss of regularity in the  Besov spaces  $B_{p,\infty}^{\EE}, \; \forall \, p<\infty$. For the borderline case $p=\infty$  Hmidi showed in \cite{hmidipoche} that this loss of regularity is artificial and his proof is mainly related  to  some smoothing effects for the  transport-diffusion equation using Lagrangian coordinates.  There is  a large literature dealing with this subject and some connected topics and for more details we refer the reader to the papers \cite{BC,Dw,Fa, GP,hmidipoche} and the references therein.

\quad\quad It could be interesting to extend some of the foregoing results to the stratified Navier-Stokes system (B$_{\mu}$). The investigation of this system with initial vorticity of  patch type  has been started recently in \cite{HZ-poche} for $\mu=0$. It was proved that   if the boundary of the  initial patch is smooth enough   then the velocity is Lipschitz for any positive time and the transported domain $\Omega_t$ preserves its initial regularity. In addition, the vorticity can be decomposed into a singular part which is a vortex patch term and a regular part, which is deeply related to the smoothing effect for density, i.e. $\omega(t)={\bf 1}_{\Omega_t}+\widetilde{\rho}(t)$. Later, the second author studied in \cite{Z-poche} the same system but the usual dissipation operator  $-\Delta$ is replaced by the critical fractional Laplacian  $(-\Delta)^{\frac12}$. He  obtained  sharper results compared to the incompressible Euler equations \cite{che1,HZ-poche} and describe the asymptotic behavior of the solutions for large time.\\
We are now ready to state the first main result, dealing with the global well-posedness for the system (B$_{\mu}$) under a vortex patch initial data. More precisely, we have:
\begin{Theo}\label{pochetheo} 
Let $\Omega_0$ be a simply connected bounded domain such that its  boundary $\partial\Omega_0$ \mbox{is  $C^{1+\EE}$} with $0<\EE<1$. Let  $\omega_{\mu}^{0}={\bf 1}_{\Omega_0}$ and  $\rho^0_{\mu}\in L^1\cap L^{\infty}$ then the following assertions hold.
\begin{enumerate}
\item[\textnormal{(i)}] The system (B$_{\mu}$) admits a unique global solution $(v_{\mu}, \rho_{\mu})$ such that
$$
(v_{\mu}, \rho_{\mu})\in L^{\infty}_{loc}(\RR_+; \Lip)\times L^{\infty}_{loc}(\RR_+; L^1\cap L^\infty).
$$  

\end{enumerate}
More precisely, there exists $C_0\triangleq C(\EE, \Omega_0)>0$ such that, for all $\mu\in]0,1[$ and for all $t\in\RR_+$ we have
\begin{equation}\label{H}
\Vert\nabla v_{\mu}(t)\Vert_{L^\infty}\le C_0e^{C_0 t\log^2(1+t)}.
\end{equation}
\begin{enumerate}
\item[\textnormal{(ii)}] The boundary of the transported domain $\Omega_{\mu}(t)\triangleq\Psi_{\mu}(t, \Omega_0)$ is  $C^{1+\EE}$ for every $t\ge 0$ uniformly on $\mu$, where $\Psi_{\mu}$ denotes the viscous flow associated to $v_{\mu}$. 
\end{enumerate}
\end{Theo}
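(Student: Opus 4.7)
My plan is to adapt the striation approach of Chemin \cite{che1}, as revisited by Danchin \cite{danchinpoche} and Hmidi \cite{hmidipoche}, to the present viscous Boussinesq setting, the new ingredient being the handling of the density source $\partial_1\rho_\mu$ in the vorticity equation of (VD$_{\mu,1}$). The starting point is the existence of a global Yudovich-type solution: since $\omega_\mu^0=\mathbf{1}_{\Omega_0}\in L^1\cap L^\infty$ and $\rho_\mu^0\in L^1\cap L^\infty$, a Danchin--Paicu type result (see also \cite{HZ-poche} for $\mu=0$) provides a unique global solution with $\rho_\mu\in L^\infty_{\loc}(\RR_+;L^1\cap L^\infty)$, $\omega_\mu\in L^\infty_{\loc}(\RR_+;L^1\cap L^\infty)$ and $v_\mu\in L^\infty_{\loc}(\RR_+;LL)$, all uniformly in $\mu\in]0,1[$. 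Parabolic smoothing for the density equation (with $\kappa=1$), combined with the transport estimate, gives a control of $\partial_1\rho_\mu$ in Besov or H\"older norms; this will ultimately play the role of a lower-order perturbation in the striation analysis.

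\textbf{Decomposition of the vorticity.} Following \cite{HZ-poche}, I would split $\omega_\mu=\omega_\mu^{(1)}+\omega_\mu^{(2)}$, where
\begin{equation*}
\partial_t\omega_\mu^{(1)}+v_\mu\cdot\nabla\omega_\mu^{(1)}-\mu\lap\omega_\mu^{(1)}=0,\qquad \omega_\mu^{(1)}|_{t=0}=\mathbf{1}_{\Omega_0},
\end{equation*}
and $\omega_\mu^{(2)}$ solves the same transport--diffusion equation with zero datum and source $\partial_1\rho_\mu$. Using Duhamel and the fact that the heat semigroup regularises $\partial_1\rho_\mu$ (which gains a full derivative with respect to $\rho_\mu\in L^\infty$), one shows that $\omega_\mu^{(2)}$ is uniformly bounded in a space like $B^{\EE}_{\infty,1}$ or $C^\EE$, independently of the patch structure. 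Thus the patch singularity is entirely carried by $\omega_\mu^{(1)}$, and the whole game is to propagate its conormal regularity along the boundary $\partial\Omega_\mu(t)$.

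\textbf{Tangential vector fields and Chemin's logarithmic estimate.} Next, I would choose an admissible family $(X_0^\lambda)_{\lambda\in\Lambda}$ of $C^\EE$ vector fields tangent to $\partial\Omega_0$ with $I(X_0)\triangleq\inf_\lambda\inf_x|X_0^\lambda(x)|>0$, and transport them by the viscous flow:
\begin{equation*}
\partial_t X^\lambda_t+v_\mu\cdot\nabla X^\lambda_t=X^\lambda_t\cdot\nabla v_\mu,\qquad X^\lambda_{t=0}=X_0^\lambda.
\end{equation*}
Chemin's logarithmic interpolation inequality then yields
\begin{equation*}
\|\nabla v_\mu(t)\|_{L^\infty}\le C\bigl(\|\omega_\mu(t)\|_{L^2\cap L^\infty}\bigr)\log\!\Bigl(e+\tfrac{\|\omega_\mu(t)\|_{C^\EE(X_t)}}{I(X_t)}\Bigr),
\end{equation*}
where $C^\EE(X_t)$ is the anisotropic H\"older space built on the family $X_t$. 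The norms $I(X_t)^{-1}$ and $\|X_t\|_{C^\EE}$ can be controlled by standard ODE estimates driven by $\|\nabla v_\mu\|_{L^\infty}$ as in \cite{che1,danchinpoche}.

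\textbf{Main obstacle and conclusion.} The main difficulty, and the truly new point compared with \cite{danchinpoche}, is to propagate $\|\omega_\mu^{(1)}\|_{C^\EE(X_t)}$ uniformly in $\mu$. Applying $X_t^\lambda$ to the transport--diffusion equation generates a commutator $\mu[X_t^\lambda\cdot\nabla,\lap]\omega_\mu^{(1)}$ which is singular as $\mu\to 0$; the idea, inspired by \cite{hmidipoche}, is to pass to Lagrangian coordinates (or to use a Littlewood--Paley block decomposition) in order to absorb this commutator into the parabolic smoothing, at the cost of a logarithmic loss that is harmless for the final bound. Together with the already controlled contribution of $\omega_\mu^{(2)}$, this will show that $\|\omega_\mu(t)\|_{C^\EE(X_t)}/I(X_t)$ grows at most double-exponentially in time. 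Plugging this into Chemin's logarithmic inequality and invoking an Osgood--Gronwall argument produces the uniform estimate \eqref{H}. Finally, (ii) follows by standard arguments: the uniform bound on $\|\nabla v_\mu\|_{L^\infty}$ ensures that the flow $\Psi_\mu(t,\cdot)$ is bi-Lipschitz and, combined with the propagation of $C^\EE(X_t)$ regularity, that $\partial\Omega_\mu(t)=\Psi_\mu(t,\partial\Omega_0)$ stays $C^{1+\EE}$ for all $t\ge 0$, uniformly with respect to $\mu$.
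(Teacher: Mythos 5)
Your strategy is sound and reaches the same conclusion, but it handles the one genuinely new term --- the density forcing $\partial_1\rho_\mu$ in the vorticity equation --- by a different mechanism than the paper. You superpose $\omega_\mu=\omega_\mu^{(1)}+\omega_\mu^{(2)}$ and never apply $\partial_{X_t}$ to the forced part: you only need $\omega_\mu^{(2)}\in C^{\EE}$, which you extract from the full parabolic smoothing of the density equation ($\kappa=1$ gives $\rho_\mu\in\widetilde L^1_tB^{2}_{\infty,\infty}$ up to a factor $e^{CV_\mu(t)}$, hence $\partial_1\rho_\mu\in L^1_tC^{\EE}$ after the $\widetilde L^1\to L^1$ embedding, hence $\omega_\mu^{(2)}\in L^\infty_{\loc}C^{\EE}$ by the transport--diffusion estimate with zero datum); the conormal bound $\partial_{X_t}\omega_\mu^{(2)}\in C^{\EE-1}$ then comes for free from the algebra inequality $\|\partial_Xu\|_{C^{\EE-1}}\lesssim\|u\|_{C^\EE}\widetilde\|X\|_{C^\EE}$. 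The paper instead diagonalizes: it introduces $\Gamma_\mu=(1-\mu)\omega_\mu-\partial_1\Delta^{-1}\rho_\mu$, so that the source becomes the zero-order commutator $[\partial_1\Delta^{-1},v_\mu\cdot\nabla]\rho_\mu$, whose $C^\EE$ norm is bounded by low-regularity norms of $v_\mu,\rho_\mu$ only (Proposition \ref{An1}); this avoids ever invoking two derivatives of $\rho_\mu$ and would survive in settings with weaker density smoothing, whereas your route leans on $\kappa=1$ and on constants of the form $e^{CV_\mu(t)}$ that must be fed back into the Gronwall loop (this is harmless here, since the paper's own estimate \eqref{V220} has the same structure). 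Both routes then face the identical core difficulty, the commutator $\mu[\Delta,X_{t,\lambda}]$ applied to the ``singular'' part ($\omega_\mu^{(1)}$ for you, $\Gamma_\mu$ for the paper), which you correctly defer to the $F+\mu G$ paraproduct/remainder decomposition of \cite{danchinpoche,hmidipoche}; to make your sketch complete you should still check that the maximal smoothing bound $\mu\|\omega_\mu^{(1)}\|_{\widetilde L^1_tB^2_{\infty,\infty}}\le Ce^{CV_\mu(t)}(1+\mu t)$ is uniform in $\mu$, record $I(X_t)\ge I(X_0)e^{-V_\mu(t)}$, and insert the logarithmic growth $\|\omega_\mu(t)\|_{L^\infty}\le C_0\log^2(1+t)$ into Chemin's inequality to produce the precise rate $C_0e^{C_0t\log^2(1+t)}$ of \eqref{H} rather than a plain exponential.
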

Let us give a bunch of comments about Theorem \ref{pochetheo} in the following few remarks.
\begin{rema*} Compared to the incompressible Navier-Stokes system, we see that a Lipschitz norm of the velocity has a logarithmic growth for large time. This is due to the logarithmic factor in the growth of the vorticity, namely we have:
\begin{equation*}
\|\omega_\mu(t)\|_{L^\infty}\le C_0\log^2(1+t).
\end{equation*}
\end{rema*}

\begin{rema*}
When the viscosity $\mu$ is identically zero, we obtain the same result as in \cite{HZ-poche} for the stratified Euler system (B$_0$), that is to say:
\begin{equation}\label{M}
\Vert\nabla v(t)\Vert_{L^\infty}\le C_0 e^{C_0 t\log^2(1+t)}. 
\end{equation}  
\end{rema*}
Now we shall briefly outline the ideas of the proof which is done in the spirit of the pioneering work of Chemin \cite{che1}. In order to get a bound for the quantity $\Vert\nabla v_{\mu}(t)\Vert_{L^\infty}$ we first show that the co-normal regularity of the vorticity $\partial_X\omega_{\mu}$ is controlled in $C^{\EE-1}$, with $0<\EE<1$. We then take advantage of the logarithmic estimate to derive the Lipschitz norm of the velocity, with $X$ is a family of selected  vector fields which satisfies the transport equation,
\begin{equation*}
\partial_{t}X+v_{\mu}\cdot\nabla X=X\cdot\nabla v_{\mu}.
\end{equation*}
As it was pointed in \cite{danchinpoche, hmidipoche} the situation in the viscous case is more delicate than the inviscid one due to  the Laplacian operator does not commute with the family $X$. Actually,   the evolution of the directional derivative $\partial_{X}\omega_\mu$ is governed by an inhomogeneous transport-diffusion equation,
 \begin{equation}\label{eq2v}
  (\partial_{t}+v_{\mu}\cdot\nabla-\mu\Delta)\partial_X\omega_\mu=-\mu[\Delta, X]\omega_\mu+\partial_X\partial_1\rho_{\mu},
   \end{equation}
where $[\Delta, X]$ denotes the commutator between $\Delta$ and $X$. Thus the difficulties reduce to understanding   the  terms $[\Delta, X]\omega_\mu$ and $\partial_X\partial_1\rho_{\mu}$ which apparently need more regularity to be well-defined than what is initially prescribed. To circumvent the problem for the first term we shall use the formalism developed in \cite{danchinpoche,hmidipoche} for $2d-$incompressible Navier-Stokes system. However  to deal with the second term we find more convenient to diagonalize the system written in the vorticity-density formulation and   introduce  the coupled function $\Gamma_{\mu}\triangleq(1-\mu)\omega_{\mu}-\partial_1\Delta^{-1}\rho_{\mu}$ in the spirit of \cite{HR}. This function satisfies the following transport-diffusion equation,
$$
\partial_t\Gamma_{\mu}+v_{\mu}\cdot\nabla\Gamma_{\mu}-\mu\Delta\Gamma_{\mu}=[\partial_1\Delta^{-1}, v_{\mu}\cdot\nabla]\rho_{\mu}\triangleq H_{\mu}.
$$
By applying the directional derivative $\partial_X$ to the last equation we find
\begin{equation*}
(\partial_t +v_{\mu}\cdot\nabla-\mu\Delta) \partial_{X}\Gamma_{\mu}=-\mu[\Delta, X]\Gamma_{\mu}+\partial_X H_{\mu}.
\end{equation*}
At a formal level, and this will be justified rigorously as we shall see in the proofs, we see that $H_{\mu}$ is of order zero with respect to $\rho_{\mu}$ according to the smoothing effect of the singular operator $\partial_1\Delta^{-1}.$ Thus instead of manipulating $\partial_X\partial_1\rho_{\mu}$ in the equation \eqref{eq2v} which consumes two derivatives  we need just to understand $\partial_X H_\mu$  which  exhibits a good behavior on $\rho_{\mu}$ as it was revealed in \cite{HZ-poche}.\\

\quad\quad The second part of this paper is devoted to  the inviscid limit problem which is in fact well-explored for the classical Navier-Stokes system (NS$_{\mu}$). We mention that for smooth initial data the convergence towards Euler equations holds true and the  rate of convergence  in the energy space $L^2$ is bounded by $\mu t$, see \cite{BM} for initial data $v_0\in H^s$ with $s>4$.  In \cite{che0}, Chemin proved a strong convergence in $L^2$ for Yudovich's initial data and obtained that the rate is controlled by $(\mu t)^{\frac12 e^{-Ct}}$, which degenerating in time. To obtain a better result, Constantin and Wu \cite{CW} had to work under vortex patch structure and they obtained $(\mu t)^{\frac12}$. Afterwards, Abidi and Danchin \cite{ad}  improved this result and showed that the rate of convergence is exactly $(\mu t)^{\frac34}$ which is proved to be  optimal for the Rankine vortex. \\
\quad\quad Our second main result reads as follows.
 \begin{Theo}\label{rate} Let $(v_{\mu}, \rho_{\mu})$, $(v, \rho)$, $(\omega_{\mu}, \rho_{\mu})$ and  $(\omega,\rho)$ be the solutions of (B$_{\mu}$), (B$_{0}$), (VD$_{\mu}$) and (VD$_{0}$) respectively with the same initial data such that $\omega_{\mu}^0=\omega^0={\bf 1}_{\Omega_0}$, where $\Omega_0$ is a $C^{1+\EE}$ simply connected bounded  domain. Then for all $t\ge0, \mu\in]0,1[$ and $p\in[2,+\infty[$ we have:  
\begin{enumerate}
\item[(i)] 
 $\Vert v_{\mu}(t)-v(t)\Vert_{L^{p}}+\Vert \rho_{\mu}(t)-\rho(t)\Vert_{L^{p}}\le C_0 e^{e^{C_0 t\log^{2}(2+t)}}(\mu t)^{\frac12+\frac{1}{2p}}.$
\item[(ii)] $\Vert\omega_{\mu}(t)-\omega(t)\Vert_{L^p}\le C_0 e^{e^{C_0 t\log^2(1+t)}}(\mu t)^{\frac{1}{2p}}.$
\end{enumerate}
\end{Theo}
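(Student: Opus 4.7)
The approach I would take mirrors that of Abidi--Danchin \cite{ad}, adapted to the Boussinesq coupling. Set
\[
\delta v\triangleq v_{\mu}-v,\qquad \delta\rho\triangleq\rho_{\mu}-\rho,\qquad \delta\omega\triangleq\omega_{\mu}-\omega;
\]
subtracting the systems (B$_{\mu}$) and (B$_{0}$) and their vorticity formulations yields
\begin{align*}
&\partial_{t}\delta v+v_{\mu}\cdot\nabla \delta v+\delta v\cdot\nabla v-\mu\Delta\delta v+\nabla\delta p=\delta\rho\,\vec e_{2}+\mu\Delta v,\\
&\partial_{t}\delta\rho+v_{\mu}\cdot\nabla\delta\rho+\delta v\cdot\nabla\rho-\Delta\delta\rho=0,\\
&\partial_{t}\delta\omega+v_{\mu}\cdot\nabla\delta\omega+\delta v\cdot\nabla\omega-\mu\Delta\delta\omega=\mu\Delta\omega+\partial_{1}\delta\rho.
\end{align*}
The obstruction is that the forcings $\mu\Delta v$ and $\mu\Delta\omega$ are \emph{not} in $L^{p}$ because of the jump across the boundary of the patch. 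However, Theorem~\ref{pochetheo} yields the decomposition $\omega_{\mu}(t)=\mathbf 1_{\Omega_{\mu}(t)}+\widetilde\rho_{\mu}(t)$ with $\partial\Omega_{\mu}(t)\in C^{1+\EE}$ uniformly in $\mu$. Since the characteristic function of a $C^{1+\EE}$ domain belongs to $B^{1/p}_{p,\infty}$, we obtain $\omega_{\mu}\in B^{1/p}_{p,\infty}$ uniformly in $\mu$, with norm at most $C_{0}\,e^{C_{0}t\log^{2}(1+t)}$; the regular part $\widetilde\rho_{\mu}$ is smoother and causes no issue. This is the crucial ingredient that replaces an unattainable $L^{p}$ control of the singular forcing.

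For part~(i) I would write the Duhamel formula for $\delta v$ after composing with the Leray projector $\mathbb P$; the singular source then contributes $\int_{0}^{t}e^{(t-s)\mu\Delta}\mu\Delta v(s)\,ds$, which is estimated via the standard heat-kernel smoothing
\[
\|e^{\tau\mu\Delta}f\|_{L^{p}}\lesssim (\mu\tau)^{-\alpha/2}\|f\|_{B^{-\alpha}_{p,\infty}},\qquad \alpha\ge 0,
\]
applied with $f=\mu\Delta v=\mu\nabla^{\perp}\omega\in \mu B^{1/p-1}_{p,\infty}$ and $\alpha=1-1/p$. A direct calculation gives
\[
\Bigl\|\int_{0}^{t}e^{(t-s)\mu\Delta}\mu\Delta v(s)\,ds\Bigr\|_{L^{p}}\lesssim\int_{0}^{t}\mu\,(\mu(t-s))^{-(1-1/p)/2}\|\omega(s)\|_{B^{1/p}_{p,\infty}}\,ds\lesssim (\mu t)^{\frac 12+\frac{1}{2p}}.
\]
The remaining transport-type contributions are absorbed by a coupled $L^{p}$ Gronwall argument on $\|\delta v\|_{L^{p}}+\|\delta\rho\|_{L^{p}}$; the interaction $\delta v\cdot\nabla\rho$ on the density side is handled either by integration by parts using $\Div v_{\mu}=0$ or by the parabolic smoothing $\|\nabla\rho(s)\|_{L^{\infty}}\lesssim s^{-1/2}$. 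Feeding the Lipschitz bound \eqref{H} into the Gronwall step yields the double-exponential factor $e^{e^{C_{0}t\log^{2}(2+t)}}$.

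Part~(ii) follows the same pattern applied to $\delta\omega$. The critical term is now $\mu\Delta\omega\in\mu B^{1/p-2}_{p,\infty}$, and heat-smoothing with $\alpha=2-1/p$ gives
\[
\Bigl\|\int_{0}^{t}e^{(t-s)\mu\Delta}\mu\Delta\omega(s)\,ds\Bigr\|_{L^{p}}\lesssim\int_{0}^{t}\mu\,(\mu(t-s))^{-(2-1/p)/2}\|\omega(s)\|_{B^{1/p}_{p,\infty}}\,ds\lesssim (\mu t)^{\frac{1}{2p}}.
\]
The source $\partial_{1}\delta\rho$ is treated by a further application of heat smoothing with $\alpha=1$ fed by the $L^{p}$ bound on $\delta\rho$ from~(i); the nonlinear term $\delta v\cdot\nabla\omega$ is controlled via a Besov paraproduct decomposition using $\nabla\omega\in B^{1/p-1}_{p,\infty}$. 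A final Gronwall closes the estimate.

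The main obstacle is the \emph{uniform-in-$\mu$} control of $\|\omega_{\mu}(s)\|_{B^{1/p}_{p,\infty}}$, which ultimately rests on the persistence of the $C^{1+\EE}$ regularity of $\partial\Omega_{\mu}(t)$ granted by Theorem~\ref{pochetheo}~(ii); without it the whole scheme collapses. A secondary technical point is the careful bookkeeping of the polynomial-in-$t$ amplification from \eqref{H} when iterating the Gronwall inequalities, which is why the final prefactor becomes a double exponential rather than a single one.
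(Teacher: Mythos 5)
Your proposal correctly identifies the decisive ingredient --- the uniform-in-$\mu$ control of $\|\omega_\mu(t)\|_{B^{1/p}_{p,\infty}}$ inherited from the patch structure --- and your exponent bookkeeping for the heat-smoothing of the forcing is consistent with the claimed rate. The genuine gap lies in how the singular forcing and the transport terms are supposed to coexist. You place the dissipation on $\delta v$ and treat $\mu\Delta v$ (the Laplacian of the \emph{inviscid} velocity, which is only measure-valued across $\partial\Omega_t$) as a source to be handled by Duhamel along $e^{\tau\mu\Delta}$. But then $v_\mu\cdot\nabla\delta v$ must also pass through the Duhamel formula: written as $\Div(v_\mu\otimes\delta v)$ it costs a factor $(\mu(t-s))^{-1/2}$, and the resulting singular Gronwall inequality produces a constant of the form $e^{C\mu^{-1}t}$, which is not uniform in $\mu$. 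If instead you recover uniformity by switching to the $L^p$ energy method, the pairing $\mu\int\Delta v\cdot\delta v\,\vert\delta v\vert^{p-2}$ can only be handled by integrating by parts and absorbing $\vert\nabla\delta v\vert$ into the dissipation, which yields the Constantin--Wu rate $(\mu t)^{1/2}$, not $(\mu t)^{\frac12+\frac1{2p}}$. Splitting $\delta v$ into a caloric part carrying the forcing plus a transported remainder does not repair this either: the cross term $v_\mu\cdot\nabla(\text{caloric part})$ reappears as a source whose $L^1_tL^p$ norm is only $O\big(t(\mu t)^{\frac1{2p}}\big)$, again destroying the rate. So ``absorbed by a coupled $L^p$ Gronwall argument'' hides precisely the step that fails.

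The paper's resolution is to keep the forcing on the \emph{viscous} side: the equation for $U=v_\mu-v$ is written with no dissipation acting on $U$ and with source $\mu\Delta v_\mu$, which for $\mu>0$ is an honest $L^1_tL^p$ function thanks to parabolic smoothing of $\omega_\mu$. The plain $L^p$ energy method then closes for every term (pressure via Riesz transforms, transport via $\|\nabla v_\mu\|_{L^\infty}+\|\nabla v\|_{L^\infty}$ and Gronwall), and the entire rate is concentrated in the single quantity $\mu\|\Delta v_\mu\|_{L^1_tL^p}$, estimated by the frequency interpolation
$\|\Delta v_{\mu}\|_{L^{1}_t L^p}\lesssim\|\omega_{\mu}\|_{\widetilde{L}_t^{1} B_{p, \infty}^{1/p}}^{\frac{1}{2}+\frac{1}{2p}}\| \omega_{\mu}\|_{\widetilde{L}_t^{1} B_{p,\infty}^{2+1/p}}^{\frac{1}{2}-\frac{1}{2p}}$
combined with the maximal smoothing effect of Proposition \ref{Persistance}, which bounds $\mu\|\omega_\mu\|_{\widetilde L^1_tB^{2+1/p}_{p,\infty}}$ uniformly; this produces exactly $(\mu t)^{\frac12+\frac1{2p}}$. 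Finally, part (ii) in the paper is \emph{not} obtained by analysing the vorticity difference equation (where $\delta v\cdot\nabla\omega$ and $\mu\Delta\omega$ would raise the same difficulties): it follows softly from part (i) by the interpolation $\|\omega_\mu-\omega\|_{L^p}\lesssim\|v_\mu-v\|_{L^p}^{\frac{1}{p+1}}\|\omega_\mu-\omega\|_{B^{1/p}_{p,\infty}}^{\frac{p}{p+1}}$ together with the uniform persistence of the $B^{1/p}_{p,\infty}$ norm of both vorticities. You should adopt this two-step structure; your Duhamel computations can then be discarded.
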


\begin{rema*} When $\rho_{\mu}^0$ and  $\rho^0$ are constants and $p=2$ we get the result of Abidi and \mbox{Danchin \cite{ad}.}
\end{rema*}
The proof of Theorem \ref{rate} will be done using the approach of  \cite{ad} by combining  some classical ingredients like $L^p-$estimates, real interpolation results and some smoothing effects for the density and the vorticity.\\

The last result is dedicated to prove that $(\mu t)^{\frac{1}{2p}}$ is optimal for vortices in the case of Rankine initial data.

\begin{Theo}\label{optimality0}
We assume that $\rho_{\mu}^0$ and $\rho^0$ being constants and $\omega_{\mu}^0=\omega^0={\bf 1}_{\mathbb{D}}$ with $\mathbb{D}$ the unit disc. Then there exists two positive constants $C_1$ and $C_2$ independent on $\mu$ and $t$, such that for $\mu t\le1$, and $p\in[2,+\infty[$ we have:
\begin{equation*}
C_1 (\mu t)^{\frac{1}{2p}}\le\Vert\omega_{\mu}(t)-\omega(t)\Vert_{L^p}\le C_2 (\mu t)^{\frac{1}{2p}}.
\end{equation*}
\end{Theo}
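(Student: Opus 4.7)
The plan is to reduce the statement to a direct computation for the heat semigroup applied to ${\bf 1}_{\mathbb{D}}$. Since $\rho^{0}_{\mu}$ and $\rho^{0}$ are constants, uniqueness for the linear transport--diffusion equation applied to the density forces $\rho_{\mu}(t)\equiv\rho^{0}_{\mu}$ and $\rho(t)\equiv\rho^{0}$ for all $t\ge0$, so $\partial_{1}\rho_{\mu}=\partial_{1}\rho=0$ and (VD$_{\mu}$), (VD$_{0}$) collapse to the vorticity formulations of $2d$ Navier--Stokes and Euler. Because the initial vorticity ${\bf 1}_{\mathbb{D}}$ is radial, uniqueness together with rotational invariance forces $\omega_{\mu}(t)$ and $\omega(t)$ to remain radial; the Biot--Savart law then produces a purely azimuthal velocity $v_{\mu}(t,x)=V_{\mu}(t,|x|)x^{\perp}/|x|$ (and likewise for $v$), and the convective terms $v_{\mu}\cdot\nabla\omega_{\mu}$, $v\cdot\nabla\omega$ vanish identically. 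Hence
\begin{equation*}
\omega(t)={\bf 1}_{\mathbb{D}}\qquad\text{and}\qquad\omega_{\mu}(t)=e^{\mu t\Delta}{\bf 1}_{\mathbb{D}},
\end{equation*}
and, with $s:=\mu t\in(0,1]$, the theorem reduces to $\|e^{s\Delta}{\bf 1}_{\mathbb{D}}-{\bf 1}_{\mathbb{D}}\|_{L^{p}}\approx s^{1/(2p)}$.

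For the upper bound I would set $u(s,x):=e^{s\Delta}{\bf 1}_{\mathbb{D}}(x)-{\bf 1}_{\mathbb{D}}(x)$ and let $G_{s}(z)=(4\pi s)^{-1}e^{-|z|^{2}/(4s)}$. For $x\in\mathbb{D}$ the set $\mathbb{D}^{c}$ lies at distance at least $d(x):=d(x,\partial\mathbb{D})$ from $x$, and an explicit $2d$ Gaussian tail computation gives
\begin{equation*}
|u(s,x)|\le\int_{|x-y|\ge d(x)}G_{s}(x-y)\,dy=e^{-d(x)^{2}/(4s)},
\end{equation*}
and symmetrically for $x\notin\mathbb{D}$. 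Writing $\RR^{2}$ in tubular coordinates around $\partial\mathbb{D}$ with Jacobian $1\pm d$ then yields $\|u(s)\|_{L^{p}}^{p}\le 2\pi\int_{0}^{\infty}(1+d)\,e^{-pd^{2}/(4s)}\,dd\le C\sqrt{s}$ for $s\in(0,1]$, producing the upper bound $\|u(s)\|_{L^{p}}\le C_{2}\,s^{1/(2p)}$.

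For the matching lower bound I would restrict to the inner annular shell $\mathcal{A}_{s}:=\{x\in\mathbb{D}:d(x,\partial\mathbb{D})\le\tfrac12\sqrt{s}\}$, whose area satisfies $|\mathcal{A}_{s}|\ge c_{1}\sqrt{s}$ uniformly in $s\in(0,1]$. For $x\in\mathcal{A}_{s}$, a half--disc comparison combined with the bounded curvature of $\partial\mathbb{D}$ shows that $|B(x,\sqrt{s})\cap\mathbb{D}^{c}|\ge c\,s$ with $c>0$ independent of $s\in(0,1]$; since $G_{s}\ge(4\pi s)^{-1}e^{-1/4}$ on $B(x,\sqrt{s})$, this gives
\begin{equation*}
-u(s,x)=\int_{\mathbb{D}^{c}}G_{s}(x-y)\,dy\ge\frac{e^{-1/4}}{4\pi s}\cdot c\,s=c_{0}>0\qquad\text{for every }x\in\mathcal{A}_{s}.
\end{equation*}
Integrating this pointwise lower bound over $\mathcal{A}_{s}$ yields $\|u(s)\|_{L^{p}}^{p}\ge c_{0}^{p}c_{1}\sqrt{s}$, that is, $\|u(s)\|_{L^{p}}\ge C_{1}\,s^{1/(2p)}$.

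The substantive content of the argument is the uniform geometric estimate $|B(x,\sqrt{s})\cap\mathbb{D}^{c}|\gtrsim s$ for $x\in\mathcal{A}_{s}$: this is where the assumption that $\mathbb{D}$ is the unit disc (rather than just a $C^{1+\EE}$ patch) enters, through the fact that at scale $\sqrt{s}\le1$ the boundary is nearly flat so that the half--plane approximation is valid up to an $O(s^{3/2})$ curvature correction. I expect no further subtlety: the remaining steps are direct manipulations of the heat kernel, and the reduction of the Boussinesq system to Navier--Stokes/Euler is immediate from the radial invariance of the initial data.
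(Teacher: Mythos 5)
Your proposal is correct, and at the structural level it follows the same strategy as the paper: radial symmetry kills the convective terms, so the problem reduces to showing $\|e^{s\Delta}{\bf 1}_{\mathbb{D}}-{\bf 1}_{\mathbb{D}}\|_{L^p}\approx s^{1/(2p)}$ with $s=\mu t$, and the lower bound is obtained by proving a pointwise bound $|\omega_\mu(t,x)-\omega(t,x)|\ge c_0$ on an annulus of width $\sim\sqrt{\mu t}$ adjacent to $\partial\mathbb{D}$ and then integrating over that annulus. Where you differ is in how the pointwise bound is established and in the treatment of the upper bound. The paper rescales $W(t,\sqrt{\mu t}\,x)$, passes to polar coordinates, and runs an explicit Gaussian computation using $\cos\theta\ge 1-\theta^2/2$ and a substitution $\alpha=\tfrac12\sqrt{r|x|}\,\theta$; your argument replaces all of this by the geometric observation that $G_s\ge (4\pi s)^{-1}e^{-1/4}$ on $B(x,\sqrt s)$ together with $|B(x,\sqrt s)\cap\mathbb{D}^c|\gtrsim s$ for $x$ in the inner shell. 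This is more elementary and more robust; note that for the last estimate you do not even need a curvature correction, since convexity of $\mathbb{D}$ gives exactly $|B(x^\ast,r)\cap\mathbb{D}^c|\ge\tfrac12\pi r^2$ at any boundary point $x^\ast$ (the complement contains the half-plane beyond the tangent line), so the role of the disc is really only to guarantee radiality of the solution. For the upper bound your argument is a genuine addition: the paper's proof of Theorem \ref{optimality0} only establishes the lower bound and implicitly defers the upper bound to Theorem \ref{rate}(ii), whose constant $C_0e^{e^{C_0t\log^2(2+t)}}$ is \emph{not} independent of $t$ as the statement of Theorem \ref{optimality0} requires; your direct computation $|u(s,x)|\le e^{-d(x)^2/(4s)}$ (exact for the $2d$ Gaussian tail) followed by integration in tubular coordinates yields $C_2$ genuinely independent of $t$ and $\mu$, which is what the theorem actually claims.
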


Note that the approach that we shall propose here is different from \cite{ad} which is specific  for $p=2$. The proof of Abidi and Danchin uses the explicit form of  Fourier transform of the Rankine vortex  given through Bessel function combined with its asymptotic behavior. Nevertheless these tools are useless for $p\neq2$ and the alternative is to make the computations in the physical variable using the explicit structure of the heat kernel.

\quad\quad For the reader's convenience, we provide a brief outline of this article. Section 2, starts with  few important results about the Littlewood-Paley decomposition, para-differential calculus and some functional spaces. Moreover, we state some useful technical lemmas, in particular two smoothing effects estimates for  transport-diffusion equations governing  respectively the density and the vorticity evolution. Section 3, mainly treats the general version of Theorem \ref{pochetheo}. Section 4 is divided into two parts. The first one is dedicated to the upper bound rate of convergence. The second part deals with the optimality of the rate of convergence between the vortices. We end this paper with an appendix where we give the proof of some  technical propositions. 

\section{Tools}
Before proceeding, we specify some of the notations we will constantly use during this work. We denote by $C$ a positive constant which may be different in each occurrence but it does not depend on the initial data. We shall sometimes alternatively use the notation $X\lesssim Y$ for an inequality of type $X\le CY$ with $C$ independent of $X$ and $Y$. The notation $C_0$ means a constant depend on the involved norms of the initial data.

\subsection{Littlewood-Paley theory} Our results mostly rely on Fourier analysis methods based on a nonhomogeneous dyadic partition of unity with respect to the Fourier variable. The so-called Littlewood-Paley decomposition enjoying  particularly "nice" properties. These properties are the basis for introducing the important scales of Besov and H\"older spaces and for their study.\\

\quad\quad Let $\chi\in\mathscr{D}(\RR^2)$ be a reference cut-off function, monotonically decaying along rays and so that
\begin{equation*}
\left\{\begin{array}{ll}
\chi\equiv1 & \textrm{if $\|\xi\|\le\frac12$}\\
0\le \chi\le1 & \textrm{if $\frac12\le\|\xi\|\le1$}\\
\chi\equiv0 & \textrm{if $\|\xi\|\ge1.$} 
\end{array}
\right.
\end{equation*}
Define $\varphi(\xi)\triangleq\chi(\frac{\xi}{2})-\chi(\xi)$. We obviously check that $\varphi\ge0$ and $$\supp\varphi\subset\mathcal{C}\triangleq\{\xi\in\RR^2:\frac12\le\|\xi\|\le1\}.$$  
Then we have the following elementary properties, see for example \cite{HCD, che1}

\begin{prop} Let $\chi$ and $\varphi$ be as above. Then the following assertions are hold.
\begin{enumerate}
\item[(1)] Decompositon of the unity: 
$$
\forall\xi\in\RR^2,\quad \chi(\xi)+\sum_{q\ge0}\varphi(2^{-q}\xi)=1.
$$
\item[(2)] Almost orthogonality in the sense of $L^2$:
$$
\forall\xi\in\RR^2,\quad \frac{1}{2}\le\chi^2(\xi)+\sum_{q\ge0}\varphi^2(2^{-q}\xi)\le1.
$$
\end{enumerate}

\end{prop}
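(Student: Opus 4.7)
The plan is to verify both assertions via elementary properties of the reference cut-off $\chi$ and the telescoping identity $\varphi(\xi)=\chi(\xi/2)-\chi(\xi)$ that defines $\varphi$.

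For assertion (1), I would proceed by telescoping. Rewriting $\varphi(2^{-q}\xi)=\chi(2^{-q-1}\xi)-\chi(2^{-q}\xi)$ and summing from $q=0$ to $q=N$ gives
$$
\sum_{q=0}^{N}\varphi(2^{-q}\xi)=\chi(2^{-N-1}\xi)-\chi(\xi).
$$
Since $\chi\equiv 1$ on the ball $\{\|\eta\|\le 1/2\}$, the first term on the right equals $1$ as soon as $2^{-N-1}\|\xi\|\le 1/2$, i.e.\ for $N$ large depending on $\xi$. Passing to the limit and moving $\chi(\xi)$ to the left-hand side yields the decomposition of unity.

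For assertion (2), the upper bound is essentially free. Because $0\le\chi,\varphi\le 1$ pointwise (the first by hypothesis, the second because $\varphi$ is a difference of two values of $\chi$ at scales where one dominates the other), one has $\chi^{2}\le\chi$ and $\varphi^{2}(2^{-q}\xi)\le\varphi(2^{-q}\xi)$; summing and invoking assertion (1) gives the required bound $\le 1$. For the lower bound, the key observation is the finite-overlap of the family $\{\chi\}\cup\{\varphi(2^{-q}\cdot)\}_{q\ge 0}$: the annular support of $\varphi$ implies that the dyadic shells $\supp\varphi(2^{-q}\cdot)$ that contain a given $\xi$ correspond to at most two consecutive values of $q$, with $\chi$ active only at the first shell. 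Denoting the at most two non-zero values by $a$ and $b$ and using assertion (1), so that $a+b=1$, the Cauchy--Schwarz inequality (or direct convexity) gives $a^{2}+b^{2}\ge\tfrac12(a+b)^{2}=\tfrac12$.

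The main technical point to pin down is the finite-overlap claim, which rests on a careful reading of the support of $\varphi$ coming from the identity $\varphi=\chi(\cdot/2)-\chi(\cdot)$. One checks that $\supp\varphi$ is contained in an annulus of bounded ratio, so the rescaled supports $\supp\varphi(2^{-q}\cdot)$ are nested in dyadic shells that overlap only when their indices differ by at most one, and $\chi$ overlaps only with the first such shell; this is exactly the configuration needed for the Cauchy--Schwarz argument to give the constant $\tfrac12$. Everything else in the proof is a short algebraic manipulation on the telescoping sum.
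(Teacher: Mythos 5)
Your proof is correct and is exactly the standard argument (telescoping for the partition of unity; nonnegativity of $\varphi$ from the monotone decay of $\chi$ along rays for the upper bound; and the at-most-two-nonvanishing-terms observation combined with $a^{2}+b^{2}\ge\tfrac12(a+b)^{2}$ for the lower bound). The paper does not prove this proposition itself but defers to \cite{HCD,che1}, where the argument is the same as yours, so there is nothing to reconcile.
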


The Littlewood-Paley or cut-off operators are defined as follows.
\begin{defi} For every $u\in\mathcal{S}'(\RR^2)$, setting 

\begin{equation*}
\Delta_{-1}u\triangleq\chi(\DD)u,\quad \Delta_{q}u\triangleq\varphi(2^{-q}\DD)u\quad \mbox{if}\;q\in\NN,\quad S_{q}u\triangleq\sum_{j\le q-1}\Delta_{j}u\quad\mbox{for}\; q\ge0.
\end{equation*}
\end{defi}
Some properties of $\Delta_q$ and $S_q$ are listed in the following proposition.
\begin{prop} Let $u,v\in\mathcal{S}'(\RR^2)$ we have
\begin{enumerate}
\item[(i)] $\vert p-q\vert\ge2\Longrightarrow\Delta_p\Delta_q u\equiv0$,
\item[(ii)] $\vert p-q\vert\ge4\Longrightarrow\Delta_q(S_{p-1}u\Delta_p v)\equiv0$,
\item[(iii)] $\Delta_q, S_q: L^p\rightarrow L^p$ uniformly with respect to  $q$ and $p$.
\item[(iv)] 
$$
u=\sum_{q\ge-1}\Delta_q u.
$$
\end{enumerate}  
\end{prop}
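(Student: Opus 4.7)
The plan is to treat each $\Delta_q$ and $S_q$ as a Fourier multiplier, reducing (i) and (ii) to support considerations for $\chi$ and $\varphi$, and (iii) to Young's convolution inequality applied to their rescaled inverse Fourier transforms. Item (iv) will be obtained by identifying the partial sum with the low-frequency truncation $\chi(2^{-N}\DD)u$ and passing to the limit in $\mathcal{S}'(\RR^2)$.

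For (i), I would observe that $\widehat{\Delta_q u}$ is supported in the dyadic annulus $\mathcal{C}_q\triangleq\{2^{q-1}\le\|\xi\|\le 2^q\}$ for $q\ge 0$ (and in $\{\|\xi\|\le 1\}$ for $q=-1$), directly from the definition and the support property of $\varphi$ asserted just before the proposition. If $|p-q|\ge 2$, these supports are disjoint: assuming $p\ge q+2$, the inner radius of $\mathcal{C}_p$ is at least $2^{q+1}$, which strictly exceeds the outer radius of $\mathcal{C}_q$. Hence $\widehat{\Delta_p\Delta_q u}\equiv 0$; the edge cases involving $\Delta_{-1}$ are handled in the same way.

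For (ii), the key preliminary is the telescoping identity
\begin{equation*}
\chi(\xi)+\sum_{j=0}^{p-2}\varphi(2^{-j}\xi)=\chi(2^{-(p-1)}\xi),
\end{equation*}
coming from $\varphi(\xi)=\chi(\xi/2)-\chi(\xi)$, which shows that $\widehat{S_{p-1}u}$ is supported in $\{\|\xi\|\le 2^{p-2}\}$. Combined with the annular support of $\widehat{\Delta_p v}$ in $\mathcal{C}_p$, the Fourier transform of the product $S_{p-1}u\,\Delta_p v$, being a convolution, is contained in an annulus of the form $\{2^{p-2}\le\|\xi\|\le 5\cdot 2^{p-2}\}$ by the triangle inequality. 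A direct comparison with $\mathcal{C}_q$ then shows the intersection is empty as soon as $|p-q|\ge 4$, giving the claim.

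For (iii), the same telescoping realizes $S_q$ and $\Delta_q$ as convolution operators with rescaled Schwartz kernels: $\Delta_q u=2^{2q}g_0(2^q\cdot)*u$ with $g_0=\mathcal{F}^{-1}\varphi$, and $S_q u=2^{2q}h_0(2^q\cdot)*u$ with $h_0=\mathcal{F}^{-1}\chi$; both kernels are Schwartz, hence in $L^1(\RR^2)$, and the $L^1$ norm is invariant under the dilation $h\mapsto 2^{2q}h(2^q\cdot)$. Young's inequality then yields $\|\Delta_q u\|_{L^p}\le\|g_0\|_{L^1}\|u\|_{L^p}$ and $\|S_q u\|_{L^p}\le\|h_0\|_{L^1}\|u\|_{L^p}$, with constants independent of both $q$ and $p$. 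Finally, for (iv), the telescoping identifies $\sum_{j=-1}^{N-1}\Delta_j u$ with $S_N u=\chi(2^{-N}\DD)u$; since $\chi(2^{-N}\xi)\widehat\phi(\xi)\to\widehat\phi(\xi)$ in $\mathcal{S}(\RR^2)$ for every test function $\phi$, duality gives $S_N u\to u$ in $\mathcal{S}'(\RR^2)$, which is the desired expansion. Beyond the mild bookkeeping of scales in (ii), no step poses a substantive obstacle; the whole proposition is routine Fourier-analytic manipulation.
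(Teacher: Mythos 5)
The paper states this proposition without proof, referring to \cite{HCD,che1}, and your argument is the standard Fourier-support/Young's-inequality proof found there; items (i), (iii) and (iv) are fine (modulo harmless off-by-one indexing: $S_qu=\chi(2^{-(q-1)}\DD)u$, not $\chi(2^{-q}\DD)u$). There is, however, a concrete slip in (ii). The telescoping identity you write gives $\widehat{S_{p-1}u}=\chi(2^{-(p-1)}\cdot)\widehat{u}$, which is supported in $\{\|\xi\|\le 2^{p-1}\}$, not in $\{\|\xi\|\le 2^{p-2}\}$ as you claim (the smaller ball is where $\chi(2^{-(p-1)}\cdot)\equiv 1$, not its support). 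With the corrected radius, the sum set of $\{\|\xi\|\le 2^{p-1}\}$ and the annulus $\supp\varphi(2^{-p}\cdot)$, whose inner radius is exactly $2^{p-1}$, has lower bound $2^{p-1}-2^{p-1}=0$: the product's spectrum is only confined to a ball, and the vanishing of $\Delta_q(S_{p-1}u\,\Delta_p v)$ for $q\le p-4$ no longer follows from disjointness of supports. (The case $q\ge p+4$ survives, since the upper bound on the spectrum is unaffected.)

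This degeneracy is precisely why the standard references choose cutoffs with $\supp\chi\subset B(0,\tfrac43)$ and $\supp\varphi\subset\{\tfrac34\le\|\xi\|\le\tfrac83\}$, so that the rescaled ball containing $\widehat{S_{p-1}u}$ has radius \emph{strictly} smaller than the inner radius of the annulus containing $\widehat{\Delta_p v}$, and the sum set is a genuine annulus $2^p\{\tfrac{1}{12}\le\|\xi\|\le\tfrac{10}{3}\}$. (Note also that the paper's own assertion $\supp\varphi\subset\{\tfrac12\le\|\xi\|\le 1\}$ is inconsistent with $\varphi(\xi)=\chi(\xi/2)-\chi(\xi)$, whose support reaches out to $\|\xi\|=2$.) So your strategy is the right one, but to make (ii) correct you must either adopt the standard radii or otherwise justify a strictly positive gap between the outer radius of $\supp\widehat{S_{p-1}u}$ and the inner radius of $\supp\widehat{\Delta_p v}$; as written, the step "the Fourier transform of the product is contained in the annulus $\{2^{p-2}\le\|\xi\|\le 5\cdot 2^{p-2}\}$" does not follow.
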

Likewise the homogeneous operators $\dot{\Delta}_{q}$ and $\dot{S}_{q}$ are defined by
\begin{equation}\label{Hom}
\forall{q}\in \mathbb{Z}\quad\dot{\Delta}_{q}=\varphi(2^{q}D)u, \quad \dot{S}_{q}=\sum_{ j\le q-1}\dot{\Delta}_{j}v.
\end{equation}
Now, we will recall the definition of the   Besov spaces.
\begin{defi} For $(s,p,r)\in\RR\times[1,  +\infty]^2$. The inhomogeneous Besov space $B_{p,r}^s$ (resp. the homogeneous Besov space $\dot{B}_{p,r}^s$) is the set of all tempered distributions $u\in\mathcal{S}^{'}$ (resp. $u\in\mathcal{S}^{'}_{|{\bf P}})$ such that
\begin{eqnarray*}
&&\Vert u\Vert_{{B}_{p, r}^{s}}\triangleq\Big(2^{qs}\Vert \Delta_{q} u\Vert_{L^{p}}\Big)_{\ell^r}<\infty. \\
&&\big(\mbox{resp. }\Vert u\Vert_{\dot{{B}}_{p, r}^{s}}\triangleq\ \Big(2^{qs}\Vert \dot\Delta_{q} u\Vert_{L^{p}}\Big)_{\ell^r(\mathbb{Z})}<\infty\big). 
\end{eqnarray*}
We have denoted by ${\bf P}$ the set of polynomials.
\end{defi}

\begin{rema*} We notice that:
\begin{enumerate}
\item[(1)] If $s\in\RR_+\backslash\NN$, the H\"older space noted by $C^s$ coincides with $B^s_{\infty,\infty}$.
\item[(2)] $\big(C^s, \|\cdot\|_{C^s}\big)$ is a Banach space coincides with the usual   H\"{o}lder space $C^s$ with equivalent norms, 
\begin{equation}\label{N-equivalent}
\Vert u\Vert_{C^s}\lesssim \Vert u\Vert_{L^\infty}+\sup_{x\neq y}\frac{\vert u(x)-u(y)\vert}{\vert x-y\vert^s}\lesssim\Vert u\Vert_{C^s}.
\end{equation}  
\item[(3)] If $s\in\NN$, the obtained space is so-called H\"older-Zygmund space and still noted by $B^s_{\infty,\infty}$.
\end{enumerate} 
 \end{rema*}

\subsection{Paradifferential calculus}
The well-known  {\it Bony's} decomposition \cite{b} enables us to split formally the product of two tempered distributions $u$ and $v$ into three pieces. In
what follows, we shall adopt the following definition for paraproduct and remainder:
\begin{defi} For a given $u, v\in\mathcal{S}'$ we have
 $$
uv=T_u v+T_v u+\mathscr{R}(u,v),
$$
with
$$T_u v=\sum_{q}S_{q-1}u\Delta_q v,\quad  \mathscr{R}(u,v)=\sum_{q}\Delta_qu\widetilde\Delta_{q}v  \quad\hbox{and}\quad \widetilde\Delta_{q}=\Delta_{q-1}+\Delta_{q}+\Delta_{q+1}.
$$
\end{defi}

The mixed space-time spaces are stated as follows. 
\begin{defi} Let $T>0$ and $(\beta,p,r,s)\in[1, \infty]^3\times\RR$.  We define the spaces $L^{\beta}_{T}B_{p,r}^s$ and $\widetilde L^{\beta}_{T}B_{p,r}^s$ respectively by: 
$$
L^\beta_{T}B_{p,r}^s\triangleq\Big\{u: [0,T]\to\mathcal{S}^{'}; \Vert u\Vert_{L_{T}^{\beta}B_{p, r}^{s}}=\big\Vert\big(2^{qs}\Vert \Delta_{q}u\Vert_{L^{p}}\big)_{\ell^{r}}\big\Vert_{L_{T}^{\beta}}<\infty\Big\},
$$
$$
\widetilde L^{\beta}_{T}B_{p,r}^s\triangleq\Big\{u:[0,T]\to\mathcal{S}^{'}; \Vert u\Vert_{\widetilde L_{T}^{\beta}{B}_{p, r}^{s}}=\big(2^{qs}\Vert \Delta_{q}u\Vert_{L_{T}^{\beta}L^{p}}\big)_{\ell^{r}}<\infty\Big\}.
$$
The relationship between these spaces is given by the following embeddings. Let $ \varepsilon>0,$ then 
\begin{equation}\label{embeddings}
\left\{\begin{array}{ll}
L^\beta_{T}B_{p,r}^s\hookrightarrow\widetilde L^\beta_{T}B_{p,r}^s\hookrightarrow L^\beta_{T}B_{p,r}^{s-\varepsilon} & \textrm{if  $r\geq \beta$},\\
L^\beta_{T}B_{p,r}^{s+\varepsilon}\hookrightarrow\widetilde L^\beta_{T}B_{p,r}^s\hookrightarrow L^\beta_{T}B_{p,r}^s & \textrm{if $\beta\geq r$}.
\end{array}
\right.
\end{equation}
\end{defi}
Accordingly, we have the following interpolation result. 
\begin{cor}
Let $T>0,\; s_1<s<s_2$ and $\zeta\in(0, 1)$ such that $s=\zeta s_1+(1-\zeta)s_2$. Then we have
\begin{equation}\label{m1}
\Vert u\Vert_{\widetilde L_{T}^{a}{B}_{p, r}^{s}}\le C\Vert u\Vert_{\widetilde L_{T}^{a}{B}_{p, \infty}^{s_1}}^{\zeta}\Vert u\Vert_{\widetilde L_{T}^{a}{B}_{p, \infty}^{s_2}}^{1-\zeta}.
\end{equation}
\end{cor}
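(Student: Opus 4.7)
The plan is to carry out the classical real-interpolation argument for Besov spaces in the Chemin--Lerner setting. Unfolding the definition, one has
\begin{equation*}
\Vert u\Vert_{\widetilde L^{a}_T B^s_{p,r}}^r = \sum_{q\ge -1} 2^{qsr}\Vert \Delta_q u\Vert_{L^a_T L^p}^r,
\end{equation*}
and the goal is to bound each dyadic piece $\Vert\Delta_q u\Vert_{L^a_T L^p}$ by whichever of the two ``endpoint'' norms is better adapted to the frequency $2^q$.

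For a cutoff integer $N$ (to be optimized) I would split the sum into $q\le N$ and $q>N$. In the low-frequency range I use the bound
$\Vert\Delta_q u\Vert_{L^a_T L^p}\le 2^{-qs_1}\Vert u\Vert_{\widetilde L^a_T B^{s_1}_{p,\infty}}$; since $s-s_1>0$ the geometric sum $\sum_{q\le N}2^{q(s-s_1)r}$ is controlled by its last term, giving a contribution $\lesssim 2^{N(s-s_1)}\Vert u\Vert_{\widetilde L^a_T B^{s_1}_{p,\infty}}$. In the high-frequency range I use $\Vert\Delta_q u\Vert_{L^a_T L^p}\le 2^{-qs_2}\Vert u\Vert_{\widetilde L^a_T B^{s_2}_{p,\infty}}$; since $s-s_2<0$ the series $\sum_{q>N}2^{q(s-s_2)r}$ is convergent and dominated by its first term, yielding $\lesssim 2^{N(s-s_2)}\Vert u\Vert_{\widetilde L^a_T B^{s_2}_{p,\infty}}$.

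I then choose $N$ so that the two bounds balance, namely the integer closest to
\begin{equation*}
N\sim \frac{1}{s_2-s_1}\log_2\!\Bigl(\Vert u\Vert_{\widetilde L^a_T B^{s_2}_{p,\infty}}/\Vert u\Vert_{\widetilde L^a_T B^{s_1}_{p,\infty}}\Bigr),
\end{equation*}
and use the identities $s-s_1=(1-\zeta)(s_2-s_1)$ and $s_2-s=\zeta(s_2-s_1)$ to recognize both contributions as $\Vert u\Vert_{\widetilde L^a_T B^{s_1}_{p,\infty}}^{\zeta}\Vert u\Vert_{\widetilde L^a_T B^{s_2}_{p,\infty}}^{1-\zeta}$, up to a constant depending only on $s_1,s_2,r$. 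The degenerate case in which one of the two norms vanishes is handled separately and trivially. The case $r=\infty$ is even simpler: for each $q$ one just takes the better of the two dyadic estimates, i.e.\ $\min\bigl(2^{q(s-s_1)}\Vert u\Vert_{\widetilde L^a_T B^{s_1}_{p,\infty}},\,2^{q(s-s_2)}\Vert u\Vert_{\widetilde L^a_T B^{s_2}_{p,\infty}}\bigr)$, and optimizes in $q$.

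The proof is essentially routine, so there is no real ``main obstacle''; the only thing to check carefully is that the Chemin--Lerner structure does not spoil the argument, which it does not, because the operations are all pointwise in frequency: the $L^a_T L^p$ norm of $\Delta_q u$ plays exactly the same role in the estimate as the $L^p$ norm does in the static Besov version. The slight care needed in rounding $N$ to an integer absorbs into the implicit constant $C$.
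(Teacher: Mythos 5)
Your proof is correct; the paper states this corollary without proof, and your argument — splitting the dyadic sum at a cutoff frequency $N$, bounding the low and high blocks by the $B^{s_1}_{p,\infty}$ and $B^{s_2}_{p,\infty}$ norms respectively, and optimizing $N$ — is the standard one, and is in fact the very same technique the paper deploys explicitly in its Proposition on $\Vert\Delta v_\mu\Vert_{L^r_tL^p}$ (choice of $N$ via $2^{2N}\approx\Vert\omega_\mu\Vert_{\widetilde L^r_tB^{2+\eta}_{p,\infty}}/\Vert\omega_\mu\Vert_{\widetilde L^r_tB^{\eta}_{p,\infty}}$). Your observation that the Chemin--Lerner structure is harmless because everything acts blockwise in frequency is exactly the right point to make.
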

The following {\it Bernstein} inequalities describe a bound on the derivatives of a function in the $L^b-$norm in terms of the value of the function in the $L^a-$norm, under the assumption that the Fourier transform of the function is compactly supported. For more details we refer \cite{HCD,che1}.
\begin{lem}\label{Bernstein} There exists a constant $C>0$ such that for $1\le a\le b\le\infty$, for every function $u$ and every $q\in\NN\cup\{-1\}$, we have
\begin{enumerate}
\item[(i)]
\begin{equation*}
\sup_{\vert\alpha\vert=k}\Vert\partial^{\alpha}S_{q}u\Vert_{L^{b}}\le C^{k}2^{q\big(k+2\big(\frac{1}{a}-\frac{1}{b}\big)\big)}\Vert S_{q}u\Vert_{L^{a}},\\
\end{equation*}
\item[(ii)]
\begin{equation*}
C^{-k}2^{qk}\Vert\Delta_{q}u\Vert_{L^{a}}\le\sup_{\vert\alpha\vert=k}\Vert\partial^{\alpha}\Delta_{q}u\Vert_{L^{a}}\le C^{k}2^{qk}\Vert\Delta_{q}u\Vert_{L^{a}}.
\end{equation*}
\end{enumerate}
\end{lem}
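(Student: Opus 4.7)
The plan is to carry out the classical Fourier--analytic proof of the Bernstein inequalities: rewrite each spectral localization as a convolution of the input with a rescaled Schwartz kernel, and then derive both directions from Young's convolution inequality combined with the dilation scaling of $L^{p}$-norms. The only genuinely non-routine point is the lower bound in (ii), which requires inverting ``multiplication by $\xi^{\alpha}$'' on the Fourier support of $\varphi$.

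For the upper bounds, I would fix $\psi\in\mathscr{D}(\RR^{2})$ equal to $1$ on the unit ball (hence on $\supp\chi$), set $h\triangleq\mathcal{F}^{-1}\psi\in\mathcal{S}(\RR^{2})$ and $h_{q}(x)\triangleq 2^{2q}h(2^{q}x)$. Since $\widehat{S_{q}u}$ is supported in $\{|\xi|\le 2^{q}\}$, one has $S_{q}u=h_{q}\ast S_{q}u$, whence $\partial^{\alpha}S_{q}u=(\partial^{\alpha}h_{q})\ast S_{q}u$. A direct rescaling computation gives $\|\partial^{\alpha}h_{q}\|_{L^{c}}=2^{q(k+2(1-1/c))}\|\partial^{\alpha}h\|_{L^{c}}$, and Young's inequality with exponent $c$ satisfying $1+1/b=1/c+1/a$ (equivalently $1-1/c=1/a-1/b$), together with the Schwartz bound $\|\partial^{\alpha}h\|_{L^{c}}\le C^{k}$, then yields (i). The upper bound in (ii) is obtained by the same argument after replacing $\psi$ with a $\tilde\varphi\in\mathscr{D}(\RR^{2}\setminus\{0\})$ supported in a slightly fattened annulus around $\mathcal{C}$ and equal to $1$ on $\supp\varphi$, and specialising to $b=a$ so that the factor $2^{2q(1/a-1/b)}$ disappears.

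The main obstacle is the lower bound in (ii). The algebraic key is that $\varphi$ (and hence $\tilde\varphi$) vanishes near the origin, so any positive power of $|\xi|$ is invertible on $\supp\tilde\varphi$. Starting from the identity
\begin{equation*}
|\xi|^{2k}=\sum_{|\beta|=k}\binom{k}{\beta}\bigl(\xi^{\beta}\bigr)^{2},
\end{equation*}
I would set $\hat g_{\beta}(\xi)\triangleq\binom{k}{\beta}\,\xi^{\beta}\,\tilde\varphi(\xi)/|\xi|^{2k}$, which lies in $\mathscr{D}(\RR^{2}\setminus\{0\})$; its inverse Fourier transform $g_{\beta}$ therefore belongs to $\mathcal{S}(\RR^{2})$, and these functions satisfy
\begin{equation*}
\tilde\varphi(\xi)=(-i)^{k}\sum_{|\beta|=k}(i\xi)^{\beta}\,\hat g_{\beta}(\xi).
\end{equation*}
Rescaling $\xi\mapsto 2^{-q}\xi$ and transporting the identity back to the physical side produces
\begin{equation*}
\Delta_{q}u=(-i)^{k}\sum_{|\beta|=k}2^{-qk}\,(g_{\beta})_{q}\ast\partial^{\beta}\Delta_{q}u,\qquad (g_{\beta})_{q}(x)\triangleq 2^{2q}g_{\beta}(2^{q}x),
\end{equation*}
after which Young's inequality together with the scale-invariant estimate $\|(g_{\beta})_{q}\|_{L^{1}}=\|g_{\beta}\|_{L^{1}}\le C^{k}$ delivers
\begin{equation*}
\|\Delta_{q}u\|_{L^{a}}\le C^{k}\,2^{-qk}\sup_{|\alpha|=k}\|\partial^{\alpha}\Delta_{q}u\|_{L^{a}},
\end{equation*}
which is the lower bound in (ii). The construction of the $g_{\beta}$ is the delicate algebraic step; once it is in place, the rest of the argument is a routine application of Young's inequality and scaling.
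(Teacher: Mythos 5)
Your argument is correct and is precisely the classical convolution--scaling proof (rescaled Schwartz kernels, Young's inequality, and division by $|\xi|^{2k}$ on the annulus for the reverse bound) found in the references \cite{HCD,che1}, to which the paper defers instead of supplying a proof. One small point: your lower bound in (ii) relies on $\tilde\varphi$ vanishing near the origin and hence only applies to $q\ge 0$; this is unavoidable, since the reverse inequality is in fact false for the block $\Delta_{-1}=\chi(\DD)$ (take $\hat u$ concentrated near $\xi=0$), so the statement's inclusion of $q=-1$ in part (ii) is a harmless imprecision of the lemma as quoted, not a gap in your proof.
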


A noteworthy consequence of Bernstein inequality (i) is the following embedding: 
$$
B_{p,r}^{s}\hookrightarrow B^{\widetilde s}_{\widetilde p,\widetilde r}\quad \textnormal{whenever}\; \widetilde{p}\ge p,
$$
with
$$
\widetilde s<s-2\Big(\frac{1}{p}-\frac{1}{\widetilde p}\Big)\quad\textnormal{or}\quad \widetilde s=s-2\Big(\frac{1}{p}-\frac{1}{\widetilde{p}}\Big) \quad\textnormal{and}\quad \widetilde r\le r.
$$
\subsection{Useful results} This paragraph is reserved to some useful properties freely used throughout this article. The most results concerning the system (VD$_\mu$) are rely strongly on a priori estimates in Besov spaces for the transport-diffusion equation: 
\begin{equation}
\left\{ \begin{array}{ll} 
\partial_{t}a+v\cdot\nabla a-\mu\Delta a=f \\
a_{| t=0}=a^0.  
\end{array} \right.\tag{TD$_\mu$}
\end{equation} 
We start by the persistence of Besov regularity for (TD$_\mu$), which its proof may be found for example in \cite{HCD}.
\begin{prop}\label{prop1} Let $(s, r, p)\in]-1, 1[\times[1, \infty]^2$ and $v$ be a smooth divergence free vector-field.  We assume that $a^0\in{B}_{p, r}^{s}$ and $f\in L_{loc}^{1}(\RR_+; {B}_{p, r}^{s})$. Then for every smooth solution $a$  of (TD$_\mu$) and  $t\geq0$ we have
\begin{equation*}
\Vert a(t)\Vert_{{B}_{p, r}^{s}}\le Ce^{CV(t)}\Big(\Vert a^0\Vert_{{B}_{p, r}^{s}}+\int_{0}^{t}e^{-CV(\tau)}\Vert f(\tau)\Vert_{{B}_{p, r}^{s}}d\tau\Big),
\end{equation*}
with
\begin{equation*}
V(t)=\int_{0}^t\|\nabla v(\tau)\|_{L^\infty}d\tau
\end{equation*}
and $C$  a constant which depends only on $s$ and not on the viscosity.  For the limit case 
\begin{equation*}
s=-1, r=\infty \mbox{ and } p\in[1, \infty]\quad\mbox{ or }\quad s=1, r=1 \mbox{ and } p\in[1, \infty]
\end{equation*}
the above estimate remains true despite we change $V(t)$  by $Z(t)\overset{def}{=}\Vert v\Vert_{L_{t}^{1}{B}_{\infty, 1}^{1}}$. In addition if $a=\textnormal{curl } v$, then the above estimate holds true for all $s\in[1, +\infty[$.
\end{prop}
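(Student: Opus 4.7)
The plan is to apply Littlewood--Paley localization to (TD$_\mu$), derive a Duhamel-type inequality at each dyadic frequency exploiting the heat smoothing, control the transport commutator, and sum with the weight $2^{qs}$ in $\ell^r$.

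First I would set $a_q := \Delta_q a$ and apply $\Delta_q$ to the equation, producing
$$\partial_t a_q + v\cdot\nabla a_q - \mu\Delta a_q = \Delta_q f + R_q, \qquad R_q := [v\cdot\nabla,\Delta_q]a.$$
Multiplying by $|a_q|^{p-2}a_q$, integrating over $\RR^2$, and using $\Div v = 0$ to cancel the convective contribution, I would combine this with the classical Chemin-type dissipation lower bound $-\int(\Delta a_q)|a_q|^{p-2}a_q\,dx \geq c\,2^{2q}\|a_q\|_{L^p}^p$ (valid for $q\geq 0$ thanks to the spectral localization of $a_q$) to obtain the differential inequality
$$\frac{d}{dt}\|a_q\|_{L^p} + c\mu 2^{2q}\|a_q\|_{L^p} \leq \|\Delta_q f\|_{L^p} + \|R_q\|_{L^p},$$
which integrates to
$$\|a_q(t)\|_{L^p} \leq e^{-c\mu t 2^{2q}}\|\Delta_q a^0\|_{L^p} + \int_0^t e^{-c\mu(t-\tau)2^{2q}}\bigl(\|\Delta_q f(\tau)\|_{L^p} + \|R_q(\tau)\|_{L^p}\bigr)d\tau.$$
For $p=\infty$ the energy step is replaced by the maximum principle for convection--diffusion combined with a frequency-localized heat semigroup bound, yielding the same inequality.

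The crucial step is then the sharp commutator estimate
$$\Bigl\|\bigl(2^{qs}\|R_q\|_{L^p}\bigr)_{q\geq -1}\Bigr\|_{\ell^r} \leq C\|\nabla v\|_{L^\infty}\|a\|_{B^s_{p,r}}, \qquad s\in(-1,1),$$
proved via Bony's decomposition $v\cdot\nabla a = T_v\nabla a + T_{\nabla a}v + \mathscr{R}(v,\nabla a)$ and exploiting $\Div v = 0$ to recast the high--low term so that only $\nabla v$ appears in $L^\infty$. Multiplying the Duhamel bound by $2^{qs}$, bounding $e^{-c\mu(t-\tau)2^{2q}}\leq 1$, taking the $\ell^r$ norm, inserting the commutator bound and applying Gronwall then delivers the claimed estimate with $V(t) = \int_0^t\|\nabla v\|_{L^\infty}d\tau$.

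The endpoints $(s,r)=(-1,\infty)$ and $(s,r)=(1,1)$ are precisely where the above commutator bound breaks down, because one of the three pieces of Bony's decomposition saturates the admissible regularity. There I would fall back on the coarser estimate
$$\Bigl\|\bigl(2^{qs}\|R_q\|_{L^p}\bigr)_{q}\Bigr\|_{\ell^r} \leq C\|v\|_{B^1_{\infty,1}}\|a\|_{B^s_{p,r}},$$
forcing the substitution of $V(t)$ by $Z(t)=\|v\|_{L^1_tB^1_{\infty,1}}$. Finally, when $a=\textnormal{curl } v$, the Biot--Savart relation $v=\nabla^\perp\Delta^{-1}a$ allows trading a loss of one derivative on $v$ for control of $a$ itself; a Vishik-type refined commutator estimate then extends the admissible range to all $s\in[1,+\infty)$. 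The main obstacle throughout is the commutator estimate in its sharp form: the restriction $s\in(-1,1)$ and the dichotomy between $V(t)$ and $Z(t)$ are direct consequences of the limitations of Bony's decomposition at these endpoints.
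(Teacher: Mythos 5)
Your proposal is correct and follows essentially the same route as the paper, which does not prove this proposition itself but cites the standard argument in Bahouri--Chemin--Danchin: dyadic localization, an $L^p$ Duhamel bound in which the (sign-definite) dissipation is simply discarded, the commutator estimate $\bigl\|\bigl(2^{qs}\|[v\cdot\nabla,\Delta_q]a\|_{L^p}\bigr)_q\bigr\|_{\ell^r}\lesssim\|\nabla v\|_{L^\infty}\|a\|_{B^s_{p,r}}$ for $s\in(-1,1)$, and Gronwall, with the $Z(t)$ substitution at the endpoints and the Biot--Savart relation handling the vorticity case $s\ge 1$.
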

Next, we state the maximal smoothing effect result for (TD$_\mu$) in mixed time-space spaces, which its proof was developped in \cite{Hmd-Ker}. 

\begin{prop}\label{Persistance} Let $s\in]-1,1[,\;(p_1,p_2,r)\in[1,+\infty]^3$ and $v$ be a divergence free vector field belonging to $L^1_{\loc}(\RR_+;\Lip)$. Then for every smooth solution $a$ of (TD$_\mu$) we have
\begin{equation}\label{maximal-smoothing}
\mu^{\frac{1}{r}}\Vert a\Vert_{\widetilde L^r_t B_{p_1, p_2}^{s+\frac2r}}\le Ce^{CV(t)}(1+\mu t)^{\frac1r}\Big(\Vert a^0\Vert_{B_{p_1, p_2}^{s}}+\Vert f\Vert_{L^1_tB_{p_1, p_2}^{s}}\Big),\quad\forall t\in\RR_+.
\end{equation} 
\end{prop}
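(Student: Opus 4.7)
The plan is to dyadically localize (TD$_\mu$) in frequency and exploit the smoothing of the heat semigroup on each block, following the classical approach of Hmidi-Keraani. Applying $\Delta_q$ produces
\begin{equation*}
\partial_t\Delta_q a + v\cdot\nabla\Delta_q a - \mu\Delta\Delta_q a = \Delta_q f + R_q,\qquad R_q\triangleq[v\cdot\nabla,\Delta_q]a.
\end{equation*}
For $q\ge0$ I would run the standard $L^{p_1}$ energy identity: the divergence-free assumption annihilates the transport contribution, while the frequency-localized positivity bound $-\int(\Delta\Delta_q a)|\Delta_q a|^{p_1-2}\Delta_q a\,dx\gtrsim 2^{2q}\|\Delta_q a\|_{L^{p_1}}^{p_1}$ produces
\begin{equation*}
\frac{d}{dt}\|\Delta_q a\|_{L^{p_1}} + c\mu 2^{2q}\|\Delta_q a\|_{L^{p_1}} \le \|\Delta_q f\|_{L^{p_1}} + \|R_q\|_{L^{p_1}}.
\end{equation*}
Duhamel combined with Young's convolution inequality (using $\|e^{-c\mu 2^{2q}\cdot}\|_{L^r(\RR_+)}\simeq(\mu 2^{2q})^{-1/r}$) then delivers
\begin{equation*}
\mu^{1/r}2^{2q/r}\|\Delta_q a\|_{L^r_tL^{p_1}} \lesssim \|\Delta_q a^0\|_{L^{p_1}} + \|\Delta_q f\|_{L^1_tL^{p_1}} + \|R_q\|_{L^1_tL^{p_1}}.
\end{equation*}

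The low-frequency block $q=-1$ enjoys no dissipative gain, so I would handle it by the brutal interpolation
\begin{equation*}
\mu^{1/r}\|\Delta_{-1}a\|_{L^r_tL^{p_1}} \le (\mu t)^{1/r}\|\Delta_{-1}a\|_{L^\infty_tL^{p_1}},
\end{equation*}
and control the right-hand side by Proposition \ref{prop1}; this single step is precisely the origin of the $(1+\mu t)^{1/r}$ prefactor appearing in the announced estimate.

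After multiplying the dyadic estimates by $2^{qs}$ and taking $\ell^{p_2}$ in $q$, the key ingredient is the classical paradifferential commutator bound
\begin{equation*}
\big\|(2^{qs}\|R_q\|_{L^{p_1}})_q\big\|_{\ell^{p_2}} \lesssim \|\nabla v\|_{L^\infty}\|a\|_{B^s_{p_1,p_2}},
\end{equation*}
valid exactly in the range $s\in]-1,1[$ stated in the proposition. After integrating in time this contributes $\int_0^t\|\nabla v(\tau)\|_{L^\infty}\|a(\tau)\|_{B^s_{p_1,p_2}}d\tau$, and substituting the Besov bound from Proposition \ref{prop1} absorbs it into $Ce^{CV(t)}(\|a^0\|_{B^s_{p_1,p_2}}+\|f\|_{L^1_tB^s_{p_1,p_2}})$, closing the estimate in its announced form.

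The hard part will be controlling $R_q$ without consuming the $2^{2q/r}$ gain: this is what forces the restriction $s\in]-1,1[$ and requires careful bookkeeping between the $L^1_t$ norm on the right and the $L^r_t$ norm on the left via Young's inequality. Two further technical points, namely the $L^{p_1}$ energy estimate at the endpoint $p_1=\infty$ (which needs a semigroup-based variant rather than a pointwise multiplication) and the reassembly through $\ell^{p_2}$, are standard within the Chemin-Lerner formalism and should not cause real difficulties.
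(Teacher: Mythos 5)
Your proposal is correct and follows exactly the standard route: frequency localization, the generalized Bernstein/heat-semigroup decay on each dyadic block, Duhamel plus Young's inequality in time to trade the $L^1_t$ forcing for the $L^r_t$ gain $\mu^{1/r}2^{2q/r}$, a separate low-frequency treatment producing the $(1+\mu t)^{1/r}$ factor, and the $s\in\left]-1,1\right[$ commutator estimate closed by the basic Besov persistence bound of Proposition \ref{prop1}. The paper does not reprove this proposition but cites Hmidi--Keraani \cite{Hmd-Ker}, whose argument is precisely the one you sketch, including the endpoint issues at $p_1\in\{1,\infty\}$ that you correctly flag as requiring the semigroup formulation rather than the pointwise $L^{p_1}$ energy identity.
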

The asymptotic behavior in $L^p-$norm with $p\in [2, \infty]$ of every $(\omega_{\mu},\rho_{\mu})$ solution of (VD$_{\mu}$) is given by the following proposition. To be precise we have:
 
\begin{prop}\label{propasyy1}
Let $(\omega_{\mu},\rho_{\mu})$ be a smooth solution of (VD$_{\mu}$) such that $\rho_0\in L^1\cap L^p$ and $ \omega_0\in L^2\cap L^p$ with $p\in[2,\infty].$ Then  for $t\geq 0$,  
$$
\|\omega_\mu(t)\|_{L^p}+\|\nabla\rho_\mu\|_{L^1_tL^p}\le C_0\log^{2-\frac2p}(1+t).
$$
\end{prop}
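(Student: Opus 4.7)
My plan is to reduce the proof to controlling $\|\nabla\rho_\mu\|_{L^1_tL^p}$, establish the endpoint bounds at $p=2$ and $p=\infty$, and interpolate in between. The density equation in (VD$_\mu$) is a transport-diffusion equation with divergence-free drift, so the classical maximum principle yields $\|\rho_\mu(t)\|_{L^q}\le\|\rho^0\|_{L^q}$ for every $q\in[1,\infty]$. Multiplying the vorticity equation by $|\omega_\mu|^{p-2}\omega_\mu$, integrating, dropping the nonnegative viscous term, and using $\Div v_\mu=0$ gives $\tfrac{d}{dt}\|\omega_\mu\|_{L^p}\le\|\nabla\rho_\mu\|_{L^p}$, hence $\|\omega_\mu(t)\|_{L^p}\le\|\omega^0\|_{L^p}+\|\nabla\rho_\mu\|_{L^1_tL^p}$. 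It therefore suffices to prove $\|\nabla\rho_\mu\|_{L^1_tL^p}\lesssim C_0\log^{2-2/p}(1+t)$.

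For the $p=2$ endpoint I use the $L^2$ energy identity $\|\rho_\mu(t)\|_{L^2}^2+2\|\nabla\rho_\mu\|_{L^2_tL^2}^2=\|\rho^0\|_{L^2}^2$ combined with a Nash-type decay: the preserved $L^1$ norm of $\rho_\mu$ together with the two-dimensional Nash inequality yields $\|\rho_\mu(t)\|_{L^2}\lesssim C_0(1+t)^{-1/2}$. Multiplying $\tfrac{d}{dt}\|\rho_\mu\|_{L^2}^2=-2\|\nabla\rho_\mu\|_{L^2}^2$ by the weight $(1+s)$ and integrating produces $\int_0^t(1+s)\|\nabla\rho_\mu\|_{L^2}^2\,ds\lesssim C_0(1+\log(1+t))$. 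A Cauchy–Schwarz split with weights $(1+s)^{\pm 1/2}$ then gives $\|\nabla\rho_\mu\|_{L^1_tL^2}\lesssim C_0\log(1+t)$.

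The principal obstacle is the $p=\infty$ endpoint $\|\nabla\rho_\mu\|_{L^1_tL^\infty}\lesssim C_0\log^2(1+t)$. The linear heat contribution $\nabla e^{t\Delta}\rho^0$ is uniformly bounded in $L^1_tL^\infty$ since $\rho^0\in L^1\cap L^\infty$ and $\|\nabla e^{\tau\Delta}\|_{L^1\to L^\infty}\lesssim\tau^{-3/2}$ for large $\tau$, $\|\nabla e^{\tau\Delta}\|_{L^\infty\to L^\infty}\lesssim\tau^{-1/2}$ for small $\tau$. The delicate piece is the Duhamel nonlinearity $\int_0^t\nabla^2 e^{(t-s)\Delta}(v_\mu\rho_\mu)\,ds$, which requires controlling $\|v_\mu\|_{L^\infty}$. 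The two-dimensional Biot–Savart bound $\|v_\mu\|_{L^\infty}\lesssim\|\omega_\mu\|_{L^1}^{1/2}\|\omega_\mu\|_{L^\infty}^{1/2}$ then feeds the sought $L^\infty$ growth of $\omega_\mu$ back into the heat-kernel estimate, and a bootstrap compounds one extra logarithm to produce the announced $\log^2$. Equivalently, one may work with the auxiliary function $\Gamma_\mu=(1-\mu)\omega_\mu-\partial_1\Delta^{-1}\rho_\mu$ introduced later by the authors, whose transport-diffusion equation carries only the zeroth-order commutator source $H_\mu=[\partial_1\Delta^{-1},v_\mu\cdot\nabla]\rho_\mu$, thus avoiding the singular $\partial_1\rho_\mu$ forcing directly.

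Finally, for general $p\in[2,\infty)$ I interpolate pointwise in time via $\|\nabla\rho_\mu(s)\|_{L^p}\le\|\nabla\rho_\mu(s)\|_{L^2}^{2/p}\|\nabla\rho_\mu(s)\|_{L^\infty}^{1-2/p}$ and apply Hölder in time with conjugate exponents $p/2$ and $p/(p-2)$ to obtain
\begin{equation*}
\|\nabla\rho_\mu\|_{L^1_tL^p}\le\|\nabla\rho_\mu\|_{L^1_tL^2}^{2/p}\,\|\nabla\rho_\mu\|_{L^1_tL^\infty}^{1-2/p}\lesssim\log^{2/p}(1+t)\cdot\log^{2(1-2/p)}(1+t)=\log^{2-\frac{2}{p}}(1+t),
\end{equation*}
which combined with the vorticity estimate of the first paragraph closes the argument.
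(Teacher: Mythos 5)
The paper does not actually prove this proposition: it is imported from \cite{HZ-poche} (the case $\mu=0$) with the remark that the extra viscous term has the good sign. Your skeleton --- reduce everything to $\|\nabla\rho_\mu\|_{L^1_tL^p}$ via the $L^p$ estimate on the vorticity equation, establish the two endpoints, interpolate --- is the natural one, and two of its three pieces are sound. The $p=2$ endpoint (Nash decay $\|\rho_\mu(t)\|_{L^2}\lesssim C_0(1+t)^{-1/2}$ from conservation of $\|\rho_\mu\|_{L^1}$, then the weighted identity $\tfrac{d}{ds}\big[(1+s)\|\rho_\mu\|_{L^2}^2\big]+2(1+s)\|\nabla\rho_\mu\|_{L^2}^2=\|\rho_\mu\|_{L^2}^2$ and a Cauchy--Schwarz split) is complete and is essentially the argument of \cite{HZ-poche}. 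The final interpolation $\|\nabla\rho_\mu\|_{L^1_tL^p}\le\|\nabla\rho_\mu\|_{L^1_tL^2}^{2/p}\|\nabla\rho_\mu\|_{L^1_tL^\infty}^{1-2/p}$ is a correct application of H\"older in time and does reproduce the exponent $2-\tfrac2p$.

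The genuine gap is the $L^\infty$ endpoint, which is the entire content of the proposition. First, the Duhamel term $\int_0^t\nabla\,\Div\, e^{(t-s)\Delta}(v_\mu\rho_\mu)\,ds$ cannot be estimated in $L^\infty$ as you suggest: $\|\nabla^2e^{\tau\Delta}\|_{L^\infty\to L^\infty}\sim\tau^{-1}$ is not integrable at $\tau=0$, so bounding the integrand by $(t-s)^{-1}\|v_\mu(s)\|_{L^\infty}\|\rho_\mu(s)\|_{L^\infty}$ diverges at $s=t$; this is precisely why \cite{hk,HZ-poche} work with frequency-localized Duhamel formulas and commutator estimates in $\widetilde L^1_tB^1_{\infty,1}$ rather than raw heat-kernel bounds. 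Second, and more seriously, the sentence ``a bootstrap compounds one extra logarithm to produce the announced $\log^2$'' is an assertion in place of the theorem: closing the loop $\|\omega_\mu(t)\|_{L^\infty}\le C_0+\|\nabla\rho_\mu\|_{L^1_tL^\infty}$, $\|\nabla\rho_\mu\|_{L^1_tL^\infty}\lesssim C_0\int_0^t\|v_\mu\|_{L^\infty}\,ds$, $\|v_\mu\|_{L^\infty}\lesssim\|\omega_\mu\|_{L^1}^{1/2}\|\omega_\mu\|_{L^\infty}^{1/2}$ by Gr\"onwall yields polynomial (at best $(1+t)^{C_0}$) growth, not $\log^2(1+t)$; the $\log^2$ requires a smoothing estimate in which the drift enters only logarithmically, and producing it is exactly what is missing. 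The alternative via $\Gamma_\mu$ is likewise only a pointer, and in any case does not by itself control $\|\nabla\rho_\mu\|_{L^1_tL^p}$, which appears on the left-hand side of the claim. Finally, your route consumes $\rho^0,\omega^0\in L^\infty$ (for the endpoint) and $\omega_\mu\in L^1$ (for the Biot--Savart bound), neither of which is granted by the hypotheses for a finite $p$ --- harmless for the vortex-patch application, but not the statement as written.
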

\begin{rema*} This property has been recently accomplished by \cite{HZ-poche} for Stratified Euler equations (B$_{\mu}$), with $\mu=0$. We point out that the proof of such estimate remains available in our case with minor modifications due to the laplacien term, which has the good sign.
\end{rema*}
We end this paragraph by the Calder\'on-Zygmund estimate which constitute a deep statement of harmonic analysis. 
\begin{prop}\label{CZyg} Let $p\in]1, \infty[$ and $v$ be a divergence-free vector field which its vorticity $\omega\in L^p$.  Then $\nabla v\in L^p$ and 
\begin{equation}
\Vert\nabla v\Vert_{L^p}\le c\frac{p^2}{p-1}\Vert\omega\Vert_{L^p},
\end{equation}  
with $c$ being a universal constant.
\end{prop}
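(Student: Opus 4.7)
The plan is to reduce the estimate to a standard Calderón--Zygmund bound on second--order Riesz transforms, and then track the dependence of the operator norm on the integrability exponent $p$.

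First I would exploit the two--dimensional structure. Since $\Div v=0$ and $\curl v=\omega$, a standard computation gives $v=\nabla^\perp\Delta^{-1}\omega$, so that componentwise
\begin{equation*}
\partial_i v^j(x)=\varepsilon^{jk}\,\partial_i\partial_k\Delta^{-1}\omega(x)=(\mathcal R_{ik}\omega)(x),
\end{equation*}
where $\varepsilon^{jk}$ is the antisymmetric symbol and each $\mathcal R_{ik}=\partial_i\partial_k(-\Delta)^{-1}$ is (up to sign) a composition of two Riesz transforms. In Fourier variables $\widehat{\mathcal R_{ik}\omega}(\xi)=-\frac{\xi_i\xi_k}{|\xi|^2}\,\widehat\omega(\xi)$, which is a bounded multiplier, so $\mathcal R_{ik}$ is automatically bounded on $L^2$ with norm at most~$1$.

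Next I would identify $\mathcal R_{ik}$ as a genuine Calderón--Zygmund operator: away from the origin its kernel is a smooth function homogeneous of degree $-2$, with vanishing mean on circles, and it satisfies the Hörmander condition. From the classical Calderón--Zygmund decomposition one then gets a weak-$(1,1)$ estimate with an absolute constant $A$. Combined with the $L^2$ boundedness, Marcinkiewicz interpolation yields, for $1<p\le 2$, a bound of the form
\begin{equation*}
\Vert\mathcal R_{ik}f\Vert_{L^p}\le c\,\frac{p}{p-1}\Vert f\Vert_{L^p}.
\end{equation*}
For $2\le p<\infty$ the same estimate is obtained by duality, using that the formal adjoint of $\mathcal R_{ik}$ is $\mathcal R_{ki}$; this gives $\Vert \mathcal R_{ik}\Vert_{L^p\to L^p}\le c\,p$ in that range. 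Combining both regimes into a single monotone function one can absorb them into the uniform upper bound $c\,\dfrac{p^2}{p-1}$, which matches the statement.

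Applied to $f=\omega$ and summed over the four components $\partial_i v^j$, this yields the announced inequality. The main technical obstacle is really the sharp tracking of the constant: one must use a version of Marcinkiewicz interpolation (or a direct good-$\lambda$ inequality) that gives the precise $1/(p-1)$ blow-up as $p\to 1^+$, and then check that the duality step produces at most a linear factor in $p$ as $p\to\infty$. Once these two explicit constants are secured, reassembling them into $c\,p^2/(p-1)$ is immediate.
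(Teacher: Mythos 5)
Your argument is correct and is the standard textbook proof of this classical estimate; the paper itself states Proposition \ref{CZyg} without proof, simply invoking it as a known result of harmonic analysis. The identification $\partial_i v^j=\varepsilon^{jk}\partial_i\partial_k\Delta^{-1}\omega$, the weak-$(1,1)$ plus $L^2$ bounds for the second-order Riesz transforms, Marcinkiewicz interpolation for $1<p\le2$ and duality for $p\ge2$ indeed yield $\max\big(c/(p-1),\,cp\big)\le c\,p^2/(p-1)$; the only point requiring care, which you correctly flag, is using a form of the interpolation step whose constant does not degenerate at the upper endpoint $p=2$ (e.g.\ interpolating against the strong $L^2$ bound), so that the blow-up is exactly $1/(p-1)$ as $p\to1^+$.
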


\section{Smooth vortex patch problem} In this section we will give a detailed proof for the first main result stated in Theorem \ref{pochetheo}. We will inspire the  general ideas from Chemin's result, we then follow the argument performed more recently by \cite{HZ-poche, Z-poche} for Stratified Euler system. For this aim, we will state the general framework study of the vortex patch problem.

\subsection{Vortex patch tool box}\label{vortex-patch}
Before entering into details of the proof of the Theorem \ref{pochetheo}, we will state a few important ingredients concerning the study of vortex patch problem. We will start by the concept of an admissible family of vector fields and some relates properties, from which we will derive the notion of anisotropic H\"older space. Afterwards, we state {\it the logarithmic estimate} which is a fundamental tool to prove the Lipschitz property of the velocity.   
\begin{defi}\label{definition-1}
Let $\EE\in]0,1[$. A family of vector fields $X=(X_{\lambda})_{\lambda\in\Lambda}$ is said to be admissible if and only if the following assertions are hold.\\
\begin{itemize}
\item Regularity:
$$\forall\lambda\in \Lambda\quad X_{\lambda}, \Div X_{\lambda}\in C^{\epsilon}.$$
\item Non-degeneray:
\begin{equation}\label{admissibility}
I(X)\triangleq\inf_{x\in\RR^d}\sup_{\lambda\in\Lambda}\big|X_{\lambda}(x)\big|>0.
\end{equation}
\end{itemize}

Setting
\begin{equation}\label{norm}
\widetilde{\|}X_{\lambda}\|_{C^{\EE}}\triangleq\|X_{\lambda}\|_{C^{\EE}}+\|\textnormal{div}X_{\lambda}\|_{C^{\EE}}.
\end{equation}
\end{defi}
\begin{defi} Let $X=(X_{\lambda})_{\lambda\in\Lambda}$ be an admissible family. The action of each factor $X_\lambda$  on $u\in L^\infty$ is defined as the directional derivative of $u$ along $X_\lambda$ by the formula,
\begin{equation*}
\partial_{X_{\lambda}}u=\textnormal{div}(u X_{\lambda})-u\textnormal{div}X_{\lambda}.
\end{equation*}

\end{defi}
The concept of anisotropic H\"{o}lder space, will be noting by $C^\EE(X)$ is defined below. 
\begin{defi}\label{definition}
Let $\EE\in]0,1[$ and  $X$ be an admissible family of vector fields. We say that $u\in C^{\EE}(X)$ if and only if:\\
\begin{itemize}
\item $u\in L^\infty$ and satisfies
\begin{equation*}
\forall\,\lambda\in\Lambda,\; \partial_{X_{\lambda}}u\in
C^{\EE-1}, \quad\sup_{\lambda\in \Lambda}\|\partial_{X_{\lambda}}u\|_{C^{\EE-1}}<+\infty.
\end{equation*}
\item $C^\EE(X)$ is a normed space with
\begin{equation*}
\|u\|_{C^{\EE}(X)}\triangleq\frac{1}{I(X)}\Big(\|u\|_{L^{\infty}}\sup_{\lambda\in\Lambda}\widetilde{\|}X_{\lambda}\|_{C^{\EE}}+\sup_{\lambda\in\Lambda}\|\partial_{X_{\lambda}}u\|_{C^{\EE-1}}\Big).
\end{equation*}
\end{itemize}
\end{defi}

Now, let us take an initial family of vector-field $X_0=(X_{0,\lambda})_{\lambda\in\Lambda}$ and define its time evolution $X_t=\big(X_{t,\lambda})_{\lambda\in\Lambda}$ by
\begin{equation}\label{X-evolution}
X_{t,\lambda}(x)\triangleq X_{0,\lambda}\Psi(t, \Psi^{-1}(t,x)),
\end{equation}
that is $X_t$ is the vector-field $X_0$ transported by the flow $\Psi$ associated to $v$. From this definition the evolution family $X_t$ satisfies the following transport equation.
\begin{prop}\label{pro123}
Let  $v$  be a Lipschitzian vector-field, $\Psi$ its flow and $X_t=(X_{t,\lambda})_{\lambda\in\Lambda}$ is the family defined by \eqref{X-evolution}. Then the following equation holds true.
\begin{equation}\label{hoo}
\left\{\begin{array}{ll}
(\partial_t+v\cdot\nabla)X_{t,\lambda}=\partial_{X_{t,\lambda}}v & \textrm{if $(t,x)\in\RR_+\times\RR^2$}\\
X_{t,\lambda| t=0}=X_{0,\lambda}.
\end{array}
\right.
\end{equation}
\end{prop}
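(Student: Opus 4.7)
The strategy is to differentiate the Lagrangian definition \eqref{X-evolution} in time and read off the transport equation via the chain rule. First I would rewrite \eqref{X-evolution} in the equivalent pushforward form
\begin{equation*}
X_{t,\lambda}(\Psi(t,y))=D\Psi(t,y)\,X_{0,\lambda}(y),\qquad y\in\RR^2,
\end{equation*}
which is the intrinsic way to transport a vector field along a diffeomorphism. Since $v\in \Lip$, the Cauchy--Lipschitz theorem guarantees that $\Psi(t,\cdot)$ is a bi-Lipschitz diffeomorphism and that the Jacobian matrix $D\Psi(t,y)$ satisfies the linear matrix ODE $\partial_t D\Psi=(\nabla v)(t,\Psi)\cdot D\Psi$ with $D\Psi(0,y)=I$.

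Next I would differentiate both sides at a fixed Lagrangian point $y$. On the right-hand side, using the matrix ODE for $D\Psi$,
\begin{equation*}
\partial_t\bigl(D\Psi(t,y)\,X_{0,\lambda}(y)\bigr)=(\nabla v)(t,\Psi(t,y))\,D\Psi(t,y)\,X_{0,\lambda}(y)=(\nabla v)(t,\Psi(t,y))\,X_{t,\lambda}(\Psi(t,y)).
\end{equation*}
On the left-hand side, the chain rule together with $\partial_t\Psi=v(t,\Psi)$ yields
\begin{equation*}
\partial_t\bigl(X_{t,\lambda}(\Psi(t,y))\bigr)=\bigl(\partial_t X_{t,\lambda}+v\cdot\nabla X_{t,\lambda}\bigr)(t,\Psi(t,y)).
\end{equation*}
Equating the two and using that $\Psi(t,\cdot):\RR^2\to\RR^2$ is surjective gives $(\partial_t+v\cdot\nabla)X_{t,\lambda}=X_{t,\lambda}\cdot\nabla v$ everywhere. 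Since the directional derivative acting componentwise on $v$ is precisely $\partial_{X_{t,\lambda}}v=X_{t,\lambda}\cdot\nabla v$ (as one checks from the formula $\partial_{X_\lambda}u=\textnormal{div}(uX_\lambda)-u\,\textnormal{div}\,X_\lambda$ applied to each coordinate of $v$ and using $\Div v=0$ is \emph{not} needed here), this is exactly \eqref{hoo}. The initial condition $X_{t,\lambda}|_{t=0}=X_{0,\lambda}$ follows from $\Psi(0,\cdot)=\textnormal{Id}$.

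The computation itself is routine, and I do not expect any genuine obstacle. The only point requiring a modicum of care is the justification of the matrix ODE for $D\Psi$ under the mere Lipschitz hypothesis on $v$: one may either invoke the classical Rademacher-type differentiability of the flow of a Lipschitz field, or alternatively regularize $v$ by mollification, establish \eqref{hoo} for the smooth approximations, and then pass to the limit in the distributional sense, which is enough for the subsequent use of the proposition in the proof of Theorem \ref{pochetheo}.
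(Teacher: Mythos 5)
Your proof is correct, and it is the standard Lagrangian computation: rewriting \eqref{X-evolution} as $X_{t,\lambda}(\Psi(t,y))=D\Psi(t,y)X_{0,\lambda}(y)$, differentiating in $t$ using $\partial_t D\Psi=(\nabla v)(t,\Psi)D\Psi$, and identifying $(\nabla v)X_{t,\lambda}=(X_{t,\lambda}\cdot\nabla)v=\partial_{X_{t,\lambda}}v$. The paper states this proposition without proof (it is part of Chemin's classical vortex-patch framework), and your argument, including the remark on justifying the matrix ODE for $D\Psi$ by mollification when $v$ is only Lipschitz, is exactly the intended one.
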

In order to prove the Theorem \ref{pochetheo}, we state the following stationnary logarithmic estimate initially introduced by Chemin \cite{che1}. More precisely,
\begin{Theo}\label{ttt}
Let $\EE\in]0,\,1[$ and $X=(X_\lambda)_{\lambda\in\Lambda}$ be a family of vector fields as in Definition \ref{definition-1}. Let $v$ be a divergence-free vector field such that its vorticity $\omega$ belongs to 
$L^2\cap C^{\EE}(X).$ Then there exists a constant $C$ depending only on $\EE,$ such that 
\begin{equation}\label{5554}
\|\nabla v\|_{L^{\infty}}\leq C\Bigg(\|\omega\|_{L^2}+{\|\omega\|_{L^{\infty}}}\textnormal{log}\bigg(e+\frac{\|\omega\|_{C^{\EE}(X)}}{\|\omega\|_{L^{\infty}}}\bigg)\Bigg).
\end{equation}
\end{Theo}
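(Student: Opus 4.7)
The proof will follow Chemin's classical strategy based on a frequency-dependent splitting of $\nabla v$ and a careful balance between intermediate and high-frequency contributions. The plan is to fix an integer $N \geq 0$, to be optimized later, and to write
$$
\nabla v = \Delta_{-1}\nabla v \;+\; \sum_{0 \leq q \leq N}\Delta_{q}\nabla v \;+\; \sum_{q > N}\Delta_{q}\nabla v,
$$
then estimate each of the three pieces in $L^{\infty}$ separately before choosing $N$.

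For the low-frequency block, the Biot--Savart law $v=\nabla^{\perp}\Delta^{-1}\omega$ combined with Bernstein's inequality (Lemma \ref{Bernstein}) gives $\|\Delta_{-1}\nabla v\|_{L^{\infty}} \lesssim \|\omega\|_{L^{2}}$. For the intermediate block, on each dyadic shell $q\ge 0$ the Riesz-type operator $\nabla\otimes\nabla^{\perp}\Delta^{-1}$ acts boundedly at fixed frequency, so $\|\Delta_{q}\nabla v\|_{L^{\infty}} \lesssim \|\Delta_{q}\omega\|_{L^{\infty}}\lesssim \|\omega\|_{L^{\infty}}$ uniformly in $q$, and summing over $0\le q\le N$ contributes at most $(N+1)\|\omega\|_{L^{\infty}}$.

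The heart of the proof is the high-frequency tail, where the anisotropic conormal regularity must be converted into isotropic Besov decay. The target is the estimate
$$
\|\Delta_{q}\omega\|_{L^{\infty}} \;\lesssim\; 2^{-q\EE}\,\|\omega\|_{C^{\EE}(X)}, \qquad q \geq 0,
$$
which, after summation over $q>N$, produces a contribution bounded by $2^{-N\EE}\|\omega\|_{C^{\EE}(X)}$. To derive it, the idea is to use the identity $\partial_{X_{\lambda}}\omega = \Div(\omega X_{\lambda}) - \omega\,\Div X_{\lambda}$ in order to re-express $X_{\lambda}\cdot\nabla\Delta_{q}\omega$ as $\Delta_{q}(\partial_{X_{\lambda}}\omega)$ up to a commutator term that, via the Bony paraproduct decomposition applied to the product $\omega X_{\lambda}$, is controlled by $2^{-q\EE}\|\omega\|_{L^{\infty}}\widetilde{\|}X_{\lambda}\|_{C^{\EE}}$. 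Since $\partial_{X_{\lambda}}\omega\in C^{\EE-1}$, Bernstein gives $\|\Delta_{q}(\partial_{X_{\lambda}}\omega)\|_{L^{\infty}}\lesssim 2^{q(1-\EE)}\|\omega\|_{C^{\EE}(X)}$, and the admissibility condition $I(X)>0$ then permits pointwise recovery of $|\nabla\Delta_{q}\omega|$ from the collection $\{X_{\lambda}\cdot\nabla\Delta_{q}\omega\}_{\lambda}$; one Bernstein step converting $\|\nabla\Delta_{q}\omega\|_{L^{\infty}}$ into $\|\Delta_{q}\omega\|_{L^{\infty}}$ then yields the announced bound.

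Putting the three pieces together gives
$$
\|\nabla v\|_{L^{\infty}} \;\lesssim\; \|\omega\|_{L^{2}} + (N+1)\,\|\omega\|_{L^{\infty}} + 2^{-N\EE}\,\|\omega\|_{C^{\EE}(X)},
$$
and the logarithm in \eqref{5554} emerges by choosing
$$
N \;\sim\; \frac{1}{\EE}\log\!\bigg(e + \frac{\|\omega\|_{C^{\EE}(X)}}{\|\omega\|_{L^{\infty}}}\bigg),
$$
which balances the last two terms. The main obstacle is the high-frequency estimate: the space $C^{\EE}(X)$ only controls derivatives along the distinguished family $X$, so extracting the isotropic $2^{-q\EE}$-decay requires the paraproduct/commutator analysis outlined above together with the nondegeneracy $I(X)>0$ used to invert the collection $\{X_{\lambda}\cdot\nabla\}$ and recover the full gradient. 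Once that technical step is in place, the rest of the argument is a clean frequency optimization.
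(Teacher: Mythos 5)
Your overall architecture --- the three-block Littlewood--Paley splitting of $\nabla v$, the bound $\|\Delta_{-1}\nabla v\|_{L^\infty}\lesssim\|\omega\|_{L^2}$ for low frequencies, the $(N+1)\|\omega\|_{L^\infty}$ contribution of the intermediate block, and the final optimization of $N$ producing the logarithm --- is exactly the skeleton of Chemin's argument, which is what the paper invokes by citation rather than reproving. The problem is the step you yourself identify as "the heart of the proof". The claimed high-frequency estimate
$$
\|\Delta_q\omega\|_{L^\infty}\lesssim 2^{-q\EE}\,\|\omega\|_{C^\EE(X)},\qquad q\ge 0,
$$
cannot be true: it is precisely the statement that $\omega\in B^{\EE}_{\infty,\infty}=C^{\EE}$, hence that $\omega$ is H\"older continuous. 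But the entire purpose of the theorem is to apply it to $\omega={\bf 1}_{\Omega_0}$ with $X$ tangent to $\partial\Omega_0$, for which $\omega\in C^{\EE}(X)$ while $\|\Delta_q{\bf 1}_{\Omega_0}\|_{L^\infty}$ stays of order one as $q\to\infty$. The mechanism by which your derivation breaks down is the "inversion" step: the non-degeneracy condition \eqref{admissibility} guarantees, at each point $x$, only \emph{one} index $\lambda$ with $|X_\lambda(x)|$ bounded below, i.e.\ one linear functional of $\nabla\Delta_q\omega(x)$, not a spanning set of directions. For a family tangent to a curve no two members can be linearly independent on the curve, so the full gradient of $\omega$ is genuinely not controlled --- only the derivative along $X$ is, and that is strictly weaker information.

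The correct route (Chemin \cite{che1}; see also Chapter 7 of \cite{HCD}) does not attempt to give $\omega$ any isotropic regularity. It exploits the specific structure of $\nabla v=\nabla\nabla^{\perp}\Delta^{-1}\omega$: in dimension two the Hessian of $\Delta^{-1}\omega$ has trace $\omega$, which is already bounded by $\|\omega\|_{L^\infty}$, so one only needs the remaining entries. At a given point $x_0$ one picks $\lambda$ with $|X_\lambda(x_0)|\ge I(X)$ and uses the identity
$$
X_\lambda\cdot\nabla\big(\partial_j\Delta^{-1}\omega\big)=\partial_j\Delta^{-1}\big(\partial_{X_\lambda}\omega\big)+\big[X_\lambda\cdot\nabla,\partial_j\Delta^{-1}\big]\omega ,
$$
then divides by the non-vanishing component of $X_\lambda(x_0)$ to extract the missing second derivatives. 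Since $\partial_{X_\lambda}\omega\in C^{\EE-1}$ and $\partial_j\Delta^{-1}$ is of order $-1$, the first term lies in $C^{\EE}$ with norm $\lesssim\|\omega\|_{C^\EE(X)}\,\widetilde{\|}X_\lambda\|_{C^\EE}$, and the commutator is handled by Bony's decomposition with bounds involving only $\|\omega\|_{L^\infty}\widetilde{\|}X_\lambda\|_{C^\EE}$. It is to \emph{these} quantities (the Riesz-transform side, not $\omega$ itself) that your three-block splitting is applied, each dyadic block of the commutator and of the low/intermediate frequencies costing $\|\omega\|_{L^\infty}$ and the tail costing $2^{-N\EE}\|\omega\|_{C^\EE(X)}$; the normalization by $I(X)$ and $\widetilde{\|}X_\lambda\|_{C^\EE}$ built into the definition of $\|\cdot\|_{C^\EE(X)}$ is what makes the final constant depend only on $\EE$. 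Your concluding optimization of $N$ is then correct, but as written the proof has a genuine gap at its central step.
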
 
We shall now make precise to the boundary regularity and the tangent space used in the proof of Theorem \ref{pochetheo}.
\begin{defi}\label{Char} Let $\EE>0$.
\begin{enumerate}
\item A closed hypersurface $\Sigma$ is said to be $C^{1+\EE}-$regular if there exists $f\in C^{1+\EE}(\RR^2)$ such that $\Sigma$ is a locally zero sets of $f$, i.e., there exists a neighborhood $V$ of $\Sigma$ such that
\begin{equation}\label{Reg-Om}
\Sigma=f^{-1}\{0\}\cap V,\quad \nabla f(x)\ne 0\quad\forall x\in V.
\end{equation}   
\item A vector field $X$ with $C^\EE-$regularity is said to be tangent to $\Sigma$ if $X\cdot\nabla f_{|\Sigma}=0$. The set of such vector field will be noted by $\mathcal{T}_{\Sigma}^{\EE}$.
\end{enumerate}
\end{defi}  
Given a compact hypersurface $\Sigma$ of the class $C^{1+\EE},\;0<\EE<1$. The co-normal space  $C^{\EE}_{\Sigma}$ associated  to $\Sigma$ is defined by
$$
C^{\EE}_{\Sigma}\triangleq\{u\in L^\infty(\RR^2); \forall X\in\mathcal{T}^{\EE}_{\Sigma},\;\Div(Xu)\in C^{\EE-1}\}.
$$
The following Danchin's result stated in \cite{D-Per}, showing that $C^{\EE}_{\Sigma}$ contains the characteristic function of a bounded open domain surruonded by the hypersurface $\Sigma$. More generally we have: 
\begin{prop} Let $\Omega_0$ be $C^{1+\EE}-$bounded domain, with $0<\EE<1$. Then for every function $f\in C^\EE$, we have 
\begin{equation*}
f{\bf 1}_{\Omega_0}\in C^{\EE}_{\Sigma}.
\end{equation*}

\end{prop}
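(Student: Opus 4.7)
The plan is to reduce the statement directly to the case $f\equiv 1$, i.e.\ to Danchin's result quoted immediately above.

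First I would fix an arbitrary test vector field $X\in\mathcal{T}_{\Sigma}^{\EE}$ and introduce the auxiliary vector field $Y:=fX$. Two routine checks show that $Y$ itself belongs to $\mathcal{T}_{\Sigma}^{\EE}$. On the one hand, since $C^{\EE}$ is a Banach algebra (being $B^{\EE}_{\infty,\infty}$, which embeds continuously into $L^{\infty}$ for $\EE>0$), the componentwise product $Y=fX$ lies in $C^{\EE}$ with $\|Y\|_{C^{\EE}}\lesssim\|f\|_{C^{\EE}}\|X\|_{C^{\EE}}$. On the other hand, if $g\in C^{1+\EE}$ is a local defining function for $\Sigma$ as in Definition~\ref{Char}, then along $\Sigma$ one has $Y\cdot\nabla g=f\,(X\cdot\nabla g)=0$, so $Y$ is tangent to $\Sigma$. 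Hence $Y\in\mathcal{T}_{\Sigma}^{\EE}$.

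Next, the pointwise identity $Xf\mathbf{1}_{\Omega_0}=Y\mathbf{1}_{\Omega_0}$ immediately gives the distributional identity
$$\Div(Xf\mathbf{1}_{\Omega_0})=\Div(Y\mathbf{1}_{\Omega_0}).$$
Applying Danchin's result, $\mathbf{1}_{\Omega_0}\in C^{\EE}_{\Sigma}$, to the tangent $C^{\EE}$ vector field $Y$ then yields $\Div(Y\mathbf{1}_{\Omega_0})\in C^{\EE-1}$. Together with the trivial bound $\|f\mathbf{1}_{\Omega_0}\|_{L^{\infty}}\le\|f\|_{L^{\infty}}<\infty$ and the arbitrariness of $X\in\mathcal{T}_{\Sigma}^{\EE}$, this gives $f\mathbf{1}_{\Omega_0}\in C^{\EE}_{\Sigma}$ by Definition~\ref{Char} and the definition of $C^{\EE}_{\Sigma}$.

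The only mildly subtle step in this plan is recognising that the class $\mathcal{T}_{\Sigma}^{\EE}$ is stable under multiplication by a $C^{\EE}$-scalar function. Once this is observed, no estimate on delicate distributional products such as $X\cdot\nabla f$ — which for generic $\EE\in(0,1)$ would be out of reach of Bony's remainder unless $\EE>\tfrac12$ — ever appears, because the tangency of $fX$ is inherited from that of $X$ without any need to expand $\Div(fX)$. In this sense the generalisation from $f\equiv 1$ to $f\in C^{\EE}$ is essentially cost-free.
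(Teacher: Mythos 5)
Your reduction is correct as far as the paper's own definitions go, but note first that the paper does not actually prove this proposition at all: it is quoted verbatim from Danchin \cite{D-Per}, so there is no in-text argument to compare against. What you supply is a genuine (and clean) derivation of the general statement from the special case $f\equiv 1$, which the surrounding text explicitly grants as ``Danchin's result''. The two pillars of your reduction are sound under Definition \ref{Char}: since $\mathcal{T}_{\Sigma}^{\EE}$ is defined there only by $C^{\EE}$ regularity of the components plus tangency $X\cdot\nabla g_{|\Sigma}=0$, the field $Y=fX$ does belong to $\mathcal{T}_{\Sigma}^{\EE}$ ($C^{\EE}$ is an algebra for $\EE>0$, and $Y\cdot\nabla g=f\,(X\cdot\nabla g)$ is a genuine pointwise product of continuous functions vanishing on $\Sigma$), and the distributional identity $\Div\big(X\,f{\bf 1}_{\Omega_0}\big)=\Div\big(Y\,{\bf 1}_{\Omega_0}\big)$ is immediate because both sides are the divergence of the same $L^\infty$ vector field. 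Applying the case $f\equiv1$ to $Y$ then closes the argument.

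Two caveats you should make explicit. First, the entire analytic content is concentrated in the base case ${\bf 1}_{\Omega_0}\in C^{\EE}_{\Sigma}$, which you do not prove and which is itself the hard part of Danchin's theorem (the naive decomposition $\Div(X{\bf 1}_{\Omega_0})={\bf 1}_{\Omega_0}\Div X-(X\cdot\nu)\,d\sigma$ is only formal for $X\in C^{\EE}$); your argument shows the passage from ${\bf 1}_{\Omega_0}$ to $f{\bf 1}_{\Omega_0}$ is cost-free, nothing more. Second, your reduction is sensitive to the exact definition of $\mathcal{T}_{\Sigma}^{\EE}$: in several places in the literature (and in Definition \ref{definition-1} of this paper for admissible families) tangent fields are additionally required to satisfy $\Div X\in C^{\EE}$. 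Under that stronger convention $Y=fX$ would need $\Div(fX)=f\,\Div X+X\cdot\nabla f\in C^{\EE}$, and the term $X\cdot\nabla f$ is exactly the ill-defined product you mention, out of reach of Bony's remainder when $\EE\le\tfrac12$. You correctly observe that the paper's stated definition avoids this, but since the base case is being imported from \cite{D-Per}, you must check that Danchin's theorem is indeed proved for the weaker class of tangent fields (no divergence hypothesis) before the reduction can be invoked; otherwise the argument has a hidden gap for small $\EE$.
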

According to the previous Proposition, we strive to give a general version of  the Theorem \ref{pochetheo} which allows to deal with more general structures than the vortex patches. Thus we have: 
\begin{Theo}\label{theo7}
Let $0<\EE<1, X_0$ be a family of admissible vector fields and $v_{\mu}^0$ be a free-divergence vector field such that $\omega^0_{\mu}\in L^2\cap C^{\EE}(X_0).$ Let $\rho^0_{\mu}\in L^1\cap L^\infty$, then for $\mu\in]0,1[$ the system (B$_{\mu}$) admits a unique global solution $(v_\mu,\rho_\mu)\in L^\infty_{loc}(\RR_+; Lip)\times L^\infty(\RR_+; L^{1}\cap L^\infty)$. More precisely:
\begin{equation}
\|\nabla v_{\mu}(t)\|_{L^\infty}\le C_0 e^{C_0 t\log^2(1+t)}.
\end{equation}
Furthermore, we have:
$$
\|\omega_{\mu}(t)\|_{C^\EE(X_t)}+\|\psi_{\mu}(t)\|_{C^\EE(X_t)}\le C_0e^{\exp\{C_0t\log^2(2+t)\}}.
$$

\end{Theo}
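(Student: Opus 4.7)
I would follow Chemin's strategy of controlling the Lipschitz norm of the velocity via the logarithmic estimate \eqref{5554} applied to the conormal H\"older regularity of $\omega_\mu$. The global existence and uniqueness in the Yudovich-type class is not the main issue: it follows from Proposition \ref{propasyy1} (which gives $\omega_\mu \in L^\infty_{loc}(\mathbb{R}_+; L^2\cap L^\infty)$ with a logarithmic-in-time bound) together with a standard viscosity approximation and Osgood-type uniqueness argument. The real content is to propagate $\omega_\mu \in C^{\varepsilon}(X_t)$ uniformly in $\mu\in(0,1)$, so that \eqref{5554} yields the double-exponential-free bound $\|\nabla v_\mu(t)\|_{L^\infty}\le C_0 e^{C_0 t\log^2(1+t)}$.

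\textbf{Reduction via the coupled function.} I would work with $\Gamma_\mu\triangleq (1-\mu)\omega_\mu-\partial_1\Delta^{-1}\rho_\mu$, which satisfies
\begin{equation*}
\partial_t\Gamma_\mu+v_\mu\cdot\nabla\Gamma_\mu-\mu\Delta\Gamma_\mu=H_\mu,\qquad H_\mu\triangleq[\partial_1\Delta^{-1},v_\mu\cdot\nabla]\rho_\mu,
\end{equation*}
so that the problematic forcing $\partial_1\rho_\mu$ disappears and is replaced by a commutator which is morally of order $0$ in $\rho_\mu$. Next I would transport the admissible family by $\Psi_\mu$ via \eqref{X-evolution}. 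Applying $\partial_{X_{t,\lambda}}$ (i.e. $\textnormal{div}(\cdot X_{t,\lambda})-(\cdot)\textnormal{div}X_{t,\lambda}$) to the equation and using the commutation identity $[\partial_t+v_\mu\cdot\nabla,\partial_{X_{t,\lambda}}]=0$ (a direct consequence of \eqref{hoo}) yields
\begin{equation*}
(\partial_t+v_\mu\cdot\nabla-\mu\Delta)\partial_{X_{t,\lambda}}\Gamma_\mu=-\mu[\Delta,X_{t,\lambda}]\Gamma_\mu+\partial_{X_{t,\lambda}}H_\mu .
\end{equation*}
Analogously, $X_{t,\lambda}$ itself satisfies a transport equation with right-hand side $\partial_{X_{t,\lambda}}v_\mu$, and $\textnormal{div}X_{t,\lambda}$ is transported by $v_\mu$.

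\textbf{Main estimates.} I would apply Proposition \ref{prop1} in $B^{\varepsilon-1}_{\infty,\infty}=C^{\varepsilon-1}$ to $\partial_{X_{t,\lambda}}\Gamma_\mu$ and in $C^\varepsilon$ to $(X_{t,\lambda},\textnormal{div}X_{t,\lambda})$. The two terms on the right-hand side are the heart of the matter. For the viscous commutator $[\Delta,X_{t,\lambda}]\Gamma_\mu=2\textnormal{div}(\nabla X_{t,\lambda}\cdot\nabla\Gamma_\mu)-\Delta(\textnormal{div}X_{t,\lambda})\Gamma_\mu -\cdots$, the standard apparent loss of derivatives is absorbed in the Danchin--Hmidi style by using the $\mu^{1/r}$-gain of Proposition \ref{Persistance}: the smoothing effect supplies exactly the two derivatives the commutator asks for, so that $\mu\|[\Delta,X_{t,\lambda}]\Gamma_\mu\|_{L^1_t C^{\varepsilon-1}}$ is controlled by $\|X_t\|_{C^\varepsilon}$, the density bounds, and $V_\mu(t)\triangleq\int_0^t\|\nabla v_\mu\|_{L^\infty}$. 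For $\partial_{X_{t,\lambda}}H_\mu$, the Riesz operator $\partial_1\Delta^{-1}$ is of order $-1$; writing the commutator in Bony form and using the $L^\infty\cap L^p$ bound on $\rho_\mu$ (transported and diffused) together with the composition $\partial_{X_{t,\lambda}}\circ[\partial_1\Delta^{-1},v_\mu\cdot\nabla]$ estimated as in \cite{HZ-poche}, one gains zero net derivatives on $\rho_\mu$, which is exactly what is needed to close the loop in $C^{\varepsilon-1}$. The remaining pieces, namely $\|\omega_\mu\|_{L^\infty}$ and $\|\nabla\rho_\mu\|_{L^1_tL^p}$, are provided by Proposition \ref{propasyy1} with logarithmic growth.

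\textbf{Closing the argument.} Collecting these estimates, and recovering $\omega_\mu=(1-\mu)^{-1}(\Gamma_\mu+\partial_1\Delta^{-1}\rho_\mu)$ with the corresponding decomposition in $C^\varepsilon(X_t)$, I would obtain a closed differential inequality of the form
\begin{equation*}
\mathcal{A}_\mu(t)\le C_0+C_0\int_0^t\bigl(\|\nabla v_\mu(\tau)\|_{L^\infty}+\log(1+\tau)\bigr)\mathcal{A}_\mu(\tau)\,d\tau,
\end{equation*}
where $\mathcal{A}_\mu(t)\triangleq\|\omega_\mu(t)\|_{C^\varepsilon(X_t)}+\|\psi_\mu(t)\|_{C^\varepsilon(X_t)}+\|X_t\|_{C^\varepsilon}$. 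Feeding this into the logarithmic estimate \eqref{5554}, using $\|\omega_\mu(t)\|_{L^\infty}\lesssim\log^2(1+t)$, one obtains
\begin{equation*}
\|\nabla v_\mu(t)\|_{L^\infty}\le C_0\log^2(1+t)\bigl(1+\log^+\!\mathcal{A}_\mu(t)\bigr),
\end{equation*}
and the Gronwall lemma applied to $\log\mathcal{A}_\mu$ then yields simultaneously the single-exponential bound for $\|\nabla v_\mu\|_{L^\infty}$ and the double-exponential bound for $\mathcal{A}_\mu$. The main obstacle I anticipate is the rigorous treatment of $\mu[\Delta,X_{t,\lambda}]\Gamma_\mu$: the balance between the $\mu$-prefactor, the two derivatives inside the commutator, and the maximal parabolic gain from Proposition \ref{Persistance} has to be done carefully in mixed Chemin--Lerner spaces $\widetilde L^r_t B^{s+2/r}_{p_1,p_2}$ so that no factor of $\mu^{-1}$ survives; all other pieces are essentially inviscid-type arguments borrowed from \cite{HZ-poche}.
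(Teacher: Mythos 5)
Your proposal is correct and follows essentially the same route as the paper: the coupled function $\Gamma_\mu=(1-\mu)\omega_\mu-\partial_1\Delta^{-1}\rho_\mu$, the transported family $X_t$, the splitting of $\mu[\Delta,X_{t,\lambda}]\Gamma_\mu$ into a low-order paraproduct part and a remainder absorbed by the maximal smoothing effect of Proposition \ref{Persistance}, the commutator estimate for $[\partial_1\Delta^{-1},v_\mu\cdot\nabla]\rho_\mu$ borrowed from \cite{HZ-poche}, and the final Gronwall argument on the logarithmic estimate. The only difference is presentational: the paper makes the $F+\mu G$ Bony decomposition of the viscous commutator explicit and estimates $F$ in $L^\infty_tC^{\varepsilon-3}$ and $G$ via $\mu\|\Gamma\|_{\widetilde L^1_tB^2_{\infty,\infty}}$, which is exactly the careful balance you flag as the anticipated obstacle.
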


\begin{proof} The most difficult point in such proof is to estimate suitably the quantity $\omega_\mu$ in $C^{\EE}(X_t)$ norm. For this aim, we shall use the following coupled function $\Gamma_\mu$ defined by $\Gamma_\mu=(1-\mu)\omega_{\mu}-\mathcal{L}\rho_\mu$, with $\mathcal{L}=\partial_1\Delta^{-1}$. After few computations, we obtain that $\Gamma_{\mu}$ evolves the following inhomogenous transport-diffusion equation: 

\begin{equation*}\label{Gamma0}
(\partial_t+v_{\mu}\cdot\nabla-\mu\Delta)\Gamma_{\mu}=[\mathcal{L}, v_{\mu}\cdot\nabla]\rho_{\mu}.
\end{equation*}
To simplify the notations in what follows, we temporarily suppress the viscosity parameter $\mu$. \\In vertu of \eqref{hoo} of the Proposition \ref{pro123}, one can check that the quantity $\partial_{X_{t,\lambda}}\Gamma$ satisfies the equation,
\begin{equation}\label{Youyou0}
(\partial_t+v\cdot\nabla-\mu\Delta)\partial_{X_{t,\lambda}}\Gamma=X_{t,\lambda}\{[\mathcal{L},v\cdot\nabla ]\rho\}-\mu[\Delta, X_{t,\lambda}]\Gamma.
\end{equation}
According to \cite{danchinpoche,hmidipoche}, the commutator $[\Delta, X_{t,\lambda}]$ can be decomposed as the sum of two terms in the following way:
\begin{equation*}
\mu[\Delta, X_{t,\lambda}]\Gamma=F+\mu G,
\end{equation*}

with

\begin{equation*}
F\triangleq 2\mu T_{\nabla X^i_{t,\lambda}}\partial_i\nabla\Gamma+2\mu T_{\partial_i\nabla\Gamma}\nabla X^i_{t,\lambda}+\mu T_{\Delta X^i_{t,\lambda}}\partial_i\Gamma+ \mu T_{\partial_i\Gamma}\Delta X^i_{t,\lambda}.
\end{equation*}
and

\begin{equation*}
G\triangleq2\mathscr{R}(\nabla X^i_{t,\lambda},\partial_i\nabla\Gamma)+\mathscr{R}(\Delta X^i_{t,\lambda},\partial_i\Gamma).
\end{equation*}
Here, we have used Enstein's convention for the summation over the reapeted indexes. Thus the equation \eqref{Youyou0} takes the following form,

\begin{equation*}\label{Youyou1000}
(\partial_t+v\cdot\nabla-\mu\Delta)\partial_{X_{t,\lambda}}\Gamma=X_{t,\lambda}\{[\mathcal{L},v\cdot\nabla ]\rho\}-(F+\mu G),
\end{equation*}
Applying Theorem 3.38 page 162 in \cite{HCD}, one gets 
\begin{eqnarray}\label{MHOAAA}
\big\|\partial_{X_{\lambda}}\Gamma\big\|_{L^\infty_t C^{\EE-1}}&\le& Ce^{CV(t)}\Big(\|\partial_{X_{0,\lambda}}\Gamma^0\|_{C^{\EE-1}}+\|\partial_{X_{\lambda}}\{\big[\mathcal{L},v\cdot\nabla]\rho\}\|_{ L^1_t C^{\EE-1}}\\
\nonumber&&+(1+\mu t)\|F\|_{L^\infty_t C^{\EE-3}}+\mu\Vert G\Vert_{\widetilde L^1_t C^{\EE-1}}\Big).
\end{eqnarray}
Recall from \cite{HCD, hmidipoche} the following two inequalities
\begin{equation*}
\Vert F\Vert_{L^\infty_t C^{\EE-3}}\le C\Vert\Gamma\Vert_{L^\infty_t L^\infty}\Vert X_{\lambda}\Vert_{L^\infty_t C^{\EE}}.
\end{equation*}
and
\begin{equation*}
\Vert G\Vert_{\widetilde L^1_t C^{\EE-1}}\le C\Vert\Gamma\Vert_{\widetilde L^1_t B^2_{\infty,\infty}}\Vert X_{\lambda}\Vert_{L^\infty_t C^{\EE}}. 
\end{equation*}
Combining with \eqref{MHOAAA}, one finds
\begin{eqnarray}\label{Essential}
\big\|\partial_{X_{\lambda}}\Gamma\big\|_{L^\infty_t C^{\EE-1}}&\le& Ce^{CV(t)}\Big(\|\partial_{X_{0,\lambda}}\Gamma^0\|_{C^{\EE-1}}+\|\partial_{X_{\lambda}}\{\big[\mathcal{L},v\cdot\nabla]\rho\}\|_{L^1_t C^{\EE-1}}\\
\nonumber&&+(1+\mu t)\Vert\Gamma\Vert_{L^\infty_t L^\infty}\Vert X_{\lambda}\Vert_{L^\infty_t C^{\EE}}+\mu\Vert\Gamma\Vert_{\widetilde L^1_t B^2_{\infty,\infty}}\Vert X_{\lambda}\Vert_{L^\infty_t C^{\EE}}\Big).
\end{eqnarray}

{\bf $\bullet$ Estimate of $\|\partial_{X_{0, \lambda}}\Gamma^{0}\big\|_{C^{\varepsilon-1}}$}. From the definition of the function $\Gamma$ we have: 
\begin{equation}\label{YYYY}
\big\|\partial_{X_{0, \lambda}}\Gamma^0\big\|_{C^{\EE-1}}\le\big\|\partial_{X_{0, \lambda}}\omega^0\big\|_{C^{\EE-1}}+\big\|\partial_{X_{0, \lambda}}\mathcal{L}\rho^0\big\|_{C^{\EE-1}}.
\end{equation}
Since $C^\EE$ is an algebra, then we obtain the general result
\begin{eqnarray}\label{algebra}
\|\partial_{X_{\lambda}}u\|_{C^{\varepsilon-1}}&\le&\|\textnormal{div}(uX_{\lambda})\|_{C^{\EE-1}}+\|u\textnormal{ div}X_{\lambda}\|_{C^{\EE-1}}\\
\nonumber&\lesssim&\|uX_{\lambda}\|_{C^{\EE}}+\|u\textnormal{ div}X_{\lambda}\|_{L^\infty}\\
\nonumber&\lesssim& \|u\|_{C^{\varepsilon}}\widetilde{\|}X_\lambda\|_{C^\EE}.
\end{eqnarray}
Consequently
\begin{equation*}
\big\|\partial_{X_{0, \lambda}}\omega^0\big\|_{C^{\EE-1}}\lesssim \widetilde{\|}X_{0, \lambda}\|_{C^\EE}\|\omega^0\|_{C^\EE},\quad\big\|\partial_{X_{0, \lambda}}\mathcal{L}\rho^0\big\|_{C^{\EE-1}}\lesssim\widetilde{\|}X_{0,\lambda}\|_{C^\EE}\|\mathcal{L}\rho^0\|_{C^\EE}.
\end{equation*}
Concerning $\|\mathcal{L}\rho^0\|_{C^\EE}$, using the fact that $\mathcal{L}$ is of order $-1$. Then Bernstein's inequality yields for $p\geq \frac{2}{1-\EE}$,
\begin{eqnarray}\label{Toto}
\|\mathcal{L}\rho^0\|_{C^\EE}&\le&\|\mathcal{L}\rho^0\|_{L^\infty}+\sup_{q\in\NN}2^{q\EE}\|\Delta_q\mathcal{L}\rho^0\|_{L^\infty}\\ 
\nonumber&\lesssim&\|\mathcal{L}\rho^0\|_{L^\infty}+\sup_{q\in\NN}2^{q(\EE-1+2/p)}\|\Delta_q\rho^0\|_{L^p}.
\end{eqnarray}
Furtheremore $\mathcal{L}$ have a non local structure, i.e., 
\begin{equation*}
\mathcal{L}\rho(t,x)\triangleq\int_{\RR^2}\frac{(x_1-y_1)}{\vert x-y\vert^2}\rho(t,y)dy,
\end{equation*}
and so
\begin{equation*}
\vert\mathcal{L}\rho(t,x)\vert\le\int_{\RR^2}\frac{\vert\rho(t,y)\vert}{\vert x-y\vert} dy=\bigg(\frac{1}{\vert \cdot\vert}\star \vert\rho(t,\cdot)\vert\bigg)(x).
\end{equation*}
Applying the convolution product properties and $\Vert\rho(t)\Vert_{L^{1}\cap L^\infty}\le\Vert\rho^0\Vert_{L^{1}\cap L^\infty}$, we obtain
\begin{eqnarray}\label{V3}
\Vert\mathcal{L}\rho(t)\Vert_{L^\infty}&\lesssim&\Vert\rho(t)\Vert_{L^{1}\cap L^\infty}\\
\nonumber&\lesssim& \Vert\rho^0\Vert_{L^{1}\cap L^\infty}.
\end{eqnarray}
Putting together \eqref{Toto} and \eqref{V3}. Then in view of $\Delta_q:L^p\rightarrow L^p$ is continuous and $L^p=[L^1,L^\infty]_{\frac1p}$, we deduce
\begin{equation*}
\|\mathcal{L}\rho^0\|_{C^\EE}\le\|\rho^0\|_{L^1\cap L^\infty}.
\end{equation*}
Therefore
\begin{equation*}
\big\|\partial_{X_{0, \lambda}}\mathcal{L}\rho^0\big\|_{C^{\varepsilon-1}}\le C_0\widetilde{\|}X_{0, \lambda}\|_{C^\EE}
\end{equation*}
together with \eqref{YYYY}, it happens
\begin{equation}\label{Ham}
\big\|\partial_{X_{0, \lambda}}\Gamma^0\big\|_{C^{\EE-1}}\le C_0\widetilde\|X_{0, \lambda}\|_{C^{\EE}}
\end{equation}
{\bf $\bullet$ Estimate of $\|\partial_{X_{\lambda}}\{\big[\mathcal{L},v\cdot\nabla]\rho\}\|_{L^1_t C^{\EE-1}}$}. To estimate this term we write again in view of \eqref{algebra}, 
\begin{equation*}
\|\partial_{X_{\lambda}}\{\big[\mathcal{L},v\cdot\nabla]\rho\}\|_{ L^1_t C^{\EE-1}}\lesssim C\widetilde{\|}X_{t,\lambda}\|_{ L^\infty_t C^\EE}\Big\|\big[\mathcal{L},v\cdot\nabla  \big]\rho\Big\|_{ L^1_t C^\EE}.
\end{equation*}
Then in accordance with the Proposition \ref{An1} stated in appendix, the last estimate becomes
\begin{equation}\label{Amina0}
\|\partial_{X_{t,\lambda}}\{\big[\mathcal{L},v\cdot\nabla]\rho\}\|_{L^1_t C^{\EE-1}}\le C_0\widetilde{\|}X_{t,\lambda}\|_{L^\infty_{t} C^\EE}t.
\end{equation} 
{\bf $\bullet$ Estimate of $\Vert\Gamma\Vert_{L^\infty_t L^\infty}$.} By definition we have for $\mu\in]0,1[$, 
\begin{equation}\label{V11111}
\|\Gamma\|_{L^\infty_t L^\infty}\le \|\omega\|_{L^\infty_t L^\infty}+\|\mathcal{L}\rho\|_{L^\infty_t L^\infty}.
\end{equation}
Thanks to the Proposition \ref{propasyy1} we have,
\begin{equation*}
\Vert\omega(t)\Vert_{L^\infty}\le C_0\log^2(2+t).
\end{equation*}
Note that the term $\|\mathcal{L}\rho\|_{L^\infty_t L^\infty}$ will be done exactly as in \eqref{V3}. Then in view of the last estimate, \eqref{V11111} takes the form
\begin{equation}\label{V1}
\Vert\Gamma\Vert_{L^\infty_t L^\infty}\le C_0\log^2(2+t).
\end{equation}

{$\bullet$ \bf Estimate of $\|\Gamma\|_{\widetilde L^1_t B^{2}_{\infty,\infty}}$}. Applied the maximal smoothing effect \eqref{maximal-smoothing} to the equation \eqref{Gamma0}, it happens 
\begin{equation*}
\mu\|\Gamma\|_{\widetilde L^1_t B^{2}_{\infty,\infty}}\le Ce^{CV(t)}(1+\mu t)\Big(\|\Gamma^0\|_{B^{0}_{\infty,\infty}}+\big\|\big[\mathcal{L},v\cdot\nabla  \big]\rho\big\|_{L^1_t B^0_{\infty,\infty}}\Big).
\end{equation*}
Using the fact $L^\infty\hookrightarrow B^{0}_{\infty,\infty}$ and $C^\EE\hookrightarrow B^{0}_{\infty,\infty}$ for $\EE>0$, it follows
\begin{equation*}
\mu\|\Gamma\|_{\widetilde L^1_t B^{2}_{\infty,\infty}}\le Ce^{CV(t)}(1+\mu t)\Big(\|\Gamma^0\|_{L^{\infty}}+\big\|\big[\mathcal{L},v\cdot\nabla  \big]\rho\big\|_{L^1_t C^\EE}\Big), 
\end{equation*}
and, in turn, using once more the Proposition \ref{An1}, we get
\begin{equation}\label{Youyou-Es}
\mu\|\Gamma\|_{\widetilde L^1_t B^{2}_{\infty,\infty}}\le Ce^{CV(t)}(1+\mu t)\big(\|\Gamma^0\|_{L^{\infty}}+C_0t\big), 
\end{equation}
For $\|\Gamma^0\|_{L^{\infty}}$, applying the same argument as in \eqref{V3}, we deduce
\begin{equation*}
\Vert\Gamma^0\Vert_{L^\infty}\le\Vert\omega^0\Vert_{L^\infty}+\Vert\rho^0\Vert_{L^1\cap L^\infty}.
\end{equation*}
Together with \eqref{Youyou-Es}, it holds that for $\mu\in]0,1[$ 
\begin{equation}\label{V220}
\mu\|\Gamma\|_{\widetilde L^1_t B^{2}_{\infty,\infty}}\le C_0e^{CV(t)}(1+t)^2.
\end{equation}

Plugging \eqref{Ham}, \eqref{Amina0}, \eqref{V1}, \eqref{V220} in \eqref{Essential}, then after few computations we obtain for $\mu\in]0,1[$

\begin{eqnarray}\label{Gamma}
\big\|\partial_{X_{\lambda}}\Gamma\big\|_{L^\infty_t C^{\EE-1}}&\le & C_0e^{CV(t)}
(1+t^2)\log^2(2+t)\big(1+\widetilde{\|} X_{\lambda}\|_{L^{\infty}_t C^{\EE}}\big).
\end{eqnarray}
But, 
\begin{equation*}
\big\| \partial_{X_{t,\lambda}}\omega(t)\big\|_{C^{\varepsilon-1}}\le \big\| \partial_{X_{t,\lambda}}\Gamma(t)\big\|_{C^{\varepsilon-1}}+\big\| \partial_{X_{t,\lambda}}\mathcal{L}\rho(t)\big\|_{C^{\varepsilon-1}}
\end{equation*}
combined with \eqref{algebra}, \eqref{V3} and \eqref{Gamma} we get
\begin{eqnarray}\label{Youyou555}
\big\| \partial_{X_{t,\lambda}}\omega(t)\big\|_{C^{\varepsilon-1}}&\le & C_0e^{CV(t)}(1+t^2)\log^2(2+t)\big(1+\widetilde{\|} X_{\lambda}\|_{L^{\infty}_t C^{\EE}}\big)+C_0\widetilde{\|} X_{\lambda}\|_{L^{\infty}_t C^{\EE}}\\
\nonumber&\le& C_0e^{CV(t)}(1+t^2)\log^2(2+t)\big(1+\widetilde{\|} X_{\lambda}\|_{L^{\infty}_t C^{\EE}}\big).
\end{eqnarray}
The term $ \widetilde{\|}{X_\lambda}\|_{L^{\infty}_t C^\EE}$ may be bounded by taking advantage to \eqref{hoo} and the Proposition \ref{prop1}, we thus have
\begin{equation}\label{Youyou050}
\|X_{t,\lambda}\|_{C^\EE}\leq C e^{CV(t)}\Big(\|X_{0,\lambda}\|_{C^\EE}+\int_{0}^{t}e^{-CV(\tau)}\|X_{\tau,\lambda}v(\tau)\|_{C^\EE}d\tau\Big).
\end{equation}
According to \cite{HCD,che1}, the quantity $\|\partial_{X_{t, \lambda}}v\|_{C^\EE}$ satisfies,
\begin{equation*}
\|\partial_{X_{t, \lambda}}v\|_{C^\EE}\le C\big(\|\partial_{X_{t,\lambda}}\omega\|_{C^{\EE-1}}+\|\Div X_{t,\lambda}\|_{C^\EE}\|\omega(t)\|_{L^{\infty}}+\|X_{t,\lambda}\|_{C^\EE}\|\nabla v(t)\|_{L^{\infty}}\big).
\end{equation*}
Hence, \eqref{Youyou050} and the last estimate combined with the Gronwall inequality yield 
\begin{equation}\label{Youyou052}
\|X_{t,\lambda}\|_{C^\EE}\leq C e^{CV(t)}\Big(\|X_{0,\lambda}\|_{C^\EE}+\int_{0}^{t}e^{-CV(\tau)}\big(\|\partial_{X_{\tau,\lambda}}\omega(\tau)\|_{C^{\EE-1}}+\|\Div X_{\tau,\lambda}\|_{C^\EE}\|\omega(\tau)\|_{L^{\infty}}\big)\Big)d\tau.
\end{equation}
To conclude, it is enough to treat the term $\Div X_{t,\lambda}$. To do this, we apply "$\Div $" to (\ref{hoo}) and using the fact $\Div v=0$, we deduce that $\Div X_{t,\lambda}$ evolves the equation
$$(\partial_t+v\cdot\nabla)\Div X_{t,\lambda}=0.$$
Again the Proposition \ref{prop1} gives
\begin{equation}\label{68}
\|\textnormal{div}X_{t,\lambda}\|_{C^\EE}\le Ce^{CV(t)}\|\Div X_{0,\lambda}\|_{C^\EE}.
\end{equation}
Combining \eqref{Youyou052} and \eqref{68}, we shall get
\begin{eqnarray*}
\widetilde{\|}X_{t,\lambda}\|_{C^\EE}&\le& C e^{CV(t)}\bigg(\|X_{0,\lambda}\|_{C^\EE}+\|\Div X_{\lambda}(0)\|_{C^\EE}\|\omega\|_{L^1_t L^{\infty}}+\int_{0}^{t}e^{-CV(\tau)}\|\partial_{X_{\tau,\lambda}}\omega(\tau)\|_{C^{\EE-1}}d\tau\bigg).
\end{eqnarray*}
Then, the Proposition \ref{propasyy1} implies
\begin{eqnarray*}
\widetilde{\|}X_{t,\lambda}\|_{C^\EE}&\le& C_0 e^{CV(t)}\bigg(\widetilde{\|}X_{0,\lambda}\|_{C^\EE}\log^2(2+t)+
\int_{0}^{t}e^{-CV(\tau)}\|\partial_{X_{\tau,\lambda}}\omega(\tau)\|_{C^{\EE-1}}d\tau\bigg)\\
\nonumber&& \le C_0 e^{CV(t)}\bigg(\log^2(2+t)+
\int_{0}^{t}e^{-CV(\tau)}\|\partial_{X_{\tau,\lambda}}\omega(\tau)\|_{C^{\EE-1}}d\tau\bigg),
\end{eqnarray*}
combined with \eqref{Youyou555}, it holds
\begin{eqnarray*}
e^{-CV(t)}\widetilde{\|}X_{t,\lambda}\|_{C^\EE} &\le& C_0\bigg((1+t^2)(1+t)\log^2(2+t)+
(1+t^2)\log^2(2+t)\int_{0}^{t}e^{-CV(\tau)}\widetilde{\|} X_{\tau,\lambda}\|_{L^{\infty}_{\tau} C^{\EE}}d\tau\bigg) ,
\end{eqnarray*}
By means of Gronwall inequality, it follows by few computations that 
\begin{equation*}
\widetilde{\|}X_{t,\lambda}\|_{C^\EE}\le C_0 e^{C_0 t^3}e^{CV(t)}.
\end{equation*}
Accordingly \eqref{Youyou555} becomes
\begin{equation*}
\|\partial_{X_{t,\lambda}}\omega(t)\|_{C^{\EE-1}}\le C_0 e^{C_0 t^3}e^{CV(t)}.
\end{equation*}

Putting together the last two estimates, we end up with
\begin{equation}\label{desired-estimate}
\widetilde{\|}X_{t,\lambda}\|_{C^\EE}+\|\partial_{X_{t,\lambda}}\omega(t)\|_{C^{\EE-1}}\le C_0 e^{C_0 t^3}e^{CV(t)}.
\end{equation}
According to the definition \ref{definition}, we recall that:
\begin{equation}\label{finally}
\|\omega(t)\|_{C^\EE(X_t)}=\frac{1}{I(X_t)}\Big(\|\omega\|_{L^\infty}\sup_{\lambda\in\Lambda}\widetilde{\|}X_{\lambda}(t)\|_{C^\EE}+\sup_{\lambda\in\Lambda}\|\partial_{X_{t,\lambda}}\omega(t)\|_{C^{\EE-1}}\Big)
\end{equation}
The required estimate for $\omega$ in $C^\EE(X_t)$ norm follows by showing that $X_t$ defined in \eqref{X-evolution} is a non degenerate family, that is to say, $I(X_t)>0$ . For that purpose, we derive $X_{\lambda}\circ\Psi(t,x )\triangleq \partial_{X_{0,\lambda}}\Psi(t,x)$ with respect to time and using the fact 
\begin{equation*}
\left\{\begin{array}{ll}
\frac{\partial}{\partial t}\Psi(t,x)=v(t,\Psi(t,x))\\
\Psi(0,x)=x,
\end{array}
\right.
\end{equation*}  
it follows
\begin{equation}\label{I1}
\left\{\begin{array}{ll}
\partial_t \partial_{X_{0,\lambda}}\Psi(t,x)=\nabla v(t,\Psi(t,x))\partial_{X_{0,\lambda}}\Psi(t,x)\\
X_{0,\lambda}\Psi(0,x)=X_{0,\lambda}.
\end{array}
\right.
\end{equation}  

Or, \eqref{X-evolution} ensures that \eqref{I1} is time reversibile. In other words,
\begin{equation*}
\partial_t X_{t,\lambda}(x)=X_{0,\lambda}(x)+\int_0^t X_{\tau,\lambda}(x)\cdot\nabla v(\tau,x)d\tau.
\end{equation*}
Gronwall's inequality yields

\begin{equation*}
\vert X_{t,\lambda}(x)\vert^{\pm1}\le X_{0,\lambda}(x)e^{V(t)}
\end{equation*}
in accordance with \eqref{admissibility}, we readily get 
\begin{equation}\label{index-non-degenerate}
I(X_t)\ge I(X_0)e^{-V(t)}>0.
\end{equation}
Consequently, in view of \eqref{desired-estimate}, \eqref{finally} and the last estimate, we finally obtain 
\begin{equation}\label{desired-estimate1}
\|\omega(t)\|_{C^\EE(X_t)}\le C_0 e^{C_0 t^3}e^{CV(t)}.
\end{equation}
Now, we are in position to apply the logarithmic estimate \eqref{5554}. By virtue of \eqref{desired-estimate1}, the increasing of the function $\zeta\mapsto\zeta\log(e+a/\zeta)$ and the Proposition \eqref{propasyy1}, it holds
\begin{eqnarray*}
\|\nabla v(t)\|_{L^\infty}&\le&C_0\Big(\log(2+t)+\log^2(2+t)\log\big(e+\|\omega(t)\|_{C^\EE(X_t)}\big)\Big)\\
&\le& C_0\Big(1+t^3\log^2(2+t)+\log^2(2+t)\int_0^t\|\nabla v(\tau)\|_{L^\infty}d\tau\Big).
\end{eqnarray*}
Hence, Gronwall's inequality yields
\begin{equation*}\label{eq2}
\|\nabla v(t)\|_{L^\infty}\le C_0 e^{C_0 t\log^2(2+t)},
\end{equation*}
combining this estimate with \eqref{desired-estimate1}, we get
$$
\|\omega(t)\|_{C^\EE(X_t)}\le  C_0e^{\exp{C_0 t\log^2(2+t)}}.
$$
To finalize, let us estimate $\Psi(t)$ in $C^\EE(X_t)$. First, we employ that $ \partial_{X_{t,\lambda}}\Psi(t)=X_{t,\lambda}\circ\Psi(t)$ for every $\lambda\in\Lambda$, then in vertu of \eqref{N-equivalent} we thus have
\begin{eqnarray*}
\|X_{t,\lambda}\circ\Psi(t)\|_{C^\EE}&\le& \|X_{t,\lambda}\|_{C^\EE}\|\nabla\Psi(t)\|_{L^\infty}^\EE\\
&\le&  \|X_{t,\lambda}\|_{C^\EE} e^{CV(t)}\quad \forall\lambda\in\Lambda.
\end{eqnarray*}
Here we have used the classical estimate $e^{-CV(t)}\le\|\nabla\Psi^{\pm 1}(t)\|_{L^\infty}\le e^{CV(t)}$. Hence 
\begin{equation}\label{globat}
\|\Psi(t)\|_{C^\EE(X_t)}\le C_0e^{\exp{C_0 t\log^2(2+t)}},
\end{equation}
this concludes the proof.
\end{proof}

\subsection{Proof of  Theorem \ref{pochetheo}.} The proof of the Theorem \ref{pochetheo} requires two principal steps:\\
\begin{enumerate}
\item[(1)] The velocity vector fields is a Lipschitz function globally in time, which immediately follows from Theorem \ref{theo7}. 
\item[(2)] The persistence of H\"olderian regularity in time of the transported patch, i.e., $\partial\Omega_t$ is a simple curve with $C^{1+\EE}-$regularity given by the following scheme: 
\begin{itemize}
\item[(2.i)] Fabricate an initial admissible family $X_0=(X_{0,\lambda})_{0\le\lambda\le1}$, which enables us to show that ${\bf{1}}_{\Omega_0}\in C^\EE(X_0)$ and parametrize its boundary $\partial\Omega_0$ by a simple curve.
\item[(2.ii)] The regularity of evolution family $X_t=(X_{t,\lambda})_{0\le\lambda\le1}$ and the boundary $\partial\Omega_t$, with $\Omega_t=\Psi(t,\Omega_0)$.
\end{itemize}
\end{enumerate}
{\bf(2.i)} Since $\partial\Omega_0$ is an hypersurface of the class $C^{1+\EE}$. Consequently, (1) of the definition \ref{Char} ensures the existence of a local chart $(f_0, V_0)$, with $V_0$ is a neighborhood of $\partial\Omega_0$ such that
\begin{equation*}
\left\{\begin{array}{ll}
f_0\in C^{1+\EE}(\RR^2),\quad \nabla f_0(x)\neq0 & \textrm{on $V_0$}\\
\partial \Omega_0=f_0^{-1}(\{0\})\cap V_0,
\end{array}
\right.
\end{equation*} 
On the other hand, let $\chi\in\mathscr{D}(\RR^2), 0\le\chi\le1$ and
\begin{equation*}
\supp\chi\subset V_0,\quad\chi(x)=1 \quad\forall x\in W_{0},
\end{equation*} 
where $W_0$ is  a small neighborhood of $\partial\Omega_0$ such that $W_0\Subset V_0$. Next, define for every $x\in\RR^2$ the family $X_0=(X_{0,\lambda})_{0\le\lambda\le1}$ by:
\begin{equation}\label{initial-fam}
X_{0,0}(x)=\nabla
^{\bot}f_0(x)\quad\textnormal{and}\quad X_{0,1}(x)=(1-\chi(x))\left(\begin{array}{c}
1\\
0\\
\end{array}
\right).
\end{equation}
It is worthwhile to examine the admissibility of the family $X_0=(X_{0,\lambda})_{0\le\lambda\le1}$. First, we obviously check that $X_0=(X_{0,\lambda})_{0\le\lambda\le1}$ is non-degenerate, and that each component $X_{0,\lambda}$ and its divergence are in $C^\EE(\RR^2)$, then according to Definition \ref{definition-1}, we conclude that $X_0=(X_{0,\lambda})_{0\le\lambda\le1}$ is an admissible family.\\
Second, $X_0=(X_{0,\lambda})_{0\le\lambda\le1}$ is a tangential family (see, (2)-Definition \ref{Char}) with respect to $\Sigma=\partial\Omega_0$, i.e., $$X_{0,\lambda}\in\mathcal{T}^\EE_{|\Sigma},\quad\forall\lambda\in\{0,1\}.$$ Indeed, for the component  $X_{0,0}$, clearly we have:
\begin{equation*}
X_{0,0}(x)\cdot\nabla f_0(x)=\nabla^{\perp}f_0(x)\cdot\nabla f_0(x)=0,\quad\forall x\in\partial\Omega_0,
\end{equation*}
while for the component $X_{0,1}$, using the fact $\chi\equiv1$ on $W_0$, we immediately obtain
\begin{eqnarray*}
X_{0,1}(x)\cdot\nabla f_0(x)&=&(1-\chi(x))\partial_1 f_0(x)\\
&=&0.
\end{eqnarray*}

{\bf(2.ii)} For every $\lambda\in\{0,1\}$ and $x\in\RR^2$, we set $X_{t,\lambda}(x)=\partial_{X_{0,\lambda}}\Psi\big(t,\Psi^{-1}(t,x)\big)$. Discussing by the same argument as in \eqref{68}, \eqref{desired-estimate} and \eqref{index-non-degenerate}, we infer that  $(X_t)$ still remains non-degenerate for every $t\ge0$, and that each $X_{t,\lambda}$ still has components and
divergence in $C^{\EE}$. This means that $X_{t}=(X_{t,\lambda})_{0\le\lambda\le1}$ is an admissible family for all $t\ge0$. \\
Now, we will parametrize the boundary $\partial\Omega_0$. To do this, let $x_0\in\partial\Omega_0$ and define the curve $\gamma ^0$ by the following ordinary  differential equation
 $$\left\lbrace
\begin{array}{l}
\partial_{\sigma}\gamma ^0(\sigma)=X_{0,0}(\gamma ^0(\sigma))\\
 \gamma ^0(0)=x_0.\\
\end{array}
\right.$$
By classical arguments we can see that  $\gamma ^0$ belongs to $C^{1+\EE}(\RR,\RR^2)$. A natural way to define the evolution parametrization of $\partial\Omega_t$ is to set for every $t\ge0$, 
\begin{equation*}
\gamma(t,\sigma)\triangleq\Psi(t,\gamma_0(\sigma)).
\end{equation*}
Clearly that $\gamma(t,\cdot)$ is the transported of $\gamma_0$ by the flow $\Psi$. By applying the criterion differentiation with respect to $\sigma$, we readily get 
$$
\partial_{\sigma}\gamma(t,\sigma)=\big(\partial_{X_{0,0}}\Psi\big)(t,\,\gamma ^0(\sigma)).$$

On the other hand, $\partial_{X_{0,0}}\Psi\equiv X_{0,0}\circ\Psi$, thus we have from estimate \ref{globat} of the Theorem \ref{theo7} that $\partial_{X_{0,0}}\psi\in L^{\infty}_{loc}(\RR_+;\,C^{\EE})$, accordingly $\gamma(t)$ belongs to  $L^{\infty}_{loc}(\RR_+;\,C^{1+\EE})$. This tells us the regularity  persistence of the boundary $\partial\Omega_t$ and so the proof of the Theorem \ref{pochetheo} is accomplished.

\section{The rate convergence} 
\subsection{General statement} In this paragraph we are interested in the rate convergence between $(v_\mu,\rho_\mu)$ and $(v,\rho)$, the solutions of (B$_\mu$) and (B$_0$). To be precise, we will provide a more general version of the Theorem \ref{rate}. For this purpose, we state the following auxiliary result which shows that any vortex patch for smooth bounded domain belongs to $\dot B^{\frac{1}{p}}_{p,\infty}$.  

\begin{prop}\label{Inj}
Let $\Omega_0$ be a $C^{1+\EE}-$bounded domain, with $0<\EE<1$, then the function ${\bf 1}_{\Omega_0}$ belongs to $\dot B^{\frac{1}{p}}_{p,\infty}$. 
\end{prop}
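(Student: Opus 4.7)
The plan is to use the classical characterization of the homogeneous Besov space $\dot B^{s}_{p,\infty}$ with $s\in(0,1)$ in terms of $L^p$ moduli of continuity, namely
\begin{equation*}
\|f\|_{\dot B^{s}_{p,\infty}}\approx \sup_{h\neq 0}\frac{\|f(\cdot+h)-f(\cdot)\|_{L^p}}{|h|^{s}}.
\end{equation*}
Since $p\in[2,+\infty[$, we have $s=\tfrac1p\in(0,\tfrac12]\subset(0,1)$, so this characterization applies. The whole task therefore reduces to bounding $\|{\bf 1}_{\Omega_0}(\cdot+h)-{\bf 1}_{\Omega_0}\|_{L^p}$ by $C\,|h|^{1/p}$ uniformly in $h\in\RR^2$.

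First I would observe the pointwise identity $|{\bf 1}_{\Omega_0}(x+h)-{\bf 1}_{\Omega_0}(x)|={\bf 1}_{(\Omega_0-h)\triangle \Omega_0}(x)$, which gives
\begin{equation*}
\|{\bf 1}_{\Omega_0}(\cdot+h)-{\bf 1}_{\Omega_0}\|_{L^p}^p=\bigl|(\Omega_0-h)\triangle\Omega_0\bigr|.
\end{equation*}
So I need to show $|(\Omega_0-h)\triangle\Omega_0|\lesssim |h|$ with a constant depending only on $\Omega_0$. The decisive input is that $\partial\Omega_0$ is of class $C^{1+\EE}$ (in particular $C^{1}$), hence compact with finite one-dimensional Hausdorff measure $\mathcal H^{1}(\partial\Omega_0)<\infty$.

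The key geometric step is the inclusion $(\Omega_0-h)\triangle\Omega_0\subset\mathcal V_{|h|}(\partial\Omega_0)$, where $\mathcal V_\delta(\partial\Omega_0)$ denotes the tubular neighborhood of $\partial\Omega_0$ of thickness $\delta$: if $x\in(\Omega_0-h)\triangle\Omega_0$, then the segment $[x,x+h]$ must cross $\partial\Omega_0$, so $\mathrm{dist}(x,\partial\Omega_0)\le|h|$. For $|h|$ smaller than the injectivity radius of the normal tubular neighborhood (which is positive by the $C^{1+\EE}$ hypothesis), a straightforward application of the co-area formula (or the tube formula for hypersurfaces of class $C^{1}$) yields
\begin{equation*}
|\mathcal V_{|h|}(\partial\Omega_0)|\le C\,\mathcal H^{1}(\partial\Omega_0)\,|h|.
\end{equation*}
For large $|h|$ the trivial bound $|(\Omega_0-h)\triangle\Omega_0|\le 2|\Omega_0|\lesssim |h|$ finishes the job. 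Putting these two regimes together gives $\|{\bf 1}_{\Omega_0}(\cdot+h)-{\bf 1}_{\Omega_0}\|_{L^p}\le C_0\,|h|^{1/p}$, hence the claimed membership in $\dot B^{1/p}_{p,\infty}$.

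The only slightly technical point is the tubular estimate $|\mathcal V_\delta(\partial\Omega_0)|\lesssim \delta$ for small $\delta$: this is standard for $C^{1,\EE}$ compact hypersurfaces and can be obtained either by a partition of unity reducing to a local graph of a $C^{1+\EE}$ function and applying Fubini, or directly by the smooth tubular neighborhood theorem. The rest of the argument is a pure reduction to this geometric fact.
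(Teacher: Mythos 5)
Your proof is correct, but it follows a genuinely different route from the paper. The paper first invokes the embedding $BV\hookrightarrow\dot B^{1}_{1,\infty}$ (proved in its appendix via the estimate $\|{\bf\Delta}_h u\|_{L^1}\le |h|\,|Du|(\RR^2)$ for $BV$ functions) to get $\|\dot\Delta_q{\bf 1}_{\Omega_0}\|_{L^1}\lesssim 2^{-q}$, and then interpolates each dyadic block between $L^1$ and $L^\infty$, $\|\dot\Delta_q{\bf 1}_{\Omega_0}\|_{L^p}\le\|\dot\Delta_q{\bf 1}_{\Omega_0}\|_{L^1}^{1/p}\|\dot\Delta_q{\bf 1}_{\Omega_0}\|_{L^\infty}^{1-1/p}\lesssim 2^{-q/p}$. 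You instead work entirely on the physical side with the finite-difference characterization of $\dot B^{1/p}_{p,\infty}$ (which is legitimate since $1/p\in(0,1)$, and is in fact the same equivalent norm the paper quotes in its appendix), and you exploit the special structure of indicator functions: $\|{\bf\Delta}_h{\bf 1}_{\Omega_0}\|_{L^p}^p=|(\Omega_0-h)\triangle\Omega_0|$, so the $L^p$ and $L^1$ estimates coincide and no interpolation is needed. The two arguments rest on the same quantitative fact, $|(\Omega_0-h)\triangle\Omega_0|\lesssim|h|$; the paper extracts it from the $BV$ structure theorem, while you derive it from the inclusion of the symmetric difference in a tubular neighborhood of $\partial\Omega_0$ of width $|h|$ together with the tube volume bound $|\mathcal V_\delta(\partial\Omega_0)|\lesssim\mathcal H^1(\partial\Omega_0)\,\delta$ for small $\delta$ and the trivial bound $2|\Omega_0|$ for large $|h|$. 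Your version is more elementary and self-contained for the specific case of a characteristic function of a $C^{1+\EE}$ domain (it avoids both the $BV$ embedding and the block-wise interpolation), whereas the paper's argument applies verbatim to any function in $L^\infty\cap BV$, which is a slightly more general statement obtained at the same cost. Both are complete proofs; the only point you leave to the literature, the tubular volume estimate for a compact $C^{1}$ hypersurface, is indeed standard.
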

\begin{proof}
We follow the formalism performed in \cite{M-rate} with more details. Since $\Omega_0$ is $C^{1+\EE}-$bounded domain, then in view of $C^{1+\EE}\hookrightarrow Lip$ we deduce that ${\bf 1}_{\Omega_0}\in L^\infty\cap BV$, with $BV$\footnote{$BV$ is the space of functions of bounded variations defined by $$ BV(\RR^2)\triangleq\Big\{u\in L^1(\RR^2):\forall i=1,\ldots,2,\;\exists\lambda_i\in\mathscr{M}_b(\RR^2,\RR^2); \int_{\RR^2}u\frac{\partial\varphi}{\partial x_i}dx=-\int_{\RR^2}\varphi d\lambda_i\quad\forall\varphi\in\mathscr{D}(\RR^2)\Big\}$$ equipped with the norm $$\Vert u\Vert_{BV}\triangleq\|u\|_{L^1}+\vert D u\vert(\RR^2),$$ where $\vert D u\vert(\RR^2)$ is the total variation of measure $Du$.} is the Banach space of bounded variations. By means of the Proposition \ref{injec} stated in the appendix, we get

$$BV\hookrightarrow \dot B^1_{1,\infty}$$
In particular we have  
\begin{equation*}
\Vert \dot\Delta_q{\bf 1}_{\Omega_0}\Vert_{L^1}\lesssim 2^{-q}\Vert{\bf 1}_{\Omega_0}\Vert_{BV},\; \forall q\in\ZZ,
\end{equation*}
combined with $L^p=[L^1, L^\infty]_{\frac1p}$, we deduce
\begin{eqnarray*}
\Vert\dot\Delta_q{\bf 1}_{\Omega_0}\Vert_{L^p}&\lesssim& \Vert\dot\Delta_q{\bf 1}_{\Omega_0}\Vert_{L^1}^{\frac1p}\Vert\dot\Delta_q{\bf 1}_{\Omega_0}\Vert_{L^\infty}^{1-\frac1p}\\
&\lesssim& 2^{-\frac{q}{p}}\Vert{\bf 1}_{\Omega_0}\Vert_{BV}\Vert{\bf 1}_{\Omega_0}\Vert_{L^\infty}\\
&\lesssim&2^{-\frac{q}{p}}\Vert{\bf 1}_{\Omega_0}\Vert_{L^\infty\cap BV}.
\end{eqnarray*}
Here we have used the fact that $\dot\Delta_q$ maps continuously $L^\infty$ into it self. Thus we obtain
\begin{equation*}
2^{-\frac{q}{p}}\Vert\dot\Delta_q{\bf 1}_{\Omega_0}\Vert_{L^p}\le C\Vert{\bf 1}_{\Omega_0}\Vert_{L^\infty\cap BV},\quad \forall q\in\ZZ.
\end{equation*}
Taking the supremum over $q\in\ZZ$, we finally obtain that ${\bf 1}_{\Omega_0}\in \dot B^{\frac{1}{p}}_{p,\infty}$ 
\end{proof}
Now, we state the general version of the Theorem \ref{rate}. Roughly speaking we have:  
\begin{Theo}\label{rateconvergence} Let $(v_{\mu}, \rho_\mu)$ and  $(v, \rho)$ be the solutions of (B$_{\mu}$) and (B$_{0}$) respectively with $(v_{\mu}^0, \rho_{\mu}^0)$ and $(v^0, \rho^0)$ their initial data. Let $\omega^0_{\mu},\; \omega^0$ be their vorticties with $\omega_{\mu}^0\in L^\infty\cap B^{\frac1p}_{p,\infty},\; \omega^0\in L^1\cap L^\infty$ and $\rho^{0}, \rho_{\mu}^0\in L^1\cap\L^p$. Setting $\Pi(t)=\Vert v_{\mu}-v\Vert_{L^{p}}+\Vert \rho_{\mu}-\rho\Vert_{L^{p}}$, then we have the following rate of convergence.
\begin{equation*}
\Pi(t)\le  C e^{C(t+V_\mu(t)+V(t))}\Big(\Pi(0)+(\mu t)^{\frac12+\frac{1}{2p}}(1+\mu t)\big(\|\omega_{\mu}^{0}\|_{B^{\frac1p}_{p,\infty}}+\Vert\rho_\mu^{0}\Vert_{L^p}\big)\Big)\quad p\in[2,\infty[,
\end{equation*}
where $$V_{\mu}(t)=\int_{0}^{t}\|\nabla v_{\mu}(\tau)\|_{L^\infty}d\tau,\quad V(t)=\int_{0}^{t}\|\nabla v(\tau)\|_{L^\infty}d\tau.$$
\end{Theo}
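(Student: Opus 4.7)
I would follow the Abidi-Danchin approach~\cite{ad}, adapted to the stratified setting. Set $\delta v := v_\mu - v$, $\delta\rho := \rho_\mu - \rho$ and $\delta p := p_\mu - p$. Subtracting (B$_{0}$) from (B$_\mu$) I get
\begin{align*}
\partial_t\delta v + v_\mu\cdot\nabla\delta v + \delta v\cdot\nabla v + \nabla\delta p &= \delta\rho\,\vec e_2 + \mu\Delta v_\mu,\\
\partial_t\delta\rho + v\cdot\nabla\delta\rho + \delta v\cdot\nabla\rho_\mu - \Delta\delta\rho &= 0.
\end{align*}
The structural choice of keeping $\mu\Delta v_\mu$ (rather than splitting it as $\mu\Delta\delta v + \mu\Delta v$) on the right-hand side is essential: this way the source term involves only the \emph{viscous} vorticity, for which parabolic smoothing is available; if we had $\mu\Delta v$ on the right, there would be no smoothing to exploit. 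In the second equation the troublesome drift $\delta v\cdot\nabla\rho_\mu = \mathrm{div}(\delta v\,\rho_\mu)$ (using $\mathrm{div}\,\delta v=0$) is put in divergence form to prepare an integration by parts.

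\textbf{$L^p$ estimates and Gronwall reduction.} Applying Leray's projector (bounded on $L^p$ for $1<p<\infty$) to the momentum equation and using the standard $L^p$-estimate for transport by the divergence-free field $v_\mu$ yields
\begin{equation*}
\|\delta v(t)\|_{L^p}\le \|\delta v(0)\|_{L^p} + C\int_0^t\Big(\|\nabla v(\tau)\|_{L^\infty}\|\delta v(\tau)\|_{L^p} + \|\delta\rho(\tau)\|_{L^p} + \mu\|\Delta v_\mu(\tau)\|_{L^p}\Big)d\tau.
\end{equation*}
For the density, an $L^p$ energy estimate against $|\delta\rho|^{p-2}\delta\rho$ kills the transport term (by $\mathrm{div}\,v=0$), integrates the divergence-form source by parts, and absorbs the resulting $\nabla\delta\rho$ into the diffusion $\int|\delta\rho|^{p-2}|\nabla\delta\rho|^2\,dx$ through Cauchy-Schwarz and Young's inequality, using $\|\rho_\mu\|_{L^\infty_{t,x}}\le\|\rho_\mu^0\|_{L^\infty}$ from the maximum principle. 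This couples $\|\delta\rho\|_{L^p}$ back to $\|\delta v\|_{L^p}$ linearly in a Gronwall-closable manner. Adding the two bounds and invoking Gronwall's lemma with the cumulative Lipschitz norm $V_\mu(t)+V(t)$ reduces the whole proof to establishing the single sharp inequality $\mu\|\Delta v_\mu\|_{L^1_tL^p}\lesssim (\mu t)^{\frac12+\frac{1}{2p}}(1+\mu t)(\|\omega_\mu^0\|_{B^{1/p}_{p,\infty}}+\|\rho_\mu^0\|_{L^p})$.

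\textbf{Sharp smoothing estimate.} By the 2d Biot-Savart identity $\Delta v_\mu = \nabla^\perp\omega_\mu$, we have $\|\Delta v_\mu\|_{L^p}=\|\nabla\omega_\mu\|_{L^p}$ (whose $L^p$-boundedness in terms of the vorticity is also secured by Proposition~\ref{CZyg}). The vorticity solves the transport-diffusion equation $\partial_t\omega_\mu+v_\mu\cdot\nabla\omega_\mu-\mu\Delta\omega_\mu=\partial_1\rho_\mu$ with $\omega_\mu^0\in B^{1/p}_{p,\infty}$ (a hypothesis which, for vortex patches, is exactly Proposition~\ref{Inj}). Heuristically, frequency localisation and the parabolic kernel give
\begin{equation*}
\|\nabla\omega_\mu(t)\|_{L^p}\lesssim\sum_{q\ge-1}2^q e^{-c\mu t 2^{2q}}\|\Delta_q\omega_\mu^0\|_{L^p}\lesssim\|\omega_\mu^0\|_{B^{\frac{1}{p}}_{p,\infty}}\sum_q 2^{q(1-\frac1p)}e^{-c\mu t 2^{2q}},
\end{equation*}
and the dominant dyadic block ($2^q\sim(\mu t)^{-1/2}$) makes the sum behave like $(\mu t)^{-\frac12+\frac{1}{2p}}$, an integrable singularity; time-integration and multiplication by $\mu$ then yield the claimed rate $(\mu t)^{\frac12+\frac{1}{2p}}$. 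Rigorously, this is obtained from the maximal smoothing effect of Proposition~\ref{Persistance} applied with $s=1/p$ and $r=2p/(p-1)$ (so that $s+\tfrac{2}{r}=1$), interpolated against the trivial $L^\infty_tL^p$ bound on $\omega_\mu$ coming from Proposition~\ref{propasyy1}; the forcing $\partial_1\rho_\mu$ is handled via the analogous smoothing of the density equation starting from $\rho_\mu^0\in L^p$, whence the $\|\rho_\mu^0\|_{L^p}$ contribution, and the prefactor $(1+\mu t)$ is inherent to Proposition~\ref{Persistance}.

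\textbf{Main obstacle.} The hardest part is establishing the sharp rate $(\mu t)^{\frac12+\frac{1}{2p}}$ on $\mu\|\nabla\omega_\mu\|_{L^1_tL^p}$ for arbitrary $p\in[2,\infty[$. Abidi-Danchin's Fourier argument is tailored to $p=2$ and pure Navier-Stokes, so here one must simultaneously extend the heat-kernel computation to general $p$ — balancing Besov indices in Proposition~\ref{Persistance} and interpolating with the critical regularity $B^{1/p}_{p,\infty}$ of the patch provided by Proposition~\ref{Inj} — and absorb the buoyancy-driven source $\partial_1\rho_\mu$ without losing in the rate, which is what forces the combination $\|\omega_\mu^0\|_{B^{1/p}_{p,\infty}}+\|\rho_\mu^0\|_{L^p}$ in the final bound.
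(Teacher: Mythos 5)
Your proposal follows essentially the same route as the paper: the same difference system with $\mu\Delta v_\mu$ kept as the source, the same $L^p$ energy/Gronwall reduction to the single bound $\mu\Vert\Delta v_{\mu}\Vert_{L^1_tL^p}\lesssim(\mu t)^{\frac12+\frac{1}{2p}}(1+\mu t)\big(\Vert\omega_\mu^0\Vert_{B^{1/p}_{p,\infty}}+\Vert\rho_\mu^0\Vert_{L^p}\big)$, and the same combination of the maximal smoothing effect (Proposition \ref{Persistance}) with a frequency interpolation — the paper's Proposition \ref{Intp} is exactly the interpolation you sketch, applied with the two endpoints $B^{1/p}_{p,\infty}$ (persistence, $r=\infty$) and $B^{2+1/p}_{p,\infty}$ (full smoothing, $r=1$) rather than your single exponent $r=2p/(p-1)$. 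The one point to tighten is that $\widetilde{L}^r_tB^{1}_{p,\infty}$ does not control $\Vert\nabla\omega_\mu\Vert_{L^r_tL^p}$, and interpolating against the regularity-zero bound $L^\infty_tL^p$ cannot repair this since both endpoints would then lie at regularity $\le 1$; one endpoint must sit strictly above regularity $1$, which is precisely why the paper straddles it with the indices $\frac1p$ and $2+\frac1p$ — whereas your handling of the density source term by integration by parts and absorption into the dissipation is a harmless (arguably cleaner) variant of the paper's direct H\"older estimate against $\Vert\nabla\rho\Vert_{L^1_tL^\infty}$.
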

The proof of the preceeding Theorem needed the following interpolation result.

\begin{prop}\label{Intp} Let $(p,r,\eta)\in[1, \infty]^2\times]-1,1[$ and $v_{\mu}$ be a free divergence vector field depicted by the Biot-Savart law $v_\mu=\Delta^{-1}\nabla^\perp\omega_\mu$, i.e.,
\begin{equation*}
v_{\mu}(t,x)=\frac{1}{2\pi}\int_{\RR^2}\frac{(x-y)^\perp}{\vert x-y\vert^2}\omega_{\mu}(t,y)dy.
\end{equation*}
Then the following estimate holds true.
\begin{equation*}
\Vert \Delta v_{\mu}\Vert_{L^{r}_t L^p}\le C\Vert\omega_{\mu}\Vert_{\widetilde{L}_t^{r} B_{p, \infty}^{\eta}}^{\frac{1+\eta}{2}}\Vert \omega_{\mu}\Vert_{\widetilde{L}_t^{r} B_{p,\infty}^{2+\eta}}^{\frac{1-\eta}{2}}.
\end{equation*}
\end{prop}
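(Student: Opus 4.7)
The plan is to reduce the estimate to a standard Bernstein/interpolation argument between two dyadic endpoint norms. First I would exploit the Biot--Savart representation to rewrite $\Delta v_\mu$ in terms of $\omega_\mu$: since $v_\mu=\nabla^\perp\Delta^{-1}\omega_\mu$, one has the identity $\Delta v_\mu=\nabla^\perp\omega_\mu$. Consequently, the quantity to be controlled becomes $\|\nabla^\perp\omega_\mu\|_{L^r_tL^p}$, and by Minkowski's inequality on the nonhomogeneous Littlewood--Paley decomposition,
\begin{equation*}
\|\Delta v_\mu\|_{L^r_tL^p}\le\sum_{q\ge-1}\|\Delta_q\nabla^\perp\omega_\mu\|_{L^r_tL^p}.
\end{equation*}

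Next I would apply the Bernstein inequality from Lemma \ref{Bernstein} (in time--wise fashion, then integrating in $t$) to each block, obtaining $\|\Delta_q\nabla^\perp\omega_\mu\|_{L^r_tL^p}\lesssim 2^q\|\Delta_q\omega_\mu\|_{L^r_tL^p}$. The main step is then a split at a dyadic threshold $N\in\mathbb{Z}$, to be optimized later. For the low frequencies $q\le N$ I would factor $2^q=2^{q(1-\eta)}\cdot 2^{q\eta}$ and use the definition of the $\widetilde L^r_tB^\eta_{p,\infty}$ norm; the geometric sum $\sum_{q\le N}2^{q(1-\eta)}$ converges (here is where the hypothesis $\eta<1$ is used) and is controlled by $C\,2^{N(1-\eta)}$. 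For the high frequencies $q>N$ I would instead factor $2^q=2^{-q(1+\eta)}\cdot 2^{q(2+\eta)}$ and bound each block by $\|\omega_\mu\|_{\widetilde L^r_tB^{2+\eta}_{p,\infty}}$; the tail $\sum_{q>N}2^{-q(1+\eta)}$ converges thanks to the hypothesis $\eta>-1$ and is bounded by $C\,2^{-N(1+\eta)}$. Putting the two pieces together yields, for every integer $N$,
\begin{equation*}
\|\Delta v_\mu\|_{L^r_tL^p}\le C\Bigl(2^{N(1-\eta)}\,\|\omega_\mu\|_{\widetilde L^r_tB^\eta_{p,\infty}}+2^{-N(1+\eta)}\,\|\omega_\mu\|_{\widetilde L^r_tB^{2+\eta}_{p,\infty}}\Bigr).
\end{equation*}

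Finally, I would optimize in $N$ by equating the two contributions, i.e.\ choosing $N$ to be the integer closest to $\tfrac12\log_2\bigl(\|\omega_\mu\|_{\widetilde L^r_tB^{2+\eta}_{p,\infty}}/\|\omega_\mu\|_{\widetilde L^r_tB^\eta_{p,\infty}}\bigr)$; the rounding costs only a harmless multiplicative constant. Substituting this choice produces the desired exponents $\tfrac{1+\eta}{2}$ and $\tfrac{1-\eta}{2}$, and concludes the proof. The only real technicality is justifying the dyadic splitting and the optimization when one of the two norms vanishes or is infinite, which are trivial limit cases; there is no deep obstacle here since the constraint $|\eta|<1$ is precisely what guarantees the convergence of both geometric series on either side of the split.
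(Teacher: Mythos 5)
Your proposal is correct and follows essentially the same route as the paper's proof: the identity $\Delta v_\mu=\nabla^\perp\omega_\mu$, a Littlewood--Paley split at a threshold $N$ with Bernstein's inequality and the factorizations $2^q=2^{q(1-\eta)}2^{q\eta}$ and $2^q=2^{-q(1+\eta)}2^{q(2+\eta)}$, followed by optimization in $N$ equating the two contributions. The only (welcome) additions are your explicit remarks on where $|\eta|<1$ is used for the convergence of the geometric series and on the degenerate cases of vanishing or infinite norms.
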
 
\begin{proof} To prove this estimate, let $N\in\NN$ be a parameter that will be chosen judiciously later. The fact $\Delta v_{\mu}=\nabla^{\perp}\omega_{\mu}$,  interpolation in frequency and Bernstein inequality enable us to write

\begin{eqnarray}\label{Ayat11}
\|\Delta v_{\mu}\|_{L^{r}_{t} L^p}&\le& \sum_{q\le N}\Vert\Delta_q\nabla^{\perp}\omega_{\mu}\Vert_{L^{r}_t L^p}+\sum_{q> N}\Vert\Delta_q\nabla^{\perp}\omega_{\mu}\Vert_{L^{r}_t L^p}\\
\nonumber&\le &\sum_{q\le N}2^{q(1-\eta)}2^{q\eta}\Vert\Delta_q\omega_{\mu}\Vert_{L^{r}_{t} L^p}+\sum_{q>N}2^{q(-1-\eta)}2^{q(2+\eta)}\Vert\Delta_{q}\omega_{\mu}\Vert_{L^{r}_{t} L^p}\\
\nonumber&\le & 2^{N(1-\eta)}\Vert\omega_{\mu}\Vert_{\widetilde L^{r}_t B^{\eta}_{p,\infty}}+2^{-N(1+\eta)}\Vert\omega_{\mu}\Vert_{\widetilde L^{r}_t B^{2+\eta}_{p,\infty}}.
\end{eqnarray}
Now, we choose $N$ such that
\begin{equation*}
2^{N(1-\eta)}\Vert\omega_{\mu}\Vert_{\widetilde{L}^{r}_t B^{\eta}_{p,\infty}}\approx 2^{-N(1+\eta)}\Vert\omega_{\mu}\Vert_{\widetilde{L}^{r}_t B^{2+\eta}_{p,\infty}},
\end{equation*}
whence
\begin{equation}\label{Ayat111}
2^{2N}\approx\frac{\Vert\omega_{\mu}\Vert_{\widetilde{L}^{r}_t B^{2+\eta}_{p,\infty}}}{\Vert\omega_{\mu}\Vert_{\widetilde{L}^{r}_t B^{\eta}_{p,\infty}}}.
\end{equation}
Inserting \eqref{Ayat111} in \eqref{Ayat11}, we obtain the desired estimate and so the proof is completed.
\end{proof}

\begin{proof}[Proof of the Theorem \ref{rateconvergence}] We set $U=v_{\mu }-v, \Theta=\rho _{\mu }-\rho$ and $P=p _{\mu }-p$. We intend to estimate the quantity $\Vert U\Vert_{L^p}+\Vert\Theta\Vert_{L^p}$. To do this, making a few computations we discover that  $U$ and $\Theta$ evolve the nonlinear equations,
\begin{equation}\label{princ-syst1}
\left\{ 
\begin{array}{ll}
\partial _{t}U+( v_{\mu }\cdot \nabla) U-\mu \Delta v_{\mu}+\nabla P=\Theta e_{2}-(U\cdot\nabla)v & \textrm{ $(t,x)\in 
\mathbb{R}_{+}\times \mathbb{R}^{2}$},\\ 
\partial _{t}\Theta+(v_{\mu }\cdot \nabla)\Theta -\Delta\Theta=-U\cdot\nabla \rho & \textrm{ $(t,x)\in 
\mathbb{R}_{+}\times \mathbb{R}^{2}$},\\ 
\Div U=0, &  \\ 
U_{|t=0}=U_0,\quad \Theta_{|t=0}=\Theta_0. 
\end{array}
\right.   \tag{$\widetilde{B}_{\mu}$}
\end{equation}

$\bullet$ {\bf Estimate of $\Vert U(t)\Vert_{L^p}$.} Multiply the first equation in ($\widetilde{B}_{\mu}$)  by $U\vert U\vert^{p-2}$, and integrating by parts over the space variables $\RR^2$, then in view of $\Div v_\mu=\Div v=0$, it follows
\begin{eqnarray*}
\frac{1}{p}\frac{d}{dt}\Vert U(t)\Vert_{L^{p}}^{p}&\le&\int_{\RR^2}\big\vert\nabla P\cdot U\vert U\vert^{p-2}\big\vert dx+\mu\int_{\RR^2}\big\vert\Delta v_{\mu}\cdot U\vert U\vert^{p-2}\big\vert dx+\int_{\RR^2}\big\vert (U\cdot\nabla)v\cdot U\vert U\vert^{p-2}\big\vert dx\\
&&+\int_{\RR^2}\big\vert\Theta e_2\cdot U\vert U\vert^{p-2}\big\vert dx.
\end{eqnarray*}
H\"older's inequality yields
\begin{equation*}
\frac{1}{p}\frac{d}{dt}\Vert U(t)\Vert_{L^{p}}^{p}\le\Vert\nabla P(t)\Vert_{L^p}\Vert U(t)\Vert_{L^p}^{p-1}+\mu\Vert\Delta v_{\mu}(t)\Vert_{L^p}\Vert U(t)\Vert_{L^p}^{p-1}+\Vert\nabla v\Vert_{L^\infty}\Vert U(t)\Vert_{L^p}^{p}+\Vert\Theta(t)\Vert_{L^p}\Vert U(t)\Vert_{L^p}^{p-1},
\end{equation*}
so, integrating in time over $[0, t]$, we obtain
\begin{eqnarray}\label{Ayat00}
\Vert U(t)\Vert_{L^{p}}&\le &\Vert U_0\Vert_{L^{p}}+\int_0^t\Vert\nabla P(\tau)\Vert_{L^p}d\tau+\mu\int_0^t\Vert\Delta v_{\mu}(\tau)\Vert_{L^p}d\tau\\
\nonumber &&+\int_0^t\Vert\nabla v(\tau)\Vert_{L^\infty}\Vert U(\tau)\Vert_{L^p}d\tau+\int_0^t\Vert\Theta(\tau)\Vert_{L^p}d\tau.
\end{eqnarray}
Concerning the term $\Vert\nabla P\Vert_{L^p}$, applying the "$\Div$" operator to the first equation of ($\widetilde{B}_{\mu}$), one finds after easy algebraic computations
\begin{eqnarray*}
-\Delta P=\Div\big(U\cdot\nabla(v_{\mu}+v)\big)+\partial_2\Theta,
\end{eqnarray*}
then we have
\begin{equation*}
-\nabla P=\nabla\Delta^{-1}\Div\big(U\cdot\nabla(v_{\mu}+v)\big)+\nabla\Delta^{-1}\partial_2\Theta.
\end{equation*}
The boundedness of Riesz transform on $L^p,\; p\in]1,\infty[$ into it self leading to
\begin{equation*}
\Vert\nabla P\Vert_{L^p}\lesssim\Vert U\Vert_{L^p}\big(\Vert\nabla v_{\mu}\Vert_{L^\infty}+\Vert\nabla v\Vert_{L^\infty}\big)+\Vert\Theta\Vert_{L^p}.
\end{equation*}
Inserting the above estimate into \eqref{Ayat00}, we deduce that
\begin{eqnarray}\label{Ayat000}
\Vert U(t)\Vert_{L^{p}}&\lesssim &\Vert U_0\Vert_{L^{p}}+\int_0^t\Vert U(\tau)\Vert_{L^p}\big(\Vert\nabla v_{\mu}(\tau)\Vert_{L^\infty}+\Vert\nabla v(\tau)\Vert_{L^\infty}\big)d\tau\\
\nonumber&&+\mu\int_0^t\Vert\Delta v_{\mu}(\tau)\Vert_{L^p}d\tau+\int_0^t\Vert\Theta(\tau)\Vert_{L^p}d\tau.
\end{eqnarray}

$\bullet$ {\bf Estimate of $\|\Theta\|_{L^1_t L^p}$}. Multiplying the second equation in ($\widetilde{B}_{\mu}$) by $\Theta\vert\Theta\vert^{p-2}$, and integrating by parts over $\RR^2$. Then by virtue of $\Div v_\mu=\Div v=0$, it happens
\begin{equation*}
\frac{1}{p}\frac{d}{dt}\Vert \Theta(t)\Vert_{L^{p}}^{p}+(p-1)\int_{\RR^2}\vert\nabla\Theta(t)\vert^2\vert\Theta(t)\vert^{p-2}dx\le \int_{\RR^2}\big\vert U(t)\cdot\nabla\rho(t)\vert\vert\Theta(t)\vert^{p-1} dx,
\end{equation*}
owing to H\"older's inequality, we shall have
\begin{equation*}
\frac{1}{p}\frac{d}{dt}\Vert \Theta(t)\Vert_{L^{p}}^{p}+(p-1)\int_{\RR^2}\vert\nabla\Theta(t)\vert^2\vert\Theta(t)\vert^{p-2}dx\le \Vert U(t)\Vert_{L^p}\Vert\nabla\rho(t)\Vert_{L^\infty}\Vert\Theta(t)\Vert_{L^p}^{p-1}.
\end{equation*}
Since the second term of the left-hand side has a non-negative sign, one obtains
\begin{equation*}
\frac{d}{dt}\Vert \Theta(t)\Vert_{L^{p}}\lesssim \Vert U(t)\Vert_{L^p}\Vert\nabla\rho(t)\Vert_{L^\infty}.
\end{equation*}
Integrating in time over $[0, t]$, we get
\begin{equation}\label{Ayat1}
\Vert \Theta(t)\Vert_{L^{p}}\lesssim \Vert \Theta_0\Vert_{L^{p}}+\int_0^t\Vert U(\tau)\Vert_{L^p}\Vert\nabla\rho(\tau)\Vert_{L^\infty}d\tau.
\end{equation}
Putting together \eqref{Ayat000} and \eqref{Ayat1}, we readily get
\begin{eqnarray*}
\nonumber\Vert U(t)\Vert_{L^{p}}+\Vert \Theta(t)\Vert_{L^{p}}&\lesssim& \Vert U_0\Vert_{L^{p}}+\Vert \Theta_0\Vert_{L^{p}}+\int_0^t\Vert U(\tau)\Vert_{L^p}\big(\Vert\nabla v_{\mu}(\tau)\Vert_{L^\infty}+\Vert\nabla v(\tau)\Vert_{L^\infty}\big)d\tau\\
\nonumber&&+\mu\int_0^t\Vert\Delta v_{\mu}(\tau)\Vert_{L^p}d\tau+\int_0^t\Vert\Theta(\tau)\Vert_{L^p}d\tau+\int_0^t\Vert U(\tau)\Vert_{L^p}\Vert\nabla\rho(\tau)\Vert_{L^\infty}d\tau.
\end{eqnarray*}
Since $\Pi(t)\triangleq\Vert U(t)\Vert_{L^{p}}+\Vert \Theta(t)\Vert_{L^{p}}$, then after few caculations we find that
\begin{equation*}
\Pi(t)\lesssim \Pi(0)+\int_0^t\big(1+\Vert\nabla v_{\mu}(\tau)\Vert_{L^\infty}+\Vert\nabla v(\tau)\Vert_{L^\infty}+\|\nabla\rho(\tau)\|_{L^\infty}\big)\Pi(\tau)d\tau+\mu\int_0^t\Vert\Delta v_{\mu}(\tau)\Vert_{L^p}d\tau.
\end{equation*}
Using Gronwall's inequality, we can write
\begin{eqnarray}\label{Rate00}
\Pi(t)&\lesssim& e^{Ct}e^{V_{\mu}(t)+V(t)+\Vert\nabla\rho\Vert_{L^1_t L^\infty}}\big(\Pi(0)+\mu\Vert\Delta v_{\mu}\Vert_{L^1_t L^p}\big).
\end{eqnarray}
We now turn to the estimate of the principal term $\mu\Vert\Delta v_{\mu}\Vert_{L^1_t L^p}$ which provides the desired rate of convergence. For this aim, we apply the Proposition \ref{Intp} by taking $\eta=\frac1p$ and $r=1$,

\begin{equation}\label{Ayat2}
\mu\Vert \Delta v_{\mu}\Vert_{L_t^1 L^p}\le\underbrace{\mu\Vert\omega_{\mu}\Vert_{\widetilde{L}_t^{1} B_{p, \infty}^{\frac1p}}^{\frac{1}{2}+\frac{1}{2p}}}_{\textnormal{I}}\underbrace{\Vert \omega_{\mu}\Vert_{\widetilde{L}_t^{1} B_{p,\infty}^{2+\frac1p}}^{\frac{1}{2}-\frac{1}{2p}}}_{\textnormal{II}}.
\end{equation}
For the term I, applying the H\"older inequality in the time variable, we deduce that
\begin{equation*}
\textnormal{I}\le\mu t^{\frac{1}{2}+\frac{1}{2p}}\Vert\omega_{\mu}\Vert^{\frac{1}{2}+\frac{1}{2p}}_{\widetilde{L}_{t}^{\infty} B^{\frac1p}_{p,\infty}}.
\end{equation*}
Put $r=\infty, s=\frac1p, p_1=p, p_2=\infty$, Proposition \ref{Persistance} tell us
\begin{equation*}\label{Ayat3}
\Vert\omega_{\mu}\Vert_{\widetilde{L}_t^\infty B^{\frac1p}_{p,\infty}}\le Ce^{CV_{\mu}(t)}\Big(\Vert\omega^0_{\mu}\Vert_{B^{\frac1p}_{p,\infty}}+\Vert\nabla\rho_{\mu}\Vert_{L^1_t B^{\frac1p}_{p,\infty}}\Big).
\end{equation*}
Hence
\begin{equation*}
\textnormal{I}\le Ce^{CV_{\mu}(t)}\mu t^{\frac{1}{2}+\frac{1}{2p}}\Big(\Vert\omega^0_{\mu}\Vert_{B^{\frac1p}_{p,\infty}}+\Vert\nabla\rho_{\mu}\Vert_{L^1_t B^{\frac1p}_{p,\infty}}\Big)^{\frac{1}{2}+\frac{1}{2p}}.
\end{equation*}
Concerning II, a new use of the Proposition \ref{Persistance} gives for $r=1, s=\frac1p, p_1=p, p_2=\infty$ the following
\begin{equation*}
\Vert\omega_{\mu}\Vert_{\tilde L_t^1 B^{2+\frac1p}_{p,\infty}}\le Ce^{CV_{\mu}(t)}\mu^{-1}(1+\mu t)\Big(\Vert\omega^0_{\mu}\Vert_{B^{\frac1p}_{p,\infty}}+\Vert\nabla\rho_{\mu}\Vert_{L^1_t B^{\frac1p}_{p,\infty}}\Big).
\end{equation*}
Accordingly, we infer
\begin{equation*}\label{Ayat4}
\textnormal{II}\le Ce^{CV_{\mu}(t)}\mu^{\frac{1}{2p}-\frac{1}{2}}(1+\mu t)^{\frac{1}{2}-\frac{1}{2p}}\Big(\Vert\omega^0_{\mu}\Vert_{B^{\frac1p}_{p,\infty}}+\Vert\nabla\rho_{\mu}\Vert_{L^1_t B^{\frac1p}_{p,\infty}}\Big)^{\frac{1}{2}-\frac{1}{2p}}.
\end{equation*}
Combining I and II, \eqref{Ayat2} becomes
\begin{equation*}
\mu\Vert\Delta v_{\mu}\Vert_{L_t^1 L^p}\lesssim Ce^{CV_{\mu}(t)}(\mu t)^{\frac{1}{2}+\frac{1}{2p}}(1+\mu t)^{\frac{1}{2}-\frac{1}{2p}}\Big(\Vert\omega^0_{\mu}\Vert_{B^{\frac1p}_{p,\infty}}+\Vert\nabla\rho_{\mu}\Vert_{L^1_t B^{\frac1p}_{p,\infty}}\Big).
\end{equation*}
By means of the embedding $\widetilde L^1_t B^{\frac1p}_{p,1}= L^1_t B^{\frac1p}_{p,1}\hookrightarrow L^1_t B^{\frac1p}_{p,\infty}$, the last estimate becomes
\begin{equation*}\label{Ayat5}
\mu\Vert\Delta v_{\mu}\Vert_{L_t^1 L^p}\lesssim Ce^{CV_{\mu}(t)}(\mu t)^{\frac{1}{2}+\frac{1}{2p}}(1+\mu t)^{\frac{1}{2}-\frac{1}{2p}}\Big(\Vert\omega^0_{\mu}\Vert_{B^{\frac1p}_{p,\infty}}+\Vert\nabla\rho_{\mu}\Vert_{\widetilde L^1_t B^{\frac1p}_{p,1}}\Big)
\end{equation*}
together with \eqref{Rate00}, one obtains
\begin{eqnarray*}\label{Rate0022}
\nonumber\Pi(t)&\lesssim& Ce^{C(t+V_{\mu}(t)+V(t)+\Vert\nabla\rho\Vert_{L^1_t L^\infty})}\bigg(\Pi(0)+(\mu t)^{\frac{1}{2}+\frac{1}{2p}}(1+\mu t)^{\frac{1}{2}-\frac{1}{2p}}\Big(\Vert\omega^0_{\mu}\Vert_{B^{\frac1p}_{p,\infty}}+\Vert\nabla\rho_{\mu}\Vert_{\widetilde L^1_t B^{\frac1p}_{p,1}}\Big)\bigg).
\end{eqnarray*}
For the term $\Vert\nabla\rho\Vert_{L^1_t L^\infty}$, applying the Propositon \ref{propasyy1} the last estimate takes the form
\begin{equation}\label{Rate0022}
\Pi(t)\lesssim Ce^{C(t+V_{\mu}(t)+V(t))}\bigg(\Pi(0)+(\mu t)^{\frac{1}{2}+\frac{1}{2p}}(1+\mu t)^{\frac{1}{2}-\frac{1}{2p}}\Big(\Vert\omega^0_{\mu}\Vert_{B^{\frac1p}_{p,\infty}}+\Vert\nabla\rho_{\mu}\Vert_{\widetilde L^1_t B^{\frac1p}_{p,1}}\Big)\bigg).
\end{equation}

To end the proof of our claim, let us estimate $\Vert\nabla\rho_{\mu}\Vert_{\widetilde L^1_t B^{\frac1p}_{p,1}}$. Note that $\nabla$ maps continuously $B^{1+\frac1p}_{p,1}$ into $B^{\frac1p}_{p,1}$, then the Proposition \ref{Persistance} combined with $L^p\hookrightarrow B^{\frac1p-1}_{p,1}$  gives for $p>1$
\begin{eqnarray}\label{Toto1}
\Vert\rho_{\mu}\Vert_{\widetilde L_t^1 B_{p,1}^{\frac1p+1}}&\le & Ce^{CV_\mu(t)}(1+t)\|\rho^0_\mu\|_{B_{p,1}^{\frac{1}{p}-1}}\\
\nonumber&\le & Ce^{CV_\mu(t)}(1+t)\|\rho^0_\mu\|_{L^p}.
\end{eqnarray}
Plugging \eqref{Toto1} in \eqref{Rate0022}, we find that
\begin{equation}\label{Rate0022222}
\Pi(t)\lesssim Ce^{C(t+V_{\mu}(t)+V(t))}\bigg(\Pi(0)+(\mu t)^{\frac{1}{2}+\frac{1}{2p}}(1+\mu t)^{\frac{1}{2}-\frac{1}{2p}}\Big(\Vert\omega^0_{\mu}\Vert_{B^{\frac1p}_{p,\infty}}+\Vert\rho_{\mu}^0\Vert_{L^p}\Big)\bigg).
\end{equation}
Hence the proof of the Theorem \ref{rateconvergence} is accomplished.
\end{proof}

\subsection{Proof of Theorem \ref{rate}} 
{\bf (i)} Substituting \eqref{H} and \eqref{M} into \eqref{Rate0022222} and the fact ${\bf 1}_{\Omega_0}\in B^{\frac1p}_{p,\infty}$, it happens for $\mu\in]0,1[$ 
\begin{equation*}\label{Rate-Convergence}
\Pi(t)\lesssim C_0e^{e ^{C_0 t\log^2(1+t)}}(\mu t)^{\frac{1}{2}+\frac{1}{2p}}.
\end{equation*}

{\bf (ii)} To estimate $\omega_{\mu}-\omega$ in $L^p-$norm, using the definition of $\omega_\mu$ and $\omega$ we shall have
\begin{equation*}
\|\omega_{\mu}(t)-\omega(t)\|_{L^p}\le \|\nabla (v_{\mu}(t)-v(t))\|_{L^p}
\end{equation*}
combined with $B^0_{p,1}\hookrightarrow L^p$ and Bernstein inequality, we readily get
\begin{equation}\label{OAAA}
\|\omega_{\mu}(t)-\omega(t)\|_{L^p}\lesssim \| v_{\mu}(t)-v(t)\|_{B^{1}_{p,1}}.
\end{equation}

On the other hand, let $N$ be a fixed number that will be chosen later. Again Bernstein's inequality leading to
\begin{eqnarray}\label{OAAA2}
\| v_{\mu}(t)-v(t)\|_{B^{1}_{p,1}}&\le&\sum_{q\le N}2^{q}\| \Delta_q (v_{\mu}(t)-v(t))\|_{L^{p}}+\sum_{q>N}2^{-\frac{q}{p}}2^{\frac{q}{p}}\| \Delta_q\nabla (v_{\mu}(t)-v(t))\|_{L^{p}}\\
\nonumber &\lesssim& 2^N \| v_{\mu}(t)-v(t)\|_{L^p}+\sup_{q\ge-1}2^{\frac{q}{p}} \| \nabla(v_{\mu}(t)-v(t))\|_{L^p}\sum_{q>N}2^{-\frac{q}{p}}\\
\nonumber &\lesssim& 2^N \| v_{\mu}(t)-v(t)\|_{L^p}+2^{-\frac{N}{p}} \|\nabla( v_{\mu}(t)-v(t))\|_{B^{\frac1p}_{p,\infty}}.
\end{eqnarray}
Taking
\begin{equation*}
2^{N(1+\frac1p)}\approx\frac{\|\nabla( v_{\mu}(t)-v(t))\|_{B^{\frac1p}_{p,\infty}}}{\| v_{\mu}(t)-v(t)\|_{L^p}}.
\end{equation*}

By means of Cald\'eron-Zygmund inequality of the Proposition \ref{CZyg}, \eqref{OAAA} and \eqref{OAAA2}, it follows
\begin{equation*}
\| v_{\mu}(t)-v(t)\|_{B^{1}_{p,1}}\lesssim \| v_{\mu}(t)-v(t)\|_{L^p}^{\frac{1}{p+1}}\|\omega_{\mu}(t)-\omega(t)\|_{B^{\frac1p}_{p,\infty}},
\end{equation*}
whence 
\begin{equation*}
\|\omega_{\mu}(t)-\omega(t)\|_{L^p}\le \| v_{\mu}(t)-v(t)\|_{L^p}^{\frac{1}{p+1}}\|\omega_{\mu}(t)-\omega(t)\|_{B^{\frac1p}_{p,\infty}}
\end{equation*}
in accordance with the Theorem \ref{rate}, it holds
\begin{equation*}\label{OAAA1}
\|\omega_{\mu}(t)-\omega(t)\|_{L^p}\le C_0 e^{e^{C_0 t\log^{2}(2+t)}}(\mu t)^{\frac{1}{2p}}(1+\mu t)\|\omega_{\mu}(t)-\omega(t)\|_{B^{\frac1p}_{p,\infty}}.
\end{equation*}
To finalize, let us estimate $\|\omega_{\mu}(t)-\omega(t)\|_{B^{\frac1p}_{p,\infty}}$. To do this, using the persistence of Besov spaces explicitly formulated in the Proposition \ref{prop1}, one gets
\begin{eqnarray*}
\|\omega_{\mu}(t)-\omega(t)\|_{B^{\frac1p}_{p,\infty}}&\le& \|\omega_{\mu}(t)\|_{B^{\frac1p}_{p,\infty}}+\|\omega(t)\|_{B^{\frac1p}_{p,\infty}}\\
\nonumber &\le& Ce^{C(V_{\mu}(t)+V(t))}\Big(\|\omega_{\mu}^{0}\|_{B^{\frac1p}_{p,\infty}}+\|\omega^{0}\|_{B^{\frac1p}_{p,\infty}}+\|\nabla\rho_{\mu}\|_{L^1_t B^{\frac1p}_{p,\infty}}+\|\nabla\rho\|_{L^1_t B^{\frac1p}_{p,\infty}}\Big).
\end{eqnarray*}
The last two terms of the right-hand side stem from similar arguments as in \eqref{Toto1}. Then thanks to \eqref{H} and \eqref{M}, we end up with
\begin{equation*}
\|\omega_{\mu}(t)-\omega(t)\|_{B^{\frac1p}_{p,\infty}}\le C_{0}e^{e^{C_0t\log^2(2+t)}}(\mu t)^{\frac{1}{2p}}(1+\mu t).
\end{equation*}
This achieves the proof of the aimed estimate.   

\subsection{Optimality of the rate of convergence}\label{optimality} In this paragraph we shall give the  proof of Theorem \ref{optimality0} by  showing that $(\mu t)^{\frac{1}{2p}}$ is optimal  in $L^p$ norm in the case of a circular vortex patch and $\rho_{\mu}^0$ and $\rho^0$ are constant densities. 
\begin{proof}[Proof of  Theorem \ref{optimality0}] Since the initial data  $\omega_{\mu}^0=\omega^0={\bf 1}_{\mathbb{D}}$ are radial then this structure is preserved in the evolution and thus
\begin{equation*}
v_\mu\cdot\nabla\omega_\mu,\quad v\cdot\nabla\omega=0.
\end{equation*}
Therefore the equation of $\omega_{\mu}$ (resp. $\omega$) takes the following form
\begin{equation*}
\partial_t\omega_{\mu}-\mu\Delta\omega_{\mu}=0,\quad \partial_t\omega=0.
\end{equation*}
Recall that the solutions of the above equations are given by
\begin{equation}\label{solution}
\omega_{\mu}(t,x)=K_{\mu t}\star \omega^0_{\mu}(x),\quad \omega(t,x)=\omega^0(x),
\end{equation}
where $K_t$ is the heat kernel defined by
\begin{equation*}
K_t(x)\triangleq\frac{1}{4\pi t}e^{-\frac{\vert x\vert^2}{4t}}
\end{equation*}
and satisfies
\begin{equation*}
\int_{\RR^2}K_t(x)dx=1.
\end{equation*} 
On the other hand, setting $W(t,x)=\omega_{\mu}(t,x)-\omega(t,x)$. Then in view of the requirement \eqref{solution}, we have
\begin{equation*}
W(t,x)=\int_{\RR^2}K_{\mu t}(x-y)[{\bf 1}_{\mathbb{D}}(y)-{\bf 1}_{\mathbb{D}}(x)]dy.
\end{equation*}
For $\vert x\vert<1$ we have
\begin{eqnarray*}
W(t,x)&=&\int_{\{\vert y\vert\ge1\}}K_{\mu t}(x-y)dy\\
&=&\frac{1}{4\pi\mu t}\int_{\{\vert y\vert\ge1\}}e^{-\frac{\vert x-y\vert^2}{4\mu t}}dy.
\end{eqnarray*}
Introduce $Z(t,x)=W(t,\sqrt{\mu t}x)$ and make the change of variables $y=\sqrt{\mu t}z$, one gets
\begin{equation}\label{formula}
Z(t,x)=\frac{1}{4\pi}\int_{\{\vert z\vert\ge\frac{1}{\sqrt{\mu t}}\}}e^{-\frac{\vert x-z\vert^2}{4}}dz,\quad\vert x\vert\le\frac{1}{\sqrt{\mu t}}.
\end{equation}
Let $\mu t\le1,$ then
\begin{eqnarray}\label{crucial0}
\|W(t)\|_{L^p(\RR^2)}&\ge&\|W(t)\|_{L^p(1-\sqrt{\mu t}\le\vert x\vert\le 1)}\\
\nonumber&\ge& (\mu t)^{\frac{1}{p}}\|Z(t)\|_{L^p(\frac{1}{\sqrt{\mu t}}-1\le\vert x\vert\le\frac{1}{\sqrt{\mu t}})}.
\end{eqnarray}
Now, our task is to prove the following requirement
\begin{equation}\label{crucial}
\|Z(t)\|_{L^p(\frac{1}{\sqrt{\mu t}}-1\le \vert x\vert\le\frac{1}{\sqrt{\mu t}})}\ge C_2(\mu t)^{-\frac{1}{2p}}.
\end{equation}
For this purpose, we plug the identity $\vert x-z\vert^2\triangleq\vert x\vert^2+\vert z\vert^2-2\langle x,z\rangle$ into \eqref{formula}, 
\begin{equation*}
Z(t,x)=\frac{1}{4\pi}e^{-\frac{\vert x\vert^2}{4}}\int_{\{\vert z\vert\ge\frac{1}{\sqrt{\mu t}}\}}e^{-\frac{\vert z\vert^2}{4}+\frac12\langle x,z\rangle}dz.
\end{equation*}
By rotation invariance, the above equation becomes
\begin{eqnarray*}
Z(t,x)&=&\frac{1}{4\pi}e^{-\frac{\vert x\vert^2}{4}}\int_{0}^{2\pi}\int_{\frac{1}{\sqrt{\mu t}}}^{+\infty}e^{-\frac{r^2}{4}+\frac12 r\vert x\vert\cos\theta}rdrd\theta\\
\nonumber&\ge&\frac{1}{4\pi}e^{-\frac{\vert x\vert^2}{4}}\int_{0}^{\frac{\pi}{2}}\int_{\frac{1}{\sqrt{\mu t}}}^{+\infty}e^{-\frac{r^2}{4}+\frac12 r\vert x\vert\cos\theta}rdrd\theta.
\end{eqnarray*}
Since $\cos\theta\ge 1-\frac{\theta^2}{2}$ for $\theta\ge0$, then we find
\begin{eqnarray*}
\vert Z(t,x)\vert &\ge& \frac{1}{4\pi}\int_{\frac{1}{\sqrt{\mu t}}}^{+\infty}e^{-\frac{\vert x\vert^2}{4}-\frac{r^2}{4}+\frac{r\vert x\vert}{2}}\bigg(\int_{0}^{\frac{\pi}{2}}e^{-\frac{1}{4}r\vert x\vert\theta^2}d\theta\bigg) rdr\\
&=&\frac{1}{4\pi}\int_{\frac{1}{\sqrt{\mu t}}}^{+\infty}e^{-\frac14(\vert x\vert-r)^2}\bigg(\int_{0}^{\frac{\pi}{2}}e^{-\frac{1}{4}r\vert x\vert\theta^2}d\theta\bigg) rdr.
\end{eqnarray*}
Here, we have used Fubini's theorem. For the second integral of the right-hand side, using the change of variables $\alpha=\frac12\sqrt{r \vert x\vert}\theta$, we get

\begin{equation}\label{formula30}
\vert Z(t,x)\vert \ge \frac{1}{2\pi}\int_{\frac{1}{\sqrt{\mu t}}}^{\frac{2}{\sqrt{\mu t}}}e^{-\frac14(\vert x\vert-r)^2}\Bigg(\int_{0}^{\sqrt{r\vert x\vert}\frac{\pi}{4}}e^{-\alpha^2}\frac{d\alpha}{\sqrt{r\vert x\vert}}\Bigg) rdr.
\end{equation}
Since $r\vert x\vert\ge\frac{1}{\sqrt{\mu t}}\Big(\frac{1}{\sqrt{\mu t}}-1\Big)\approx\frac{1}{\mu t}\ge 1$, then we obtain that
$$
\int_{0}^{\sqrt{r\vert x\vert}\frac{\pi}{4}}e^{-\alpha^2}\frac{d\alpha}{\sqrt{r\vert x\vert}}\ge\int_{0}^{\frac{\pi}{4}}e^{-\alpha^2}d\alpha=c.
$$
Consequently for $\frac{1}{\sqrt{\mu t}}-1\le\vert x\vert\le\frac{1}{\sqrt{\mu t}}$, the formula \eqref{formula30} takes the following form
\begin{equation*}
\vert Z(t,x)\vert \ge C\int_{\frac{1}{\sqrt{\mu t}}}^{\frac{2}{\sqrt{\mu t}}}e^{-\frac14(\vert x\vert-r)^2}\sqrt{\frac{r}{\vert x\vert}}dr.
\end{equation*}
But, $\frac{r}{\vert x\vert}\ge\frac{1}{\sqrt{\mu t}}\sqrt{\mu t}=1$ and hence
\begin{equation*}
\vert Z(t,x)\vert \ge C\int_{\frac{1}{\sqrt{\mu t}}}^{\frac{2}{\sqrt{\mu t}}}e^{-\frac14(\vert x\vert-r)^2}dr.
\end{equation*}
Making the change of variables $k=r-\vert x\vert$, we readily get

\begin{equation*}
\vert Z(t,x)\vert \ge C\int_{\frac{1}{\sqrt{\mu t}}-\vert x\vert}^{\frac{2}{\sqrt{\mu t}}-\vert x\vert}e^{-\frac14 k^2}dk.
\end{equation*}
However, $\frac{1}{\sqrt{\mu t}}-\vert x\vert\le1$ and $\frac{2}{\sqrt{\mu t}}-\vert x\vert\ge\frac{1}{\sqrt{\mu t}}$. This leads to
\begin{equation*}
\vert Z(t,x)\vert \ge C\int_{1}^{\frac{1}{\sqrt{\mu t}}}e^{-\frac14 k^2}dk\ge C>0.
\end{equation*}
Therefore, for $\frac{1}{\sqrt{\mu t}}-1\le \vert x\vert\le\frac{1}{\sqrt{\mu t}}$, it follows
\begin{equation}\label{nn}
\vert Z(t,x)\vert\ge C.
\end{equation}
Taking the $L^p-$norm for \eqref{nn} over the annulus $\frac{1}{\sqrt{\mu t}}-1\le \vert x\vert\le\frac{1}{\sqrt{\mu t}}$, it holds 
\begin{eqnarray*}
\|Z(t)\|_{L^p(\frac{1}{\sqrt{\mu t}}-1\le \vert x\vert\le\frac{1}{\sqrt{\mu t}})}&\ge& C\Bigg[\mathscr{L}\bigg(\frac{1}{\sqrt{\mu t}}-1\le \vert x\vert\le\frac{1}{\sqrt{\mu t}}\bigg)\Bigg]^{\frac1p}\\
\nonumber&\ge& C\bigg[2\pi \bigg(\frac{1}{\sqrt{\mu t}}-1\bigg)\bigg]^{\frac{1}{p}}\\
\nonumber&\ge&\widetilde{C}(\mu t)^{-\frac{1}{2p}},
\end{eqnarray*}
where $\mathscr{L}$  is the Lebesgue measure over $\RR^2$. Hence,

\begin{equation*}
\|Z(t)\|_{L^p(\frac{1}{\sqrt{\mu t}}-1\ge \vert x\vert\le\frac{1}{\sqrt{\mu t}})}\ge C_1(\mu t)^{-\frac{1}{2p}}.
\end{equation*}
This leads to the desired estimate stated in \eqref{crucial}. Combining the last estimate with \eqref{crucial0}, we end up with
\begin{equation*}
\|W(t)\|_{L^p(\RR^2)}\ge C_1 (\mu t)^{\frac{1}{2p}}.
\end{equation*}
Now, the proof is completed.
\end{proof}

\section*{Appendix} This section cares with the detailed proof of two Propositions \ref{An1}, \ref{injec} which are used respectively during the proof of Theorem \ref{theo7} and Proposition \ref{Inj}.
\begin{prop}\label{An1} Let $\varepsilon\in]0, 1[, \rho$ be a smooth function  and $v$ be a smooth divergence-free vector field \mbox{on $\RR^2$} with vorticity $\omega$. Assume that $v\in L^2, \omega\in L^2\cap L^\infty$ and $\rho\in L^2\cap L^p$, with $p>\frac{2}{1-\EE}.$ Then the following statement  holds true,   
$$
\big\Vert\big[\mathcal{L}, v\cdot\nabla\big]\rho\big\Vert_{C^{\varepsilon}}\le C_0.
$$
\end{prop}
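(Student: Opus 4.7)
The plan is to exploit the divergence-free condition on $v$ to reduce the commutator with $\mathcal L=\partial_1\Delta^{-1}$ (an operator of order $-1$) to a sum of commutators with operators of order $0$, and then attack these by Bony's paraproduct decomposition. Concretely, since $\partial_iv^i=0$ and $\mathcal L$ commutes with $\partial_i$, I would first write
\begin{equation*}
[\mathcal L,v\cdot\nabla]\rho=\mathcal L\,\partial_i(v^i\rho)-v^i\partial_i\mathcal L\rho=\sum_i [R_{1i},v^i]\rho,\qquad R_{1i}\triangleq \partial_1\partial_i\Delta^{-1},
\end{equation*}
so that the burden is transferred onto commutators with the standard Riesz-type operators $R_{1i}$, which are Fourier multipliers of order $0$ with smooth symbols away from the origin.

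Next, I would Bony-decompose $v^i\rho$ and $v^i R_{1i}\rho$, obtaining the three standard pieces
\begin{equation*}
[R_{1i},v^i]\rho=[R_{1i},T_{v^i}]\rho+\bigl(R_{1i}T_{\rho}v^i-T_{R_{1i}\rho}v^i\bigr)+\bigl(R_{1i}\mathscr R(v^i,\rho)-\mathscr R(v^i,R_{1i}\rho)\bigr)\triangleq\mathrm{I}+\mathrm{II}+\mathrm{III}.
\end{equation*}
For $\mathrm{I}$, I would invoke the classical paraproduct-commutator estimate that, thanks to the spectral localization and the smoothness of the symbol of $R_{1i}$, yields a gain of one derivative: $\|[R_{1i},T_{v^i}]\rho\|_{C^{\varepsilon}}\lesssim \|\nabla v\|_{L^{\infty}}\|\rho\|_{C^{\varepsilon-1}}$. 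The factor $\|\nabla v\|_{L^{\infty}}$ is controlled by the logarithmic estimate of Theorem \ref{ttt} and $\omega\in L^{2}\cap L^{\infty}$, while the hypothesis $p>\tfrac{2}{1-\varepsilon}$ together with the Bernstein-type embedding $L^{p}\hookrightarrow B^{-2/p}_{\infty,\infty}\hookrightarrow C^{\varepsilon-1}$ absorbs the low regularity of $\rho$.

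For $\mathrm{II}$, I would use the continuity of $R_{1i}$ on $C^{\varepsilon}$ (since $\varepsilon\in(0,1)$) and the standard paraproduct bounds: $\|T_{f}g\|_{C^{\varepsilon}}\lesssim \|f\|_{C^{\varepsilon-1}}\|g\|_{C^{1}}$ or $\|T_{f}g\|_{C^{\varepsilon}}\lesssim \|f\|_{L^{\infty}}\|g\|_{C^{\varepsilon}}$ as suitable, applied with $f\in\{\rho,R_{1i}\rho\}$ (which both lie in $C^{\varepsilon-1}$ by the above embedding and the $L^{p}$-boundedness of $R_{1i}$) and with $v^{i}$ in the appropriate Hölder class obtained from $\omega\in L^{2}\cap L^{\infty}$ via Calderón--Zygmund. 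The two terms are treated on the same footing. For $\mathrm{III}$, I would use that the remainder satisfies $\|\mathscr R(f,g)\|_{C^{a+b}}\lesssim\|f\|_{C^{a}}\|g\|_{C^{b}}$ whenever $a+b>0$; choosing $a$ close to $1$ (regularity of $v^{i}$) and $b=\varepsilon-1<0$ (regularity of $\rho$ and of $R_{1i}\rho$) produces a remainder in $C^{\varepsilon}$, controlled by the same combination of norms.

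The main obstacle I expect is the mismatch between the low regularity of $\rho$ (only $L^{2}\cap L^{p}$) and the target space $C^{\varepsilon}$: at first sight the commutator seems to lose one derivative on $\rho$, but the whole point is that the smoothing by $\mathcal L$ and the cancellation encoded in the paraproduct-commutator identity recover that derivative. The precise calibration $p>\tfrac{2}{1-\varepsilon}$ is exactly what is needed so that $L^{p}\hookrightarrow C^{\varepsilon-1}$ is available, closing each of the three estimates by a constant depending only on $\|\omega\|_{L^{2}\cap L^{\infty}}$ and $\|\rho\|_{L^{2}\cap L^{p}}$, hence by $C_{0}$ via the energy estimate for $\omega$ and the conservation $\|\rho(t)\|_{L^{p}}\le\|\rho^{0}\|_{L^{p}}$.
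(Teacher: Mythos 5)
Your reduction $[\mathcal L,v\cdot\nabla]\rho=\sum_i[R_{1i},v^i]\rho$ with $R_{1i}=\partial_1\partial_i\Delta^{-1}$ and the subsequent Bony splitting are correct and are indeed the mechanism behind the estimate; the paper itself does not redo this work but simply quotes from \cite{HZ-poche} the inequality $\|[\mathcal L,v\cdot\nabla]\rho\|_{C^{\varepsilon}}\lesssim\|v\|_{L^2}\|\rho\|_{L^2}+\|\omega\|_{L^2\cap L^{\infty}}\|\rho\|_{L^p}$ and then devotes its proof to bounding that right-hand side along the flow. Your write-up has a genuine gap precisely where it departs from this: in term $\mathrm{I}$ you invoke $\|[R_{1i},T_{v^i}]\rho\|_{C^{\varepsilon}}\lesssim\|\nabla v\|_{L^{\infty}}\|\rho\|_{C^{\varepsilon-1}}$ and then claim $\|\nabla v\|_{L^{\infty}}$ is controlled by Theorem \ref{ttt} and $\omega\in L^2\cap L^{\infty}$. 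That is false: the logarithmic estimate requires $\omega\in C^{\varepsilon}(X)$, and $\omega\in L^2\cap L^{\infty}$ alone does not make $v$ Lipschitz. Worse, it would be circular in context, since Proposition \ref{An1} is fed (via \eqref{Amina0}) into the Gronwall argument whose very output is the bound on $V(t)=\int_0^t\|\nabla v\|_{L^{\infty}}$; the whole point of the cited commutator estimate is that its right-hand side contains no Lipschitz norm of $v$. To close term $\mathrm{I}$ correctly you must replace $\|S_{q-1}\nabla v\|_{L^{\infty}}$ by something like $(q+2)\|\omega\|_{L^{\infty}}+\|v\|_{L^2}$ (using $\|\Delta_j\nabla v\|_{L^{\infty}}\lesssim\|\Delta_j\omega\|_{L^{\infty}}$ for $j\ge0$ and $\|v\|_{L^2}$ for $j=-1$), and absorb the resulting logarithmic loss in $q$ using the strict room $-2/p>\varepsilon-1$ afforded by $p>\frac{2}{1-\varepsilon}$. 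This is exactly the delicate step, and it is also where the $\|v\|_{L^2}\|\rho\|_{L^2}$ term of \eqref{Poc3} is generated.

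A second, smaller omission: even granting the commutator inequality, the conclusion $\le C_0$ is not immediate from "energy estimates and conservation of $\|\rho\|_{L^p}$". In the paper $\|v(t)\|_{L^2}$ grows like $(1+t)^{1/2}$, and one needs the decay $\|\rho(t)\|_{L^2}\lesssim(1+t)^{-1/2}\|\rho^0\|_{L^1\cap L^2}$ coming from the diffusion in the density equation to make the product $\|v(t)\|_{L^2}\|\rho(t)\|_{L^2}$ uniformly bounded; similarly $\|\omega(t)\|_{L^{\infty}}$ grows like $\log^2(1+t)$ and is compensated by the decay of $\|\rho(t)\|_{L^p}$. Your proposal does not engage with this time dependence, which is the actual content of the paper's proof of this proposition.
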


\begin{proof}
Recall from \cite{HZ-poche} the following commutator estimate,
\begin{eqnarray}\label{Poc3}
\big\Vert\big[\mathcal{L}, v\cdot\nabla\big]\rho\big\Vert_{C^{\varepsilon}}\lesssim\|v\|_{L^2}\|\rho\|_{L^2}+\Vert\omega\Vert_{L^2\cap L^{\infty}}\Vert\rho\Vert_{L^p},\quad p>\frac{2}{1-\EE}.
\end{eqnarray}
Let us estimate the first term of the right-hand side of \eqref{Poc3}. To do this, we apply the energy estimate for the velocity equation, we shall have
\begin{equation*}
\|v(t)\|_{L^2}\le \|v_0\|_{L^2}+\int_0^t\|\rho(\tau)\|_{L^2}d\tau.
\end{equation*}
A new use of \cite{HZ-poche} gives
\begin{equation*}
(1+t)^{\frac12}\|\rho(t)\|_{L^2}\lesssim\|\rho_0\|_{L^1\cap L^2}
\end{equation*}
thus we obtain
\begin{equation*}
\|v(t)\|_{L^2}\le C_0(1+t)^{\frac12}.
\end{equation*}
Combining the last two estimates, we readily get
\begin{equation}\label{Amina1}
\|v(t)\|_{L^2}\|\rho(t)\|_{L^2}\le C_0.
\end{equation}
A usual interpolation inequality between the Lebesgue spaces yields for $p\in[2,+\infty[$
\begin{eqnarray}\label{Amina2}
\|\rho(t)\|_{L^p}&\le& \|\rho(t)\|_{L^2}^{\frac2p}\|\rho_0\|_{L^\infty}^{1-\frac2p}\\
\nonumber&\le& C_0(1+t)^{-\frac1p}.
\end{eqnarray}
Here we have used the maximum principle for the density equation. Putting together \eqref{Amina1}, \eqref{Amina2} and Proposition \ref{propasyy1}, we finally get
\begin{eqnarray*}
\big\|\big[\mathcal{L},v\cdot\nabla  \big]\rho\big\|_{C^\EE}&\le& C_0+C_0(1+t)^{-\frac1p}\log^{2}(2+t)\\
&\leq& C_0.
\end{eqnarray*}
This completes the proof.
\end{proof}
For the reader's convenience we state the following classical result.
\begin{prop}\label{injec} The following Sobolev embedding is hold.
\begin{equation*}
BV\hookrightarrow \dot B^{1}_{1,\infty}.
\end{equation*}
\end{prop}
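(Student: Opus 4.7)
The plan is to verify by direct Fourier-analytic estimation that
\[
\sup_{q\in\ZZ} 2^{q}\|\dot\Delta_q u\|_{L^1}\;\lesssim\;\|u\|_{BV},
\]
which is exactly the condition $u\in\dot B^{1}_{1,\infty}$. The mechanism is a single cancellation: the Littlewood--Paley multiplier $\varphi$ vanishes near the origin (it is supported in the annulus $\mathcal{C}$), so one derivative can be algebraically extracted from $\dot\Delta_q$ and transferred onto $u$, whose distributional gradient is a finite Radon measure when $u\in BV$.

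Concretely, I would first factor the symbol as
\[
\varphi(\xi)\;=\;\sum_{j=1}^{2} i\xi_j\,\psi_j(\xi),\qquad \psi_j(\xi)\;\triangleq\;\frac{-i\xi_j}{|\xi|^{2}}\,\varphi(\xi),
\]
each $\psi_j$ being smooth and compactly supported in $\mathcal{C}$ (so in particular away from $\xi=0$). Setting $\Psi_j\triangleq\mathcal{F}^{-1}\psi_j\in\mathscr{S}(\RR^2)$ and $\Psi_{j,q}(x)\triangleq 2^{2q}\Psi_j(2^q x)$, a direct computation on the Fourier side gives the dyadic identity
\[
\dot\Delta_q u\;=\;2^{-q}\sum_{j=1}^{2}\Psi_{j,q}\star\partial_j u,
\]
valid for any tempered distribution $u$. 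This is the central observation: the factor $2^{-q}$ in front is exactly the gain that produces the weight $2^q$ in the $\dot B^{1}_{1,\infty}$ norm.

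The remaining step is just Young's inequality in its measure-theoretic form. Since $u\in BV$, the distribution $\partial_j u$ is a finite Radon measure with $|\partial_j u|(\RR^2)\le|Du|(\RR^2)$, and for a Schwartz function $f$ one has $\|f\star\nu\|_{L^1}\le\|f\|_{L^1}\,|\nu|(\RR^2)$. Combined with the scale invariance $\|\Psi_{j,q}\|_{L^1}=\|\Psi_j\|_{L^1}$, this yields
\[
2^{q}\|\dot\Delta_q u\|_{L^1}\;\le\;\sum_{j=1}^{2}\|\Psi_j\|_{L^1}\,|\partial_j u|(\RR^2)\;\lesssim\;\|u\|_{BV},
\]
uniformly in $q\in\ZZ$; taking the supremum delivers the stated embedding. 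There is no serious obstacle here: the only mildly technical point is interpreting the convolution of the Schwartz function $\Psi_{j,q}$ with the vector-valued Radon measure $Du$, but this is standard and causes no difficulty.
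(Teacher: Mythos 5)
Your proof is correct. The key identity
\[
\dot\Delta_q u=2^{-q}\sum_{j=1}^{2}\Psi_{j,q}\star\partial_j u,\qquad \psi_j(\xi)=\frac{-i\xi_j}{|\xi|^2}\varphi(\xi),
\]
is a legitimate Fourier-multiplier factorization (the $\psi_j$ are smooth with support in the annulus, so the $\Psi_j$ are Schwartz), and Young's inequality for the convolution of an $L^1$ kernel with the finite Radon measure $\partial_j u$ gives the uniform bound $2^{q}\|\dot\Delta_q u\|_{L^1}\lesssim|Du|(\RR^2)$. This is a genuinely different implementation from the paper's, although both arguments ultimately exploit the same cancellation $\varphi(0)=0$. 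The paper instead uses the zero-mean property of the kernel $\mathscr{F}^{-1}\varphi$ to rewrite the dyadic block as an average of finite differences, $\dot\Delta_q u(x)=\int\mathscr{F}^{-1}\varphi(z)\,\big(u(x-2^{-q}z)-u(x)\big)\,dz$, and then invokes the classical $BV$ translation estimate $\|u(\cdot+h)-u\|_{L^1}\le|h|\,|Du|(\RR^2)$ (quoted from Leoni), which after integration in $z$ produces the same factor $2^{-q}$. Your route trades that translation lemma for a symbol factorization plus Young's inequality for measures: it is slightly more self-contained on the real-variable side (no finite-difference characterization needed) at the cost of a small amount of multiplier algebra, and it generalizes immediately to $BV(\RR^N)\hookrightarrow\dot B^1_{1,\infty}(\RR^N)$ in any dimension, exactly as the paper's argument does. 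Either proof is acceptable.
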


\begin{proof} According to \cite{GL,Tr} the equivalent norm to $\dot B^{s}_{p,r}$ is defined for $\ell\in\NN^*,\;0<s<\ell$ and $(p,r)\in[1,\infty]^2$ by
\begin{equation*}
\||u\||_{\dot B^{s}_{p,r}}\triangleq \Bigg(\int_{\RR^N}\vert h\vert^{-sr}\Vert{\bf \Delta}^{\ell}_h f(x)\Vert_{L^p}^r\frac{dh}{\vert h\vert^{N}}\Bigg)^{\frac{1}{r}}.
\end{equation*}
Here the difference operators ${\bf \Delta}^{\ell}_h$ are given by
\begin{equation*}
{\bf \Delta}^1_h={\bf \Delta}_h,\quad {\bf \Delta}_h^{\ell+1}={\bf \Delta}_h\circ{\bf \Delta}^{\ell}_h\quad\forall \ell\in\NN^*,
\end{equation*} 
where ${\bf \Delta}_h$ is defined for every $u\in\mathcal{S}'(\RR^N)$ and $h\in\RR^N$ by $${\bf \Delta}_h u(x)\triangleq u(x+h)-u(x).$$ 
From \eqref{Hom}, we have for $q\in\ZZ$ and $x\in\RR^2$
\begin{equation*}
\dot\Delta_q u(x)=2^{2q}\int_{\RR^2}\mathscr{F}^{-1}\varphi(2^q(x-y))u(y)dy,
\end{equation*}
with $\mathscr{F}^{-1}\varphi$ denotes the inverse Fourier of $\varphi$. As $\varphi(0)=0$ then 
\begin{equation*}
\dot\Delta_q u(x)=2^{2q}\int_{\RR^2}\mathscr{F}^{-1}\varphi(2^q(x-y))\big(u(y)-u(x)\big)dy.
\end{equation*}
 
So, by making a change of variable $z=2^d(x-y)$, we obtain
\begin{eqnarray*}
\dot\Delta_q u(x)&=&2^{2q}\int_{\RR^2}\mathscr{F}^{-1}\varphi(2^q(x-y))(u(y)-u(x))dy\\
\nonumber&=&\int_{\RR^2}\mathscr{F}^{-1}\varphi(z)(u(x-2^{-q}z)-u(x))dz\\
\nonumber&=&\int_{\RR^2}\mathscr{F}^{-1}\varphi(z){\bf \Delta}_{h}u(x)dz,\quad h=-2^{-q}z.
\end{eqnarray*}
Fubini's theorem implies 
\begin{equation}\label{Ok}
\| \dot\Delta_q u\|_{L^1}\le \int_{\RR^2}\mathscr{F}^{-1}\varphi(z)\|{\bf \Delta}_{h}u\|_{L^1}dz.
\end{equation}

We recall from Theorem 13.48 page 415 in \cite{GL}  the following result
\begin{eqnarray*}
\|{\bf \Delta}_{h} u\|_{L^1}&\le &\vert h\vert \vert Du\vert(\RR^2)\\
&=& 2^{-q}\vert z\vert \vert Du\vert(\RR^2).
\end{eqnarray*}
Consequently 
\begin{equation*}
\|{\bf \Delta}_{h} u\|_{L^1}\le 2^{-q}\vert z\vert \Vert u\Vert_{BV}.
\end{equation*}

Inserting the last estimate in \eqref{Ok}, we get for $q\in\ZZ$
\begin{equation*}
\|\dot\Delta_q u(x)\|_{L^1}\le 2^{-q}\|u\|_{BV}\int_{\RR^2}\mathscr{F}^{-1}\varphi(z)\vert z\vert dz.
\end{equation*}
By taking the supremum over $q\in\ZZ$, we obtain the aimed estimate.
\end{proof}
\section*{Acknowledgements}

The authors wish to thank Taoufik  Hmidi from the University of Rennes 1 for the fruitful discussions on the subject during his visit to the University of Batna 2  in May 2016.

\end{document}